\tikzstyle{green}=[text={black!30!green}]
\tikzstyle{blue}=[text=blue]
\tikzstyle{red}=[text=red]
\tikzstyle{puncture}=[fill=white, draw=red, shape=circle, minimum size=1pt]
\tikzstyle{blackpuncture}=[fill=white, draw=black, shape=circle, minimum size=1pt]
\tikzstyle{cyan}=[text=cyan]
\tikzstyle{markings}=[-, draw=red, line width=1pt, line cap=round]
\tikzstyle{overbraid}=[-, draw=white, fill=none, line width=6pt]
\tikzstyle{thick}=[-, line width=2pt, draw=blue]
\tikzstyle{dashedline}=[-, dashed]
\tikzstyle{dottedline}=[-, dash pattern=on 0.75pt off 0.75pt, line width=0.75pt]
\tikzstyle{thin red}=[-, line width=0.25pt, draw=black]
\tikzstyle{tangle}=[-, draw=blue, line width=1pt, fill={blue!20}]
\tikzstyle{scc}=[-, draw={black!30!green}, fill={blue!20}, line width=1pt]
\tikzstyle{inner boundary}=[-, fill=white]
\tikzstyle{outer boundary}=[-, fill={red!20}]
\tikzstyle{lowerboundery}=[-, line width=1.5pt, line cap=round, draw=red]
\tikzstyle{upperboundery}=[-, line width=1.5pt, line cap=round, draw=blue]
\tikzstyle{dottedcycle}=[-, draw=blue, dash pattern=on 0.5pt off 1pt on 4pt off 1pt, decoration={markings, mark=at position 0.5 with {\arrow{>}}}, postaction=decorate]
\tikzstyle{cycle}=[-, draw=blue, decoration={markings, mark=at position 0.5 with {\arrow{>}}}, postaction=decorate]
\tikzstyle{path}=[-, draw=cyan, line width=0.25pt]
\tikzstyle{arrowpath}=[-, draw=cyan, line width=0.25pt, decoration={markings, mark=at position 0.5 with {\arrow{>}}}, postaction=decorate]
\tikzstyle{orientedpath}=[-, line width=0.25pt, decoration={markings, mark=at position 0.5 with {\arrow{<}}}, postaction=decorate]
\tikzstyle{inner square}=[-, fill={blue!20}]
\tikzstyle{outer square}=[-, fill={red!20}]
\tikzstyle{blueline}=[-, draw=blue]
\tikzstyle{greenline}=[-, draw=green]
\tikzstyle{bluesquare}=[-, draw=blue, fill={blue!20}]
\numberwithin{figure}{section}
\newtheorem{theorem}{Theorem}[section]
\newtheorem*{theorem*}{Theorem}
\newtheorem{lemma}[theorem]{Lemma}
\newtheorem*{lemma*}{Lemma}
\newtheorem{proposition}[theorem]{Proposition}
\newtheorem*{proposition*}{Proposition}
\newtheorem{conjecture}[theorem]{Conjecture}
\newtheorem{corollary}[theorem]{Corollary}
\newtheorem*{corollary*}{Corollary}
\theoremstyle{remark}
\newtheorem{remark}[theorem]{Remark}
\newtheorem{notation}[theorem]{Notation}
\newtheorem{terminology}[theorem]{Terminology}
\newtheorem*{question*}{Question}
\theoremstyle{definition}
\newtheorem{definition}[theorem]{Definition}
\newtheorem{propositiondefinition}[theorem]{Proposition-Definition}
\newtheorem*{definition*}{Definition}
\newcommand{\Cat}{\mathcal{C}}
\newcommand{\cE}{\mathcal{E}}
\newcommand{\ocE}{\overline{\mathcal{E}}}
\newcommand{\cO}{\mathcal{O}}
\newcommand{\cI}{\mathcal{I}}
\newcommand{\cM}{\mathcal{M}}
\newcommand{\Kbar}{\overline{K}}
\newcommand{\VL}{\mathrm{Vect}_\Lambda}
\newcommand{\VLone}{\mathrm{Vect}_{\Lambda_1}}
\newcommand{\VLtwo}{\mathrm{Vect}_{\Lambda_2}}
\newcommand{\CN}{\mathrm{C}_\Nu}
\newcommand{\GMod}{\mathfrak{gmod}}
\newcommand{\BraidFun}{\mathfrak{braidfun}}
\newcommand{\Braid}{\mathfrak{Braid}}
\newcommand{\Rib}{\mathfrak{Rib}}
\newcommand{\Modcat}{\mathfrak{Mod}}
\newcommand{\lra}{\longrightarrow}
\newcommand{\ra}{\rightarrow}
\newcommand{\Nu}{\mathcal{V}}
\newcommand{\PMod}[1]{\mathrm{PMod}(#1)}
\newcommand{\tPMod}[1]{\mathrm{PM\tilde{o}d}(#1)}
\newcommand{\PB}{\mathrm{PB}}
\protected\def\myphantom#1{\vphantom{#1}}
\newcommand{\mysecondleftidx}[3]{{\myphantom{#2}}#1#2#3}
\newcommand{\Mrpo}[2]{\mysecondleftidx{^1}{\overline{\mathcal{M}}}{_{0,{#1}}}(#2)}
\newcommand{\otMgb}[2]{\widetilde{\overline{\mathcal{M}}}_{#1}^{#2}}
\newcommand{\bbV}{\mathbb{V}}
\newcommand{\Mg}[2]{\mathcal{M}_{{#1},{#2}}}
\newcommand{\Mgb}[2]{\mathcal{M}_{#1}^{#2}}
\newcommand{\tMgb}[2]{\widetilde{\mathcal{M}}_{#1}^{#2}}
\newcommand{\oMg}[2]{\overline{\mathcal{M}}_{{#1},{#2}}}
\newcommand{\Mgrb}[3]{\overline{\mathcal{M}}_{#1}^{#2}(#3)}
\newcommand{\Mgrbt}[4]{\overline{\mathcal{M}}_{#1}^{#2}(#3,#4)}
\newcommand{\Conf}[2]{\mathrm{Conf}_{#1}(#2)}
\newcommand{\Br}[1]{\mathrm{B}\mu_{#1}}
\newcommand{\End}[1]{\mathrm{End}(#1)}
\newcommand{\myim}[1]{\mathrm{im}\left({#1}\right)}
\newcommand{\Aut}[1]{\mathrm{Aut}(#1)}
\newcommand{\Z}{\mathbb{Z}}
\newcommand{\N}{\mathbb{N}}
\newcommand{\C}{\mathbb{C}}
\newcommand{\Q}{\mathbb{Q}}
\newcommand{\id}{\mathrm{id}}
\newcommand{\ul}{\underline{\lambda}}
\newcommand{\ua}{\underline{a}}
\newcommand{\RS}[2]{\mathcal{RS}({#1},{#2})}
\newcommand{\RSo}[1]{\mathcal{RS}({#1})}
\newcommand{\A}{\mathbb{A}}
\newcommand{\Sp}{\mathrm{Sp}}
\DeclareMathOperator{\Gal}{Gal}
\DeclareMathOperator{\Spec}{Spec}
\title{Conformal blocks are quasi-geometric}
\author{Pierre Godfard}
\begin{document}

\begin{abstract}
  We prove that the bundles with flat connections on configuration spaces associated to braided fusion categories,
  as well as the bundles with flat connections on moduli spaces of curves (conformal blocks) associated to modular fusion categories,
  are defined over number fields. The proof relies on Ocneanu rigidity. This result answers a conjecture of Etingof and Varchenko.
  Furthermore, we show that for a fixed braided or modular category, all the associated bundles with flat connections and their compatibilities
  (i.e., the braided or modular functor) can be defined over the same number field.
\end{abstract}

\maketitle

%-------------------------------------------------------------------------------------------------------------------------

\section{Introduction}

%-------------------------------------------------------------------------------------------------------------------------

In \cite[4.26]{etingofPeriodicQuasimotivicPencils2023}, Etingof and Varchenko conjectured that bundles with flat connections
associated to braided fusion and modular fusion categories are defined over $\overline{\Q}$. This is expected as the conjecture
is known in a large class of examples (for instance those constructed from affine Lie algebras), and that
the monodromy of these bundles is already known to be defined over $\overline{\Q}$ in the general case.

In this paper, we provide a proof of their conjecture based on Ocneanu rigidity.

%-------------------------------------------------------------------------------------------------------------------------

\subsection{Bundles with flat connection associated to modular, ribbon and braided fusion categories}

%-------------------------------------------------------------------------------------------------------------------------

A braided fusion category $\Cat$ gives rise to a family of complex finite dimensional pure braid group representations.
More precisely, for each choice of $n\geq 1$ and objects $y,x_1,\dotsc,x_n$ in $\Cat$, there is a representation:
\begin{equation*}
  \PB_n\lra \mathrm{GL}\left(\hom_\Cat(y,x_1\otimes\dotsb\otimes x_n)\right)
\end{equation*}
defined using the associator and the braiding in $\Cat$. Moreover, these representations have some compatibilities.
For instance, given objects $y,z,x_1,\dotsc,x_n,w_1,\dotsc,w_m$ in $\Cat$, the partial composition map:
\begin{multline*}
  \hom_\Cat(y,z\otimes x_1\otimes\dotsb\otimes x_n)\otimes\hom_\Cat(z,w_1\otimes\dotsb\otimes w_m)\\
  \lra \hom_\Cat(y,w_1\otimes\dotsb\otimes w_m\otimes x_1\otimes\dotsb\otimes x_n)
\end{multline*}
is equivariant for the action of $\PB_{1+n}\times \PB_m\subset \PB_{m+n}$, where the inclusion of groups is obtained by splitting the $1^\mathrm{st}$
strand of $\PB_{1+n}$ into $m$ strands.
These pure braid group representations and their compatibilities fit into a structure called topological braided functor (\Cref{braidedfunctor}).

As the pure braid group $\PB_n$ is the fundamental group of the ordered configuration space%
\footnote{The quotient is by translations and does not change the fundamental group. We choose to work with the quotients
to simplify statements about compatibilities.} $\Conf{n}{\C}/\C$,
by the Riemann-Hilbert correspondence, the representation on $\hom_\Cat(y,x_1\otimes\dotsb\otimes x_n)$ gives rise to
a regular-singular algebraic bundle with flat connection $(\Nu(y;x_1,\dotsc,x_n),\nabla)$ over $\Conf{n}{\C}/\C$.
These bundles and their generalizations to higher genus (see below) are called \emph{conformal blocks}.
The compatibilities mentioned above then have algebraic interpretations, and fit into the structure of a geometric braided functor (\Cref{braidedfunctor}).

Because the representation of $\PB_n$ on $\hom_\Cat(y,x_1\otimes\dotsb\otimes x_n)$ factors through
$\PB_n/\{T^r\:\mid\: T\text{ Dehn twist}\}$ for some $r$ depending only on $\Cat$, the flat bundles $(\Nu(y;x_1,\dotsc,x_n),\nabla)$
can be extended to a suitable stack $\Mrpo{n}{r}$ compactifying $\Conf{n}{\C}/\C$.

Pure braid groups are genus $0$ mapping class groups, and when $\Cat$ is assumed to have a ribbon structure and to satisfy a condition called modularity,
the Reshetikhin-Turaev construction applied to $\Cat$ provides representations of mapping class group of any genus
(\cite{reshetikhinInvariants3manifoldsLink1991, turaevQuantumInvariantsKnots2016}).
More precisely, for each genus $g\geq 0$ and $x_1,\dotsc,x_n$ objects in $\Cat$, one has a representation:
\begin{equation*}
  \tPMod{S_g^n}\lra \mathrm{GL}\left(\hom_\Cat (\mathbf{1},ad^{\otimes g}\otimes x_1\otimes\dotsb\otimes x_n)\right)
\end{equation*}
where $\tPMod{S_g^n}$ is a central extension by $\Z$ of the pure mapping class group of the compact surface $S_g^n$ of genus $g$ with $n$ boundary components,
and $\mathbf{1}$ and $ad$ are specific objects of $\Cat$.
These representations have compatibilities with regards to gluings of surfaces along boundary components
and fit into the structure of a topological modular functor
(\Cref{definitiongeometricmodularfunctor}).

As fundamental groups of Deligne-Mumford-Knudsen moduli spaces of curves
are mapping class group, the Riemann-Hilbert correspondence applied to the above representations yields
regular-singular bundles with flat connections $(\Nu_g(x_1,\dotsc,x_n),\nabla)$
on smooth stacks $\tMgb{g}{n}$ (\cite[6.4.1]{bakalovLecturesTensorCategories2000}, \Cref{definitiongeometricmodularfunctorBK}).
As in the case of configuration spaces,
these connections extend to suitable stacks $\Mgrbt{g}{n}{r}{s}$ compactifying $\tMgb{g}{n}$,
and fit together into a structure called geometric modular functor (\Cref{definitiongeometricmodularfunctor}). The following table recaps the situation
and also incorporates the case of ribbon fusion categories that we did not mention above.

%table
\begin{center}
  \begin{NiceTabular}{ |m{0.12\textwidth}|m{0.12\textwidth}|m{0.25\textwidth}|m{0.15\textwidth}|m{0.2\textwidth}| }
    \toprule
   Category:      & Functor:                  & Representations of:                                             & Regular-singular flat bundles on:           & ...or flat bundles on the compactification: \\[0.25cm] 
   \midrule
   Braided fusion & Braided functor           & Pure braid groups $\PB_n$                                       & $\Conf{n}{\C}/\C$                           & $\Mrpo{n}{r}$\phantom{aaa} (rk. \ref{remarkconf})\\[0.25cm] 
   \midrule
   Ribbon fusion  & Genus $0$ modular functor & Genus $0$ mapping class groups $\PMod{S_0^n}$                   & $\Mgb{0}{n}$\phantom{aaaa} (rk. \ref{remarkBKgenuszero}) & $\Mgrb{0}{n}{r}$\phantom{aaaaa} (sec. \ref{twistedmodulispaces})\\[0.25cm] 
   \midrule
   Modular fusion & Modular functor           & Central $\Z$-extensions $\tPMod{S_g^n}$ of mapping class groups & $\tMgb{g}{n}$\phantom{aaa} (sec. \ref{sectionBK})        & $\Mgrbt{g}{n}{r}{s}$\phantom{aaa} (sec. \ref{twistedmodulispaces})\\[0.25cm]  
   \bottomrule
  \end{NiceTabular}
\end{center}

%-------------------------------------------------------------------------------------------------------------------------

\subsection{Quasi-geometricity}

%-------------------------------------------------------------------------------------------------------------------------

The most studied family of modular categories and associated modular functors is constructed from simple Lie algebras.
More precisely, for each simple complex Lie algebra $\mathfrak{g}$, integer $\ell\geq 1$ called level
and root of unity $\zeta$ of order $r$ (prescribed by $\mathfrak{g}$ and $\ell\geq 1$), one has a modular category $\Cat_{\mathfrak{g},\zeta}$
and an associated modular functor $\Nu_{\mathfrak{g},\zeta}$.
The category $\Cat_{\mathfrak{g},\zeta}$ is a semi-simplification of a representation category of the quantum group $\mathrm{U}_\zeta(\mathfrak{g})$
(see \cite[3.3]{bakalovLecturesTensorCategories2000}),
while, for $\zeta=e^{\frac{2\pi i}{r}}$, the algebraic bundles with connection $((\Nu_{\mathfrak{g},\zeta})_g(x_1,\dotsc,x_n),\nabla)$
can be defined using the representation theory of the affine Lie algebra
associated to $\mathfrak{g}$ (see \cite[7.]{bakalovLecturesTensorCategories2000}).

From their quantum group construction, one sees that the categories $\Cat_{\mathfrak{g},\zeta}$ are defined on a cyclotomic field,
and hence so are the associated mapping class group representations.
Moreover, the affine Lie algebra construction of the 
$((\Nu_{\mathfrak{g},\zeta})_g(x_1,\dotsc,x_n),\nabla)$ can be done over $\Q$. Hence, for $\zeta=e^{2i\pi/r}$,
the representations/connections are \emph{quasi-geometric}
in the terminology of Etingof-Varchenko.

\begin{definition}[Quasi-geometric connection {\cite[4.18]{etingofPeriodicQuasimotivicPencils2023}}]
  Let $X$ be a smooth proper Deligne-Mumford stack over $\overline{\Q}$, $D\subset X$ a divisor with normal crossings, and $U=X\setminus D$.
  An algebraic regular-singular bundle with flat connection $(\cE,\nabla)$ on $(X_\C,D_\C)$ is \emph{quasi-geometric}
  if it can be defined over $(X,D)$, i.e. over $\overline{\Q}$,
  and if the monodromy representation of $\pi_1(U_\C^\mathrm{an})$ associated to $(\cE,\nabla)$ is conjugate to a representation
  with coefficients in $\overline{\Q}$.
\end{definition}

The notion of quasi-geometricity can be motivated by Simpson's geometricity conjecture explained in the next subsection.
In their work on periodic pencils of flat connections, Etingof and Varchenko conjectured the following:

\begin{conjecture}[{\cite[4.26]{etingofPeriodicQuasimotivicPencils2023}}]\label{conjectureEV}
  For any braided fusion category, the associated flat bundles on configuration spaces $\Conf{n}{\C}$ are quasi-geometric.
  For any modular category, the associated flat bundles on moduli spaces of curves $\tMgb{g}{n}$ are quasi-geometric.
\end{conjecture}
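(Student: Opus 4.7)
The plan is to deduce \Cref{conjectureEV} from Ocneanu rigidity, by first showing that the braided or modular fusion category $\Cat$ itself descends to a number field $K\subset\overline{\Q}$, and then propagating this descent through the construction of the mapping class group representations and through the Riemann-Hilbert correspondence to the algebraic flat bundles and all their compatibilities. In particular the descent will be uniform in $(g,n)$ and $(y;x_1,\dotsc,x_n)$, which gives the stronger statement of the abstract that a single number field $K$ works for the entire braided/modular functor.

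The core of the argument is to establish that any braided (resp.\ modular) fusion category $\Cat$ over $\C$ can be defined over a number field. Since $\Cat$ has finitely many isomorphism classes of simple objects and finite-dimensional $\hom$-spaces, once one fixes the combinatorial type (the fusion ring together with bases of the $\hom$-spaces between tensor products of simples), the associator, braiding, and ribbon structure consist of finitely many complex numbers satisfying the polynomial pentagon, hexagon, ribbon and modularity equations. These solutions form an affine scheme $X$ of finite type over $\Q$, and $\Cat$ corresponds to a $\C$-point $x\in X(\C)$. Ocneanu rigidity asserts precisely that the Zariski tangent space at $x$, modulo the tangent space of the gauge action, vanishes, so $x$ is isolated in its gauge orbit. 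Hence $x$ descends to a point of $X(K)$ for some number field $K$, yielding a $K$-linear fusion category $\Cat_K$ with $\Cat_K\otimes_K\C\simeq\Cat$.

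Once $\Cat$ is defined over $K$, every ingredient of the topological braided or modular functor is automatically defined over $K$: the representations of $\PB_n$ on $\hom_\Cat(y,x_1\otimes\dotsb\otimes x_n)$ and of $\tPMod{S_g^n}$ on $\hom_\Cat(\mathbf{1},\mathrm{ad}^{\otimes g}\otimes x_1\otimes\dotsb\otimes x_n)$ are built from iterated compositions of the $K$-rational associator, braiding, and (in the modular case) Reshetikhin-Turaev gluing data, and the partial composition and gluing compatibility maps are likewise polynomial in the categorical data. To conclude quasi-geometricity I would then invoke Deligne's algebraic Riemann-Hilbert correspondence on each of the compactifications $\Mrpo{n}{r}$ and $\Mgrbt{g}{n}{r}{s}$, which are themselves defined over $\Q$: the functor sending an algebraic regular-singular flat bundle on the $K$-stack to the associated $K$-representation of the topological fundamental group is fully faithful, and its essential image includes all $K$-representations after a possible finite enlargement of $K$. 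Applying it to the $K$-rational monodromies constructed above yields algebraic flat bundles defined over $K$ whose analytifications recover the original $(\Nu(y;x_1,\dotsc,x_n),\nabla)$ and $(\Nu_g(x_1,\dotsc,x_n),\nabla)$, and it transports the compatibility morphisms too, proving both \Cref{conjectureEV} and the uniform-$K$ refinement.

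The main obstacle is the precise scheme-theoretic formulation of Ocneanu rigidity. Classically it is phrased as the absence of infinitesimal deformations of a fusion category \emph{up to equivalence}, whereas the argument above needs it as the vanishing of the Zariski tangent space at a point of a genuine finite-type $\Q$-scheme parameterizing \emph{rigidified} categorical data, with the gauge freedom killed by fixing a representative set of simple objects and bases of $\hom$-spaces; reconciling these two formulations, and checking that the fusion data does descend to $\overline{\Q}$ rather than merely to some algebraically closed subfield of $\C$, is the heart of the proof. A secondary, technical obstacle is the compatibility of Deligne's Riemann-Hilbert correspondence with the stacky and possibly non-separated structure of $\Mrpo{n}{r}$ and $\Mgrbt{g}{n}{r}{s}$, needed so that the $K$-forms of the representations really integrate to $K$-forms of the algebraic flat bundles, uniformly in all the labels.
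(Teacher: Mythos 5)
Your first step---descending $\Cat$ itself to a number field via Ocneanu rigidity applied to the finite-type $\Q$-scheme of associator/braiding data---is sound, and it does show that the monodromy representations are conjugate to $\overline{\Q}$-representations. But that is only the half of quasi-geometricity that the paper describes as already going ``halfway'': the hard half is that the \emph{algebraic connection} is defined over $\overline{\Q}$. The gap is in your final step: the claim that the Riemann--Hilbert functor from regular-singular algebraic flat bundles on the $K$-stack to $K$-representations of the topological fundamental group is essentially surjective onto $K$-representations (even after a finite enlargement of $K$) is false. Riemann--Hilbert is a transcendental correspondence. Already for the rank-one local system on $\mathbb{G}_m$ with monodromy $2$, the associated regular-singular connection is $d+a\,\frac{dz}{z}$ with $e^{-2\pi i a}=2$, and by Gelfond--Schneider $a\notin\overline{\Q}$; so a $\overline{\Q}$-monodromy representation need not come from a $\overline{\Q}$-defined connection. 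Even on the proper stacks $\Mgrbt{g}{n}{r}{s}$, where the flat bundles have no singularities, the Betti and de Rham moduli spaces carry $\overline{\Q}$-structures that are exchanged only by an analytic isomorphism, so a $\overline{\Q}$-point of one need not map to a $\overline{\Q}$-point of the other---this is precisely the content of the statement you are trying to prove, so your argument assumes away the main difficulty.

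The paper closes this gap by running the rigidity argument on the connections rather than on the monodromy. Each truncation $\Nu^{\geq\chi}$ consists of finitely many bundles and isomorphisms, hence is defined over a smooth finite-type $\Q$-subalgebra $A\subset\C$; this yields an analytic family of truncated geometric functors over $(\Spec A\otimes_\Q\C)^{\mathrm{an}}$, and Ocneanu rigidity combined with the equivalence $\GMod_c^{\geq\chi}(\C)\simeq\Modcat$ of \Cref{forgetsoverC} forces the family to be isotrivial. Specializing at a $K$-point of $\Spec A$ then produces a $K$-form of the truncated functor, i.e.\ of the connections themselves, and a Galois-descent gluing argument (\Cref{isomorphismdescent}, \Cref{Galoisdescent}) assembles these truncations into a single $K$-form of the whole functor. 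The truncation device also resolves a finiteness issue your uniform-$K$ claim glosses over: the full functor involves infinitely many bundles, so one cannot directly spread it out over a finite-type algebra. If you wish to keep your Betti-side strategy, you would need to supplement it with a de Rham--side rigidity argument (an isolated point of a $\overline{\Q}$-variety of connections is a $\overline{\Q}$-point), which is in essence what the paper's spreading-out argument accomplishes.
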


One of their motivation for supporting this conjecture is a result called Ocneanu rigidity (\cite[2.28]{etingofFusionCategories2005}),
which says, in particular, that braided fusion and modular fusion categories have no non-trivial deformations.
This structural rigidity implies that such categories can always be defined on a number field, and thus that the associated
braid group/mapping class group representations are defined over a number field. This already goes halfway to proving quasi-geometricity,
and they suggest that Ocneanu rigidity should also imply that the associated connections are definable over $\overline{\Q}$.
In this paper, we complete this picture and prove their conjecture.

\begin{theorem*}[{\ref{mainresult}}]
  Let $\Nu$ be a geometric modular functor over $\C$, and let $(r,s)$ be a level for it.
  Then there exists a number field $K$
  such that $\Nu$ can be defined over $K$ in the sense of \Cref{definitiongeometricmodularfunctor},
  i.e. each bundle with flat connection and gluing, forgetful and permutation isomorphism defining $\Nu$
  can be defined over the twisted moduli spaces of curves $\Mgrbt{g}{n}{r}{s}_K$ over the field $K$.

  The same results hold for genus $0$ geometric modular functors and geometric braided functors.
\end{theorem*}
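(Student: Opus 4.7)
The plan is to combine Ocneanu rigidity with a Galois descent argument at the level of the full geometric modular functor. First, extract from $\Nu$ its underlying modular fusion category $\Cat$ (for instance by evaluating on genus~$0$ surfaces with few punctures, together with the braiding and ribbon structure encoded by the gluing and permutation isomorphisms). Ocneanu rigidity ensures $\Cat$ has no non-trivial infinitesimal deformations, and therefore $\Cat$, together with its structural data, is definable over some number field $K_0 \subset \C$. Since the topological modular functor underlying $\Nu$ is reconstructed from $\Cat$ by the Reshetikhin-Turaev construction, every associated mapping class group representation is conjugate to one with entries in $K_0$.

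The remaining task is to descend not just the monodromy but the whole geometric datum: the regular singular flat bundles $(\Nu_g(x_1,\dotsc,x_n),\nabla)$ on $\Mgrbt{g}{n}{r}{s}_\C$ together with the gluing, forgetful and permutation isomorphisms. For this I would exploit the action of $\Aut(\C/K_0)$ on geometric modular functors over $\C$. For $\sigma \in \Aut(\C/K_0)$, the conjugate $\Nu^\sigma$ is obtained by pulling back bundles, connections and structural isomorphisms along the map $\Mgrbt{g}{n}{r}{s}_\C \to \Mgrbt{g}{n}{r}{s}_\C$ induced by $\sigma$, and yields again a geometric modular functor with the same underlying category, and hence the same monodromy. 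Riemann-Hilbert then gives an analytic isomorphism $\Nu \cong \Nu^\sigma$, which one must upgrade to an algebraic one; this requires a rigidity statement for the deformation theory of geometric modular functors with fixed topological functor, which I would reduce via pants decompositions and the gluing axiom to the vanishing of deformation cohomology of $\Cat$, i.e.\ to Ocneanu rigidity once more.

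Assembling these ingredients, the $\Aut(\C/K_0)$-orbit of $\Nu$ is finite, and can be controlled uniformly across all $(g,n)$ and all tuples of objects thanks to the finiteness of the set of isomorphism classes of simple objects in $\Cat$ and of the generating gluings. Standard Galois descent for quasi-coherent sheaves with regular singular connection on smooth proper Deligne-Mumford stacks then produces a finite extension $K/K_0$ over which all the data of $\Nu$ simultaneously descend. The genus~$0$ modular functor and braided functor cases follow by restricting the same argument to the corresponding sub-collection of moduli spaces. The main obstacle I expect is the second step: passing from rigidity of the combinatorial datum $\Cat$ to rigidity of the geometric datum $\Nu$, since an algebraic flat connection on $\Mgrbt{g}{n}{r}{s}_\C$ carries strictly more information than its monodromy representation, and one must verify that this extra structure is rigidly pinned down by $\Cat$.
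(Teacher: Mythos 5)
Your proposal assembles the right ingredients in spirit (Ocneanu rigidity, transfer of rigidity from the category to the functor, descent), and you correctly locate the main difficulty in the passage from rigidity of $\CN$ to rigidity of the geometric datum $\Nu$. But two of your steps have genuine gaps. First, the mechanism you propose for that passage --- ``a rigidity statement for the deformation theory of geometric modular functors with fixed topological functor, reduced via pants decompositions to Ocneanu rigidity'' --- is precisely the content of the equivalence $\GMod_c(\C)\simeq\Modcat$ (\Cref{forgetsoverC}, a prior result of the paper quoted from \cite{godfardHodgeStructuresConformal2024}); it is not something to re-derive in passing, and the paper simply invokes it. Note also that your worry that ``an algebraic flat connection on $\Mgrbt{g}{n}{r}{s}_\C$ carries strictly more information than its monodromy'' is unfounded: these stacks are smooth and proper, so by GAGA and Riemann--Hilbert a flat bundle there is equivalent to a representation of $\pi_1$ of the analytification; the real extra content is the uniformity over infinitely many $(g,n,\ul)$ and the compatibility isomorphisms, which your argument does not control.

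Second, and more seriously, your descent step does not work as stated. Knowing that $\Nu^\sigma\simeq\Nu$ for all $\sigma\in\Aut{\C/K_0}$ (or that the orbit is finite) does not by itself produce a number field of definition: $\C/K_0$ is not an algebraic extension, there is no Galois descent along it, and for objects with automorphisms the field of moduli need not be a field of definition without coherent descent data satisfying a cocycle condition. The standard way to make ``isotrivial orbit $\Rightarrow$ defined over a number field'' rigorous is exactly the spreading-out argument the paper runs: spread $\Nu$ out over a finite-type smooth $\Q$-algebra $A\subset\C$, use \Cref{corollarydeformations} plus fullness in \Cref{forgetsoverC} to see that the resulting family over $(\Spec A\otimes_\Q\C)^{\mathrm{an}}$ is constant, and specialize at a $K$-point. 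This in turn forces the paper's other key device, which your proposal omits entirely: $\Nu$ consists of infinitely many bundles, so no finite-type algebra of definition exists a priori. The paper truncates to Euler characteristic $\geq\chi$ (finitely many bundles), proves the result for each truncation, and then glues the resulting number fields into a single one by an inductive Galois-descent argument (\Cref{isomorphismdescent}, \Cref{Galoisdescent}) resting on full-faithfulness of truncation over subfields of $\C$ (\Cref{forgetsoverK}). Your appeal to ``standard Galois descent producing a finite extension over which all the data simultaneously descend'' assumes precisely the uniformity that this two-step structure is designed to establish.
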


\begin{remark}
  The result in particular applies to the connections $((\Nu_{\mathfrak{g},\zeta})_g(x_1,\dotsc,x_n),\nabla)$ associated to Lie algebras
  for all values of $\zeta$, not just $\zeta=e^{2i\pi/r}$.
\end{remark}

We also prove the same result for the regular-singular bundles with flat connections on the spaces $\tMgb{g}{n}$, $\Mgb{0}{n}$ and $\Conf{n}{\C}/\C$,
associated to modular, ribbon and braided fusion categories respectively, see \Cref{mainresultBK}.
In fact, a large portion of this paper (\Cref{sectionBK} and \Cref{appendixregularsingular}) is devoted to
regular-singular connections and to making explicit in the algebraic setting the link between
the bundles on the spaces $\tMgb{g}{n}$ and those on their proper replacements $\Mgrbt{g}{n}{r}{s}$.

%-------------------------------------------------------------------------------------------------------------------------

\subsection{Simpson's geometricity conjecture}

%-------------------------------------------------------------------------------------------------------------------------

A large class of quasi-geometric flat bundles is provided by Gauss-Manin connections.
Gauss-Manin vector bundles with flat connections on $X$ a smooth Deligne-Mumford stack are constructed
through the following recipe: choose $U\subset X$ Zariski open dense,
$f:P\ra U$ a fibration in smooth proper varieties whose fiber above $x\in U$ we denote $P_x$,
and $k\geq 0$ an integer ; then the vector bundle's fibers at $x\in U$ is the cohomology group $\mathrm{H}^k(P_x;\C)$,
and the connection is the so called Gauss-Manin connection given by the identification of the homotopy type of nearby fibers.
We then define geometric flat bundles to be the subquotients of Gauss-Manin flat bundles.

Simpson conjectured\footnote{Simpson's conjecture is only stated for curves, but for not necessarily semisimple local systems.} the following partial converse
to the quasi-geometricity of Gauss-Manin connections, which fits in the philosophy of the standard
conjectures in algebraic geometry.

\begin{conjecture}[variant of {\cite[3.]{simpsonTranscendentalAspectsRiemannHilbert1990}}]\label{conjectureSimpson}
  Let $Y$ be a complement to a normal crossing divisor in a smooth proper Deligne-Mumford stack.
  Then any semisimple quasi-geometric flat bundle on $Y$ is geometric.
\end{conjecture}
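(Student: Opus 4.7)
The plan is to attack \Cref{conjectureSimpson} via the standard nonabelian-Hodge program; since the statement is a well-known open conjecture of Simpson, what follows is a strategic outline rather than a complete proof. First I would descend everything to an arithmetic base: starting from a semisimple quasi-geometric $(\cE,\nabla)$ on $(X,D)$, pick a number field $K\subset\overline{\Q}$ over which $(X,D)$ and $(\cE,\nabla)$ are defined. After enlarging $K$, the monodromy $\rho:\pi_1(Y^\an)\to\mathrm{GL}_r(\C)$ is conjugate to a representation into $\mathrm{GL}_r(K)$, and, using the regular-singular hypothesis together with a Bass--Serre argument, even into $\mathrm{GL}_r(\mathcal{O}_K[1/N])$ for some finite $N$; the triple $(\cE,\nabla,\rho)$ then spreads to a smooth arithmetic model $\mathcal{Y}\to\Spec\mathcal{O}_K[1/N]$.

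Next I would build a full $\ell$-adic compatible system. For each closed point $v\in\Spec\mathcal{O}_K[1/N]$ of residue characteristic $p$, the reduction of $(\cE,\nabla)$ modulo $v$ should acquire a Frobenius structure, yielding an overconvergent $F$-isocrystal on $\mathcal{Y}_{\kappa(v)}$. Applying the companion theorems of Abe--Esnault, Deligne--Drinfeld and Kedlaya would then produce, for every prime $\ell\neq p$, a lisse $\overline{\Q}_\ell$-sheaf $\rho_\ell$ on $\mathcal{Y}_{\overline{K}}$ whose Frobenius traces match the crystalline ones, thereby assembling $(\cE,\nabla)$ and $\rho$ into a compatible system in the sense of Serre.

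The decisive step is to upgrade this arithmetic compatible system into an actual geometric realisation. Following the Fontaine--Mazur philosophy, a semisimple compatible system that is de Rham and regular-singular at every place ought to occur in the cohomology of a smooth proper family. Concretely, after possibly shrinking to a Zariski open $U\subset Y$, one would construct $f:P\to U$ smooth proper and exhibit $(\cE,\nabla)|_U$ as a subquotient of the Gauss--Manin connection on $R^kf_*\C$, then extend across $D$ using the regular-singular structure. In the rigid case this can be attempted via Katz's middle-convolution machinery, following Esnault--Groechenig and Klingler; in the non-rigid case one would rather deform along the Hitchin base using Simpson's $\C^*$-action and try to land on a polarisable complex variation of Hodge structure, which by Corlette--Simpson would supply the desired motivic origin.

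The main obstacle is precisely this last step: realising an arithmetic compatible system geometrically is a nonabelian analogue of the Fontaine--Mazur conjecture and is currently known only in very restricted settings (rank one, rigid local systems, abelian or Shimura-type monodromy, VHS of abelian type). Consequently, in the generality stated \Cref{conjectureSimpson} remains open, and the above should be read as a programme indicating which tools might eventually combine to attack it, rather than as a proof.
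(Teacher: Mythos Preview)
The paper does not prove \Cref{conjectureSimpson}: it is stated there purely as an open conjecture of Simpson, serving only to motivate why quasi-geometricity of conformal blocks is worth establishing. There is therefore no ``paper's own proof'' to compare your proposal against.

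You correctly identify that the conjecture is open and are honest that your outline is a programme rather than a proof. The sketch you give (spread out to an arithmetic base, build a compatible system via companions, then invoke a Fontaine--Mazur-type principle to produce a geometric realisation) is a reasonable description of one line of attack in the literature, and you are right that the final step is precisely where the difficulty lies. But since the paper makes no claim to prove this conjecture, there is nothing further to assess: your submission is not a proof, and neither is anything in the paper.
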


Hence as semisimple conformal blocks are always quasi-geometric, we can expect them to have geometric constructions.
And indeed, for the modular categories $\Cat_{\mathfrak{g},\zeta}$ constructed from simple Lie algebras,
it is known, though the works of Feigin-Schechtman-Varchenko \cite{feiginAlgebraicEquationsSatisfied1995},
Ramadas \cite{ramadasHarderNarasimhanTraceUnitarity2009}, Looijenga \cite{looijengaUnitaritySL2conformalBlocks2009},
Belkale-Mukhopadhyay \cite{belkaleConformalBlocksCohomology2014} and Belkale-Fakhruddin \cite{belkaleConformalBlocksGenus2023},
that the associated conformal blocks in genus $0$ on the configuration spaces $\Conf{n}{\C}/\C$
are geometric (see \cite[1.5]{godfardConstructionHodgeStructures2024} for a brief review of references).
It is not known whether this fact extends to any modular category,
and quasi-geometricity gives a motivation to study this question of geometricity for families of examples beyond the $\Cat_{\mathfrak{g},\zeta}$,
for example the Drinfeld centers of (generalized) Haagerup fusion categories \cite{grossmanDrinfeldCentersFusion2023}.

Another consequence of \Cref{conjectureSimpson} is that semisimple conformal blocks should support complex variations of Hodge structures.
The existence of such Hodge structures is proved by the author in \cite{godfardHodgeStructuresConformal2024}.

The conjecture, as stated, assumes the semisimplicity of the local system. In the work \cite{godfardSemisimplicityConformalBlocks2025},
we prove that conformal blocks are always semisimple, hence fit into the framework of the above conjecture.

%-------------------------------------------------------------------------------------------------------------------------

\subsection{Outline of the proof}

%-------------------------------------------------------------------------------------------------------------------------

The proof of quasi-geometricity is an adaptation of the following simple proof that, for $X$ smooth projective over a number field $\Q$,
any rigid semisimple\footnote{Semisimplicity is used to ensure that the orbit of the monodromy representation is closed.
In the context of modular or braided functors, the orbit of the associated modular or braided category will always be closed,
hence the absence of a semisimplicity assumption in our main result.}
flat bundle $(\cE,\nabla)$ on $X_\C$ can be defined over $X_K$ for some number field $K$.

\begin{proof}
  By finiteness, $(\cE,\nabla)$ is in fact defined over a $\Q$-sub-algebra $A\subset \C$ of finite type,
  which we can assume to be smooth. Then we have a family $(\cE_t,\nabla_t)$ of flat bundles
  over $X_\C$ parametrized by the smooth complex manifold $B=(\Spec A\otimes_\Q \C)^\mathrm{an}$.
  Rigidity of the monodromy representation associated to $(\cE,\nabla)$ implies that
  $(\cE_t,\nabla_t)$ is isomorphic to $(\cE,\nabla)$ for all $t\in B$. Now $B$ has a $K$-point $t_0$ for some number field $K$,
  and then $(\cE_{t_0},\nabla_{t_0})$ is defined over $X_K$ and isomorphic to $(\cE,\nabla)$.  
\end{proof}

When trying to adapt the above proof to the case of a braided/modular functor, a finiteness problem arises:
the functor is comprised of infinitely many flat bundles and hence we may be unable to find a sub-algebra $A\subset \C$
of definition that is of finite type over $\Q$. A solution to this problem is to work with truncations of the braided/modular functor,
comprised of only a finite number of flat bundles.
In fact, no data is lost when truncating and truncated braided/modular functors are deformation rigid (see \Cref{forgetsoverC}).
Using this, the above proof goes through and provides the result of \Cref{mainresult} for truncations of braided/modular functors.

The last step of the proof is to show that if a truncation of a braided/modular functor can be defined over a number field $K$,
then the whole braided/modular functor can be defined over the same $K$. This is achieved through results on the fullfaithfulness
of the truncation over number fields (see \Cref{subsectionsubfields}).

%-------------------------------------------------------------------------------------------------------------------------

\subsection{Organization of the paper}

%-------------------------------------------------------------------------------------------------------------------------

In \Cref{sectionbundles}, we introduce categories of modular/ribbon/braided fusion categories over $\C$, modular/genus $0$ modular/braided functors
over any characteristic $0$ field, and detail the relationship between the former and the latter when the field of definition is $\C$.
\Cref{twistedmodulispaces} is devoted to defining over $\Q$ the twisted moduli spaces of curves on which modular functors live.

\Cref{sectionBK} explains the Bakalov-Kirillov definition of modular functors, where regular-singular connections are considered instead
of connections on twisted compactifications. The definition is then explicitly related to the one of \Cref{sectionbundles}.
The definition of specialization maps and some proofs are deferred to \Cref{appendixregularsingular}.

\Cref{sectionmodular} is devoted to the statement and proof of the main result, quasi-geometricity of conformal blocks. On the way, truncations
of modular functors and Ocneanu rigidity are discussed.

The purpose of \Cref{appendixregularsingular} on regular-singular connections is threefold:
to explain some subtleties appearing when working on non-algebraically closed fields, to introduce and prove well-definiteness of the specialization
of a regular-singular connection to the normal bundle of a divisor, and to study the behavior of regularity and the specialization map
with respect to taking root stack on a divisor.

%-------------------------------------------------------------------------------------------------------------------------

\subsection{Acknowledgements}

%-------------------------------------------------------------------------------------------------------------------------

This paper forms part of the PhD thesis of the author.
The author thanks Julien Marché for his help in writing this paper.
The author is especially thankful to Pavel Etingof for suggesting to work on his conjecture with Varchenko and for helpful correspondences.
The author also thanks Jean-Benoît Bost and Aleksander Zakharov for helpful discussions.

%-------------------------------------------------------------------------------------------------------------------------

\section{Bundles with flat connection associated to modular, ribbon and braided categories}\label{sectionbundles}

The subject of these notes is the bundles with connections over (twisted) moduli spaces of curves associated to braided fusion categories,
ribbon fusion categories and modular fusion categories. This section introduces the relevant objects.

%def of modular/ribbon/braided fusion categories
\subsection{Modular/ribbon/braided fusion categories}

For definitions of monoidal categories, rigid monoidal categories, fusion category, braided monoidal categories,
twists on braided monoidal categories and modular categories, see \cite[2.1, 2.10, 4.1, 8.1, 8.10 and 8.13]{etingofTensorCategories2015}.
In these notes, the base field for such categories will always be $\C$.

\begin{notation}
  For $\Lambda$ a finite set, $\VL$ denotes the category $(\mathrm{Vect}_\C)^\Lambda$ of $\Lambda$-colored vector spaces,
  i.e. $\Lambda$-indexed sequences $(V_\mu)_{\mu\in\Lambda}$ of vector spaces.
  The object of $\VL$ corresponding to the vector space $\C$ colored by $\lambda\in\Lambda$ will be denoted $[\lambda]$.
  In terms of sequences, $[\lambda]$ is $(V_\mu)_{\mu\in\Lambda}$ with $V_\lambda=\C$ and $V_\mu=0$ for $\lambda\neq\mu$.
\end{notation}

\begin{definition}[{\cite[5.5]{godfardHodgeStructuresConformal2024}}]\label{definitionribbon}
  Let $\Rib$ be the groupoid whose:
  \begin{description}
    \item[(1)] objects are pairs $(\Lambda,\VL)$, where $\Lambda$ is some finite set, and $\VL$ is endowed with
    the structure of a ribbon fusion category;
    \item[(2)] morphisms from $(\Lambda_1,\VLone)$ to $(\Lambda_2,\VLtwo)$ are pairs $(f,\phi)$, with $f:\Lambda_1\ra\Lambda_2$ a bijection and 
    $\phi:\otimes_1\simeq f^*\otimes_2$ a natural isomorphism, such that they induce a monoidal isomorphism $\VLone\ra\VLtwo$
    compatible with braidings and twists. In other words, morphisms form $(\Lambda_1,\VLone)$ to $(\Lambda_2,\VLtwo)$ are ribbon isomorphisms that
    are induced by a bijection $f:\Lambda_1\ra\Lambda_2$ at the level of the $\C$-linear categories and are strict on tensor units.
  \end{description}
  One similarly defines the groupoid $\Braid$ of braided fusion categories and the full sub-groupoid $\Modcat\subset \Rib$
  of modular fusion categories.
\end{definition}

\begin{remark}
  A fusion structure on $\VL$ induces a unique involution $\lambda\mapsto\lambda^\dagger$ of $\Lambda$
  and the choice of a fixed point $0\in\Lambda$ of this involution, such that for each $\lambda$, $[\lambda]^*\simeq [\lambda^\dagger]$
  and $1\simeq [0]$.
  Such a finite set with involution and preferred fixed point will be called a set of colors.
\end{remark}

%def of the moduli spaces, comparison with non-twisted versions and configuration spaces
\subsection{Twisted moduli spaces of curves: definition over rational numbers}\label{twistedmodulispaces}

See \cite[2.4]{godfardHodgeStructuresConformal2024} for details on the twisted moduli spaces of curves over $\C$.
The aim of this section is just to briefly explain how these moduli spaces and the natural maps between them are in fact defined over $\Q$.
We reuse notation and terminology from \cite[2]{godfardHodgeStructuresConformal2024}.
By $\Mg{g}{n}$ we mean the moduli space of genus $g$ curves with $n$ disjoint sections seen as a stack over $\Q$,
and by $\oMg{g}{n}$ the Deligne-Mumford compactification, also over $\Q$. Fix $2$ integers $r,s\geq 1$.

Denote by $\bbV\otMgb{g}{n}$ the product over $\oMg{g}{n}$ of the total spaces of the $n$ tangent bundles $T_i$, $i=1,\dotsc,n$, to the sections
and of the anomaly bundle $H_{\mathcal{M}}$ (see \cite[2.42]{godfardHodgeStructuresConformal2024}).
For $\overline{\mathcal{C}}_{g,n}\ra \oMg{g}{n}$ the universal curve and $\sigma_i:\oMg{g}{n}\ra \overline{\mathcal{C}}_{g,n}$
the $i$-th section, $T_i$ is defined as $\sigma_i^*T\overline{\mathcal{C}}_{g,n}$.

Let $\tMgb{g}{n}$ be the complement in $\bbV\otMgb{g}{n}$
of the normal crossing divisor $D=D_\partial\cup D_1\cup\dotsb\cup D_n\cup D_H$, where $D_\partial$ is the locus of singular curves,
$D_i$ is the zero section of the tangent bundle at the $i$-th section, and $D_H$ is the zero section of $H_{\mathcal{M}}$.

Let $\bbV\Mgrbt{g}{n}{r}{s}$ be the stack obtained by taking root stack independently locally on each component of $D$,
with order $r$ on all components of $D_\partial\cup D_1\cup\dotsb\cup D_n$ and order $s$ on $D_H$.
In the notation of \cite[4.5]{chiodoStableTwistedCurves2008}, decomposing $D_\partial$ into components  $\bigcup_k D_{\partial,k}$, this means:
\begin{equation*}
  \bbV\Mgrbt{g}{n}{r}{s} = \bbV\otMgb{g}{n}\left[\sum_k \frac{D_{\partial,k}}{r}+\sum_i\frac{D_i}{r}+\frac{D_H}{s}\right].
\end{equation*}
See \cite[2.3, 4.5]{chiodoStableTwistedCurves2008} for more details on what is meant by independent root stack on a normal crossing divisor
with self-crossings in this context.
The twisted moduli space $\Mgrbt{g}{n}{r}{s}$ is then the reduced stack associated to the pullback of
the intersection $D_1\cap\dotsb\cap D_n\cap D_H$ of the zero sections along the root stack map $\bbV\Mgrbt{g}{n}{r}{s}\ra\bbV\otMgb{g}{n}$,
i.e. $\Mgrbt{g}{n}{r}{s}$ is $X^\mathrm{red}$ with $X$ as below:
% https://q.uiver.app/#q=WzAsNCxbMCwwLCJYIl0sWzIsMiwiXFxiYlZcXG90TWdie2d9e259LiJdLFsyLDAsIlxcYmJWXFxNZ3JidHtnfXtufXtyfXtzfSJdLFswLDIsIkRfMVxcY2FwXFxkb3RzYlxcY2FwIERfblxcY2FwIERfSCJdLFswLDNdLFszLDFdLFsyLDFdLFswLDJdLFswLDEsIiIsMSx7InN0eWxlIjp7Im5hbWUiOiJjb3JuZXIifX1dXQ==
\[\begin{tikzcd}
	X && {\bbV\Mgrbt{g}{n}{r}{s}} \\
	\\
	{D_1\cap\dotsb\cap D_n\cap D_H} && {\bbV\otMgb{g}{n}.}
	\arrow[from=1-1, to=1-3]
	\arrow[from=1-1, to=3-1]
	\arrow["\lrcorner"{anchor=center, pos=0.125}, draw=none, from=1-1, to=3-3]
	\arrow[from=1-3, to=3-3]
	\arrow[from=3-1, to=3-3]
\end{tikzcd}\]

The stack $\Mgrbt{g}{n}{r}{s}$, defined over $\Q$, coincides over $\C$ with the one similarly denoted in \cite[2.4]{godfardHodgeStructuresConformal2024}
(see \cite[4.5]{chiodoStableTwistedCurves2008}).

Removing the anomaly bundle $H_{\mathcal{M}}$ from the definition above yields a stack $\Mgrb{g}{n}{r}$ defined over $\Q$,
which, again, coincides over $\C$ with the one similarly denoted in \cite[2.4]{godfardHodgeStructuresConformal2024}.
Note that when $g=0$, the anomaly bundle is canonically trivial by definition and hence $\Mgrbt{0}{n}{r}{s}\simeq \Mgrb{0}{n}{r}\times\mathrm{B}\mu_s$ canonically.

The gluing and forgetful maps between the moduli spaces $\oMg{g}{n}$ induce, through the root stack construction above,
similar maps between the stacks $\Mgrbt{g}{n}{r}{s}$ and between the stacks $\Mgrb{g}{n}{r}$.
For more details on these maps, see \cite[2.28, 2.29]{godfardHodgeStructuresConformal2024} and \Cref{{definitiongeometricmodularfunctor}} below.

\begin{remark}
  As in \cite[2.26]{godfardHodgeStructuresConformal2024}, we use the conventions $\Mgrb{0}{2}{r}\simeq\Br{r}$ and $\Mgrbt{0}{2}{r}{s}\simeq\Br{r}\times\Br{s}$.
  See there for more details.
\end{remark}

%def of modular functor
\subsection{Modular functors}

We reproduce here the definition of geometric modular functor given in \cite[2.4]{godfardHodgeStructuresConformal2024},
but on any field of characteristic $0$, using the fact, explained in \Cref{twistedmodulispaces}, that the stacks $\Mgrbt{g}{n}{r}{s}$
and $\Mgrb{g}{n}{r}$ are defined over $\Q$. For $K$ a field of characteristic $0$, we will denote by $\Mgrbt{g}{n}{r}{s}_K$ and $\Mgrb{g}{n}{r}_K$
the corresponding stacks over $K$.

\begin{definition}[Modular Functor]\label{definitiongeometricmodularfunctor}
  Let $\Lambda$ be a set of colors, $r,s\geq 1$ integers and $K$ a field of characteristic $0$.
  Then a geometric modular functor over $K$ with level $(r,s)$ is the data,
  for each $g,n\geq 0$ with $(g,n)\neq (0,0),(0,1),(1,0)$ and $\ul\in\Lambda^n$,
  of a bundle with flat connection $(\Nu_g(\ul),\nabla)$ over $\Mgrbt{g}{n}{r}{s}_K$, together with some isomorphisms
  described below:
  \begin{description}
      \item[(G-sep)] For each gluing map
      \begin{equation*}
        q:\Mgrbt{g_1}{n_1+1}{r}{s}_K\times\Mgrbt{0}{2}{r}{s}_K\times\Mgrbt{g_2}{n_2+1}{r}{s}_K\lra \Mgrbt{g_1+g_2}{n_1+n_2}{r}{s}_K
      \end{equation*}
      and each $\ul$, an isomorphism preserving the connections:
      \begin{equation*}
        q^*\Nu_{g_1+g_2}(\lambda_1,\dotsc,\lambda_n)\simeq \bigoplus_\mu\Nu_{g_1}(\lambda_1,\dotsc,\lambda_{n_1},\mu)\otimes
        \Nu_0(\mu,\mu^\dagger)^\vee\otimes
        \Nu_{g_2}(\lambda_{n_1+1},\dotsc,\lambda_n,\mu^\dagger);
      \end{equation*}
      \item[(G-nonsep)] For each gluing map
      \begin{equation*}
        p:\Mgrbt{g-1}{n+2}{r}{s}_K\times\Mgrbt{0}{2}{r}{s}_K\lra \Mgrbt{g}{n}{r}{s}_K
      \end{equation*}
      and each $\ul$, an isomorphism preserving the connections:
      \begin{equation*}
        p^*\Nu_g(\lambda_1,\dotsc,\lambda_n)\simeq \bigoplus_\mu\Nu_{g-1}(\lambda_1,\dotsc,\lambda_n,\mu,\mu^\dagger)
        \otimes \Nu_0(\mu,\mu^\dagger)^\vee;
      \end{equation*}
      \item[(N)] For each forgetful map $f:\Mgrbt{g}{n+1}{r}{s}_K\ra\Mgrbt{g}{n}{r}{s}_K$, and each $\ul$, an isomorphism preserving the connections:
      \begin{equation*}
        f^*\Nu_g(\lambda_1,\dotsc,\lambda_n)\simeq \Nu_g(\lambda_1,\dotsc,\lambda_n,0)
      \end{equation*}
      and a canonically isomorphism $(\Nu_0(0,0),\nabla)\simeq (\cO,d)$ (trivial flat bundle);
      \item[(Perm)] For each $\ul\in\Lambda^n$ and permutation $\sigma\in S_n$, an isomorphism:
      \begin{equation*}
        \Nu_g(\lambda_1,\dotsc,\lambda_n)\simeq \sigma^*\Nu_g(\lambda_{\sigma(1)},\dotsc,\lambda_{\sigma(n)}).
      \end{equation*}
  \end{description}
  The isomorphisms of \textbf{(G-sep)}, \textbf{(G-nonsep)}, \textbf{(N)} and \textbf{(Perm)}
  are to be compatible with each other and repeated applications.
  Moreover, we ask for the gluing to be symmetric in the sense that
  for each gluing isomorphism above, the change of variable $\mu\mapsto\mu^\dagger$
  on the right-hand side has the same effect as permuting the summands and applying the \textbf{(Perm)} isomorphisms
  $\Nu_0(\mu,\mu^\dagger)\simeq\Nu_0(\mu^\dagger,\mu)$ induced by $\sigma\in S_2\setminus\{\id\}$.
  
  The functor is also assumed to verify the non-degeneracy axiom:
  \begin{description}
      \item[(nonD)] For each $\lambda$, $\Nu_0(\lambda,\lambda^\dagger)\neq 0$.
  \end{description}
\end{definition}

Sometimes, we will shorten $\Nu_0(\ul)$ to $\Nu(\ul)$.
Because $\Mgrbt{g}{n}{r}{s}_K$ is a $\mu_s$-gerbe, $\mu_s$ acts on the fibers of $\Nu_g(\ul)$.
Using the gluing axiom, one sees that this action is by scalars and independent of $g$ and $\ul$.
Hence to each modular functor $\Nu$ is associated a character $c$ of $\mu_s$ over $K$, called the \textbf{central charge} of $\Nu$.
When $K$ is given with an embedding into $\C$, we will identify $c$ with the complex number $c(e^{2\pi i/s})$.

\begin{remark}
  We use somewhat non-standard gluing maps involving $\Mgrbt{0}{2}{r}{s}$: the points are not directly glued together, but are first glued to the marked
  points of a twice marked sphere. This sphere is then contracted as it is an unstable component in the image, and hence adding it may seem pointless.
  However, we use it to have a more compact way to deal with the symmetry of the gluing. Indeed, even when $\mu=\mu^\dagger$,
  there are some examples for which the permutation isomorphism $\Nu_0(\mu,\mu)\simeq\Nu_0(\mu,\mu)$ induced by $\sigma\in S_2\setminus\{\id\}$
  is not the identity. See \cite[rmk. 2.7]{godfardHodgeStructuresConformal2024} for more details.
\end{remark}

\begin{definition}[Genus $0$ Modular Functor]
  Let $\Lambda$ be a set of colors, $r\geq 1$ an integer and $K$ a field of characteristic $0$.
  Then a geometric genus $0$ modular functor over $K$ with level $r$ is the data,
  for each $n\geq 2$ and $\ul\in\Lambda^n$,
  of a bundle with flat connection $(\Nu(\ul),\nabla)$ over $\Mgrb{0}{n}{r}_K$ together with isomorphisms
  as in \Cref{definitiongeometricmodularfunctor}, with $\Mgrbt{0}{n}{r}{s}_K$ replaced by $\Mgrb{0}{n}{r}_K$,
  satisfying all the same axioms\footnote{Note that \textbf{(G-nonsep)} is vacuous in genus $0$.}.
\end{definition}

\begin{remark}\label{remarklevel}
  The choice of level is not significant in the definition of (genus $0$) modular functor.
  Indeed, if $r'$ is a multiple of $r$ and $s'$ a multiple of $s$, pullback along the maps $\Mgrbt{g}{n}{r'}{s'}\ra\Mgrbt{g}{n}{r}{s}$
  produces a modular functor of level $(r',s')$ from a modular functor of level $(r,s)$.
  Similarly for genus $0$ modular functors.
\end{remark}

%def of braided functor
\subsection{Braided functors}

Fix a level $r\geq 1$. For $n\geq 1$, number the markings on curves in $\oMg{0}{n+1}$ from $0$ to $n$.
Denote by $\bbV\mysecondleftidx{^1}{\overline{\mathcal{M}}}{_{0,{n}}}$ the
total space of the tangent bundle to the $0$-th section in $\oMg{0}{n+1}$. As in \Cref{twistedmodulispaces}, consider the divisor with normal crossings
$D_\partial\cup D_0\subset\bbV\mysecondleftidx{^1}{\overline{\mathcal{M}}}{_{0,{n}}}$
where $D_\partial$ is the locus of singular curves and $D_0$ is the zero section of the bundle.

Define $\bbV\Mrpo{n}{r}$ as the space obtained by taking root stack of order $r$ independently on each component of $D_\partial\cup D_0$,
and $\Mrpo{n}{r}$ as the reduced stack associated to the pullback of $D_0\subset \bbV\mysecondleftidx{^1}{\overline{\mathcal{M}}}{_{0,{n}}}$ to $\bbV\Mrpo{n}{r}$.
We will use the convention that $\Mrpo{1}{r}=*$.

\begin{remark}\label{remarkconf}
  Note that $\bbV\mysecondleftidx{^1}{\overline{\mathcal{M}}}{_{0,{n}}}\setminus D_\partial\cup D_0$
  is the quotient $\Conf{n}{\A^1}/\mathbb{G}_a$ of the configuration space of $n$ ordered points in $\A^1$ by translations.
  The fundamental group of its analytification over $\C$ is then the pure braid group $\PB_n$ on $n$ strands.
  By Van-Kampen, the fundamental group of the analytification of $\Mrpo{n}{r}$ is the quotient of $\PB_n$
  by the $r$-th powers of Dehn twists. Hence $\Mrpo{n}{r}$ can be thought of as a suitable proper stack defined over $\Q$ replacing
  $\Conf{n}{\A^1}/\mathbb{G}_a$.
\end{remark}

The gluing and forgetful maps between the moduli spaces $\oMg{0}{n}$ induce similar maps between the stacks $\Mrpo{n}{r}$.
See \cite[2.3.3 and Figure 1.1]{godfardConstructionHodgeStructures2024} for more explanations.

\begin{definition}[Braided Functor, compare with {\cite[2.14, 2.15]{godfardHodgeStructuresConformal2024}}]\label{braidedfunctor}
  Let $\Lambda$ be a finite set, $r\geq 1$ an integer and $K$ a field of characteristic $0$.
  A geometric braided functor over $K$ with level $r$ is the data,
  for each $n\geq 1$, $\ul\in\Lambda^n$ and $\mu\in \Lambda$,
  of a bundle with flat connection $(\Nu(\mu;\ul),\nabla)$ over $\Mrpo{n}{r}_K$, together with some isomorphisms
  described below:
  \begin{description}
      \item[(G)] For each gluing map
      \begin{equation*}
        q:\Mrpo{n_1+1}{r}_K\times\Mrpo{n_2}{r}_K\lra \Mrpo{n}{r}_K
      \end{equation*}
      and each $\mu,\ul$, an isomorphism preserving the connections:
      \begin{equation*}
        q^*\Nu(\mu;\lambda_1,\dotsc,\lambda_n)\simeq \bigoplus_\nu\Nu(\mu;\lambda_{1},\dotsc,\lambda_{n_1},\nu)
        \otimes\Nu(\nu;\lambda_{n_1+1},\dotsc,\lambda_{n});
      \end{equation*}
      \item[(N)] For each forgetful map $f:\Mrpo{n+1}{r}_K\ra\Mrpo{n}{r}_K$, and each $\mu,\ul$, an isomorphism preserving the connections:
      \begin{equation*}
        f^*\Nu(\lambda_1,\dotsc,\lambda_n)\simeq \Nu(\lambda_1,\dotsc,\lambda_n,0)
      \end{equation*}
      and for each $\lambda\in\Lambda$ a canonically isomorphism $(\Nu(\lambda;\lambda),\nabla)\simeq (\cO,d)$ (trivial flat bundle);
      \item[(Dual)] For each $\lambda$ there exists a unique $\mu$ such that $\Nu(0;\lambda,\mu)\neq 0$.
      This $\mu$ will be denoted $\lambda^\dagger$;
      \item[(Perm)] For each $\mu,\ul\in\Lambda^n$ and permutation $\sigma\in S_n$, an isomorphism:
      \begin{equation*}
        \Nu(\mu;\lambda_1,\dotsc,\lambda_n)\simeq \sigma^*\Nu(\mu;\lambda_{\sigma(1)},\dotsc,\lambda_{\sigma(n)}).
      \end{equation*}
  \end{description}
  The isomorphisms of \textbf{(G)}, \textbf{(N)} and \textbf{(Perm)}
  are to be compatible with each other and repeated applications.
\end{definition}

\Cref{remarklevel} on the unimportance of the choice of the level also applies to geometric braided functors.

\begin{terminology}
  The generic term \textit{geometric functor} will refer to a geometric modular functor, a genus $0$ geometric modular functor,
  or a geometric braided functor.
\end{terminology}

%the relationship with the categories
\subsection{Relationship with modular/ribbon/braided fusion categories}

\begin{definition}
  Let $K$ be a field and $c$ a character of $\mu_s$ over $K$. We will denote by $\GMod_c(K)$ the groupoid of geometric modular functors
  defined over $K$ with central charge $c$, where we identify modular functors of different levels as in \Cref{remarklevel}.

  An isomorphism between $\Nu$ and $\Nu'$ in $\GMod_c(K)$ is a bijection $\phi:\Lambda\simeq\Lambda'$ preserving the involutions and $0$ together with
  a family of isomorphisms $\Nu_g(\ul)\simeq\Nu'_g(\phi(\ul))$ compatible with gluing, forgetful, normalization and permutation isomorphisms.

  We will use the notation $\GMod^0(K)$ for the groupoid of genus $0$ geometric modular functors over $K$, up to change of level,
  and the notation $\BraidFun(K)$ for the groupoid of geometric braided functors, up to change of level.
\end{definition}

One can associate to a geometric (genus $0$) modular functor over $\C$ a ribbon weakly fusion category and to a geometric braided functor over $\C$ a
braided weakly fusion category, see \cite[5.]{godfardHodgeStructuresConformal2024}.
Moreover, Etingof and Penneys recently proved the long-standing conjecture that braided weakly fusion categories are braided fusion categories
\cite{etingofRigidityNonnegligibleObjects2024}. Together with the work of Bakalov and Kirillov \cite[5.4.1, 6.7.13]{bakalovLecturesTensorCategories2000},
this implies that to a modular functor is associated a modular fusion category, to a genus $0$ modular functor is associated a ribbon fusion category,
and to a braided functor is associated a braided fusion category.
These can be put into the commutative diagram of groupoids below.
For more details, see \cite[5.]{godfardHodgeStructuresConformal2024}. 
% https://q.uiver.app/#q=WzAsNixbMCwwLCJcXGJpZ3NxY3VwX2NcXEdNb2RfYyhcXEMpIl0sWzEsMCwiXFxHTW9kXjAoXFxDKSJdLFsyLDAsIlxcQnJhaWRGdW4oXFxDKSJdLFsxLDEsIlxcUmliIl0sWzIsMSwiXFxCcmFpZC4iXSxbMCwxLCJcXE1vZGNhdCJdLFswLDFdLFsxLDMsIlxcbWF0aHJte2Z9XjAiXSxbMSwyXSxbMiw0LCJcXG1hdGhybXtmfV5iIl0sWzMsNF0sWzAsNSwiXFxzcWN1cF9jXFxtYXRocm17Zn1fYyIsMl0sWzUsM11d
\[\begin{tikzcd}
	{\bigsqcup_c\GMod_c(\C)} & {\GMod^0(\C)} & {\BraidFun(\C)} \\
	\Modcat & \Rib & {\Braid.}
	\arrow[from=1-1, to=1-2]
	\arrow["{\sqcup_c\mathrm{f}_c}"', from=1-1, to=2-1]
	\arrow[from=1-2, to=1-3]
	\arrow["{\mathrm{f}^0}", from=1-2, to=2-2]
	\arrow["{\mathrm{f}^b}", from=1-3, to=2-3]
	\arrow[from=2-1, to=2-2]
	\arrow[from=2-2, to=2-3]
\end{tikzcd}\]

The Reshetikhin-Turaev construction \cite{reshetikhinInvariants3manifoldsLink1991} provides a functor the other way:
given a modular category, it provides a $2+1$ TQFT and hence a modular functor. Here is a statement encapsulating what we will need from this construction
and its simpler genus $0$ variants.

\begin{theorem}[{\cite[5.9]{godfardHodgeStructuresConformal2024}, based on \cite[5.4.1, 6.7.13]{bakalovLecturesTensorCategories2000}}]\label{fullfaithfulnesses}
  The functors $\mathrm{f}^0$ and $\mathrm{f}^b$ are equivalences.
  The functors $\mathrm{f}_c$ are fully faithful, and every element of $\Modcat$
  is in the essential image of some $\mathrm{f}_c$ for a well chosen value\footnote{See \cite[5.7.10]{bakalovLecturesTensorCategories2000} for the exact values $c$ may take.}
  of $c$.
\end{theorem}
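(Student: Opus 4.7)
The plan is to construct, in each case, a quasi-inverse via the Riemann--Hilbert correspondence on the genus-zero bundles and via the Reshetikhin--Turaev TQFT in higher genus, and then to verify that the resulting data satisfy the axioms of a geometric functor.

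Starting with the braided case $\mathrm{f}^b$, given a braided fusion category on $\VL$ I would build a geometric braided functor as follows. For each $n\ge 1$, $\mu$ and $\ul\in\Lambda^n$, the associator and braiding make $\hom([\mu],[\lambda_1]\otimes\cdots\otimes[\lambda_n])$ into a representation of $\PB_n=\pi_1(\Conf{n}{\A^1}/\mathbb{G}_a)^{\mathrm{an}}$. A key finiteness lemma available in any fusion category asserts that Dehn twists act by roots of unity whose common order divides some $r$ depending only on $\Cat$; hence the action descends to $\pi_1(\Mrpo{n}{r}^{\mathrm{an}})$ and, by Riemann--Hilbert, produces a regular-singular flat algebraic bundle $\Nu(\mu;\ul)$ over $\Mrpo{n}{r}$. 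The $\textbf{(G)}$, $\textbf{(N)}$ and $\textbf{(Perm)}$ isomorphisms are read off from the tensor, unit and symmetric structure on $\VL$.

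For full faithfulness of $\mathrm{f}^b$, I would restrict a morphism of braided functors to fibers at a standard basepoint to get braid-equivariant isomorphisms on hom spaces compatible with the gluing, from which the whole braided monoidal structure can be reconstructed: the $n=2$ gluing recovers the tensor product on $\VL$, the $n=3$ gluing recovers the associator, and the permutation isomorphism together with the $\PB_2$-monodromy of $\Nu(\nu;\lambda,\mu)$ encodes the braiding. Essential surjectivity then uses the Etingof--Penneys result cited in the paper to guarantee that the braided \emph{weakly} fusion category extracted from any $\Nu$ is genuinely fusion. The same strategy handles $\mathrm{f}^0$, with the ribbon twist being recovered from the monodromy around the tangential divisor $D_0$ (i.e.\ the rotation of the tangent line at a marked point).

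For $\mathrm{f}_c$, the Reshetikhin--Turaev construction extends the genus-zero data to all $(g,n)$: modularity is precisely what enables non-separating gluings, and the central charge $c$ arises as the character through which the anomaly gerbe $\mu_s$ acts on all bundles. Full faithfulness then reduces to the genus-zero case, since iterated separating and non-separating gluings relate every higher-genus bundle to pieces of the form $\Nu_0(\ul)$. The assertion that every modular category is in the image of some $\mathrm{f}_c$ becomes the statement that an admissible central charge can always be chosen, which is the Chern--Simons-type classification of anomalies worked out in \cite{bakalovLecturesTensorCategories2000}. The main obstacle, and the content of that book, is verifying that Reshetikhin--Turaev actually yields a geometric modular functor in the sense of \Cref{definitiongeometricmodularfunctor}: one must check the pentagon and hexagon, the $(S,T)$-relations at genus one, and the factorization axioms on all higher-genus boundary strata, together with compatibility of the flat connections under gluing. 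Once these are in place the comparison with the algebraic structure reduces to a finite list of generators in low $(g,n)$.
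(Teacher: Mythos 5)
There is no internal proof to compare against: the paper imports this statement from \cite[5.9]{godfardHodgeStructuresConformal2024}, which in turn rests on \cite[5.4.1, 6.7.13]{bakalovLecturesTensorCategories2000}, and never reproves it. Your outline does track the strategy of those references: the Anderson--Moore--Vafa finiteness of the twist eigenvalues supplies the level $r$ needed to descend the $\PB_n$-representations to the proper stack $\Mrpo{n}{r}$, Riemann--Hilbert (plus GAGA on the proper stack, where no regular-singularity qualifier is needed) converts them into the algebraic flat bundles of a geometric braided functor, Reshetikhin--Turaev handles higher genus, and full faithfulness is a reconstruction/factorization argument reducing every $\Nu_g(\ul)$ to genus-zero pieces. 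Two caveats. First, as written your proposal is an outline rather than a proof: the entire mathematical content --- that the Reshetikhin--Turaev data satisfy the gluing, forgetful, permutation and non-degeneracy axioms of \Cref{definitiongeometricmodularfunctor}, and that a morphism of the associated categories lifts uniquely to a morphism of functors --- is explicitly deferred to \cite{bakalovLecturesTensorCategories2000}, so at this level of detail the proposal carries no more information than the citation in the theorem's header. Second, the Etingof--Penneys theorem is not where you place it: it is what makes $\mathrm{f}^b$ (and $\mathrm{f}^0$, $\mathrm{f}_c$) well defined with target $\Braid$ (resp.\ $\Rib$, $\Modcat$) rather than the weakly fusion variants; essential surjectivity itself starts from a category that is already fusion, so it is the quasi-inverse construction, not Etingof--Penneys, that does the work there.
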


\begin{remark}
  Our definitions of braided and modular functors incorporate a level $(r,s)$ or $r$, whereas those of Bakalov-Kirillov do not.
  In fact a braided or modular functor in the sense of Bakalov-Kirillov always admits a level, i.e. the monodromies around boundary
  divisors are always of finite order, see \cite[2.17, 2.18]{godfardHodgeStructuresConformal2024}.
\end{remark}

%-------------------------------------------------------------------------------------------------------------------------

\section{Flat connections on root stack versus flat connections with regular singularities}\label{sectionBK}

The aim of this section is to compare the definition of geometric functors given here to that of Bakalov-Kirillov
\cite[6.4.1]{bakalovLecturesTensorCategories2000},
and explain how,
over any field $K$ of characteristic $0$, to obtain a modular functor in the sense of Bakalov-Kirillov from
one defined on twisted moduli spaces. This explain why \Cref{mainresult} also applies to modular functors in the sense of Bakalov-Kirillov.

% comparison of the bundles from a comparison of the moduli spaces (state lemma 4 here with a proof)

In this section, we will use connection with regular singularities and specialization maps.
These notions are detailed in \Cref{appendixregularsingular}. For $D$ a normal crossing divisor in a smooth proper Deligne-Mumford
stack $X$ over a field of characteristic $0$,
we will denote by $\RS{X}{D}$ the category of bundles with flat connections on $X\setminus D$ whose singularities are regular along $D$.
When the choice of $(X,D)$ compactifying $X\setminus D$ is clear, we will abbreviate to $\RSo{X\setminus D}$.

Let $\bbV\otMgb{g}{n}$ be as in \Cref{twistedmodulispaces} and let $\tMgb{g}{n}$ be the complement of
$D=D_\partial\cup D_1\cup\dotsb\cup D_n\cup D_H$ in $\bbV\otMgb{g}{n}$. Gluing maps between the moduli spaces $\oMg{g}{n}$
and the description of the tangent bundles to components of $D_\partial$ induce specialization maps (see \cite[6.3.15]{bakalovLecturesTensorCategories2000}
and \Cref{subsectionspecialization})\footnote{These are compositions of specialization maps with pullbacks.}
\begin{align*}
  \mathrm{Sp}(q)&:\RSo{{\tMgb{g_1+g_2,K}{n_1+n_2}}}\lra \RSo{{\tMgb{g_1,K}{n_1+1}}\times\tMgb{0,K}{2}\times{\tMgb{g_2,K}{n_2+1}}},\\
  \mathrm{Sp}(p)&: \RSo{{\tMgb{g,K}{n}}}\lra\RSo{{\tMgb{g-1,K}{n+2}}\times\tMgb{0,K}{2}}.
\end{align*}

Note that $\tMgb{0,K}{2}$ is isomorphic to $\mathbb{G}_m^{2}$.
One can use these to replace pullbacks in the definition of modular functor.

\begin{remark}\label{remarkequivariance}
  Note that $\tMgb{g}{n}$ is a principle $\mathbb{G}_m^{n+1}$-bundle over $\Mg{g}{n}$.
  Hence we may consider flat bundles on $\tMgb{g}{n}$ which are $\mathbb{G}_m^{n+1}$-equivariant.
  Then, the specialization maps above preserve equivariance, as mentioned in \Cref{remarkequivarianceappendix}. For example, $\mathrm{Sp}(p)$ sends $\mathbb{G}_m^{n+1}$-equivariant flat bundles
  to $\mathbb{G}_m^{n+3}\times\mathbb{G}_m^{3}$-equivariant flat bundles ($\tMgb{0,K}{2}\simeq \mathbb{G}_m^{2}$ is a principle
  $\mathbb{G}_m^{3}$-bundle over $\Mg{0}{2}\simeq \mathrm{B}\mathbb{G}_m$).
  For simplicity, we will simply write \emph{equivariant} to mean $\mathbb{G}_m^k$-equivariant when the exponent $k$ and the $\mathbb{G}_m^k$-actions are clear.
\end{remark}

\begin{definition}[{\cite[6.4.1]{bakalovLecturesTensorCategories2000}}]\label{definitiongeometricmodularfunctorBK}
  Let $\Lambda$ be a set of colors and $K$ a field of characteristic $0$.
  Then a geometric modular functor over $K$ in the sense of Bakalov-Kirillov is the data,
  for each $(g,n)$ stable and $\ul\in\Lambda^n$,
  of an equivariant bundle with regular-singular flat connection $(\Nu_g(\ul),\nabla)$ over $\tMgb{g,K}{n}$, together with some isomorphisms
  described below:
  \begin{description}
      \item[(G-sep)] For each gluing specialization map
      \begin{equation*}
        \mathrm{Sp}(q):\RSo{{\tMgb{g_1+g_2,K}{n_1+n_2}}}\lra \RSo{{\tMgb{g_1,K}{n_1+1}}\times\tMgb{0,K}{2}\times{\tMgb{g_2,K}{n_2+1}}}
      \end{equation*}
      and each $\ul$, an equivariant isomorphism preserving the connections:
      \begin{equation*}
        \mathrm{Sp}(q)\left(\Nu_{g_1+g_2}(\lambda_1,\dotsc,\lambda_n)\right)\simeq \bigoplus_\mu\Nu_{g_1}(\lambda_1,\dotsc,\lambda_{n_1},\mu)\otimes
        \Nu_0(\mu,\mu^\dagger)^\vee\otimes
        \Nu_{g_2}(\lambda_{n_1+1},\dotsc,\lambda_n,\mu^\dagger);
      \end{equation*}
      \item[(G-nonsep)] For each gluing specialization map
      \begin{equation*}
        \mathrm{Sp}(p): \RSo{{\tMgb{g,K}{n}}}\lra\RSo{{\tMgb{g-1,K}{n+2}}\times\tMgb{0,K}{2}}
      \end{equation*}
      and each $\ul$, an equivariant isomorphism preserving the connections:
      \begin{equation*}
        \mathrm{Sp}(p)\left(\Nu_g(\lambda_1,\dotsc,\lambda_n)\right)\simeq \bigoplus_\mu\Nu_{g-1}(\lambda_1,\dotsc,\lambda_n,\mu,\mu^\dagger)
        \otimes \Nu_0(\mu,\mu^\dagger)^\vee;
      \end{equation*}
      \item[(N)] For each forgetful map $f:\tMgb{g,K}{n+1}\ra\tMgb{g,K}{n}$, and each $\ul$, an equivariant isomorphism preserving the connections:
      \begin{equation*}
        f^*\Nu_g(\lambda_1,\dotsc,\lambda_n)\simeq \Nu_g(\lambda_1,\dotsc,\lambda_n,0)
      \end{equation*}
      and a canonical equivariant isomorphism $(\Nu_0(0,0),\nabla)\simeq (\cO,d)$ (trivial flat bundle);
      \item[(Perm)] For each $\ul\in\Lambda^n$ and permutation $\sigma\in S_n$, an equivariant isomorphism:
      \begin{equation*}
        \Nu_g(\lambda_1,\dotsc,\lambda_n)\simeq \sigma^*\Nu_g(\lambda_{\sigma(1)},\dotsc,\lambda_{\sigma(n)}).
      \end{equation*}
    \end{description}
    The isomorphisms of \textbf{(G-sep)}, \textbf{(G-nonsep)}, \textbf{(N)} and \textbf{(Perm)}
    are to be compatible with each other and repeated applications.
    Moreover, we ask for the gluing to be symmetric in the sense that
    for each gluing isomorphism above, the change of variable $\mu\mapsto\mu^\dagger$
    on the right-hand side has the same effect as permuting the summands and applying the \textbf{(Perm)} isomorphisms
    $\sigma^*\Nu_0(\mu,\mu^\dagger)\simeq\Nu_0(\mu^\dagger,\mu)$ induced by $\sigma\in S_2\setminus\{\id\}$.
    
    The functor is also assumed to verify the non-degeneracy axiom:
    \begin{description}
        \item[(nonD)] For each $\lambda$, $\Nu_0(\lambda,\lambda^\dagger)\neq 0$.
    \end{description}
\end{definition}

Let us first detail the relationship between the moduli spaces at play in the $2$ definitions of modular functor.
The projection $\bbV\otMgb{g}{n}\ra D_1\cap\dotsb\cap D_n\cap D_H$ induces a projection $p:\bbV\Mgrbt{g}{n}{r}{s}\ra\Mgrbt{g}{n}{r}{s}$,
and we get a diagram:
% https://q.uiver.app/#q=WzAsNCxbMSwwLCJcXGJiVlxcTWdyYnR7Z317bn17cn17c30iXSxbMCwwLCJcXE1ncmJ0e2d9e259e3J9e3N9Il0sWzIsMCwiXFx0TWdie2d9e259Il0sWzMsMCwiXFxiYlZcXG90TWdie2d9e259Il0sWzEsMCwiaSIsMix7ImN1cnZlIjoyfV0sWzEsMCwicCIsMCx7ImN1cnZlIjotMiwic3R5bGUiOnsidGFpbCI6eyJuYW1lIjoiYXJyb3doZWFkIn0sImhlYWQiOnsibmFtZSI6Im5vbmUifX19XSxbMiwzLCJqIiwwLHsic3R5bGUiOnsidGFpbCI6eyJuYW1lIjoiaG9vayIsInNpZGUiOiJ0b3AifX19XSxbMiwwLCJqJyIsMix7InN0eWxlIjp7InRhaWwiOnsibmFtZSI6Imhvb2siLCJzaWRlIjoiYm90dG9tIn19fV1d
\[\begin{tikzcd}
	{\Mgrbt{g}{n}{r}{s}} & {\bbV\Mgrbt{g}{n}{r}{s}} & {\tMgb{g}{n}} & {\bbV\otMgb{g}{n}}
	\arrow["i"', curve={height=12pt}, from=1-1, to=1-2]
	\arrow["p", curve={height=-12pt}, tail reversed, no head, from=1-1, to=1-2]
	\arrow["{j'}"', hook', from=1-3, to=1-2]
	\arrow["j", hook, from=1-3, to=1-4]
\end{tikzcd}\]
where $j$ and $j'$ are open embeddings of complements to normal crossing divisors.

% proposition on how twisted version produces regular singular version

\begin{proposition}\label{twistedtoBK}
  Let $K$ be a field of characteristic $0$ and $\Nu$ be a geometric modular functor over $K$ with level $(r,s)$ and set of colors $\Lambda$,
  as in \Cref{definitiongeometricmodularfunctor}.
  For each $n$ and $\ul\in\Lambda^n$, denote by $\Nu^{BK}_g(\ul)$ the flat bundle $j'^*p^*\Nu_g(\ul)$ on $\tMgb{g}{n}$,
  with $j'$ and $p$ as above.
  Then the family $(\Nu^{BK}_g(\ul))_{g,\ul}$ is part of a modular functor over $K$ in the sense of Bakalov-Kirillov (\ref{definitiongeometricmodularfunctorBK}),
  whose forgetful, permutation and gluing isomorphisms are induced by those of $\Nu$.
  Moreover, the map $\Nu\mapsto \Nu^{BK}$ is compatible with field extensions $K\hookrightarrow L$.

  Appropriate variations of this statement hold for genus $0$ geometric modular functors and geometric braided functors.
\end{proposition}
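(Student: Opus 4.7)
The plan is to verify the four structural items (regular-singular flat bundle, equivariance, gluing/forgetful/permutation isomorphisms, and their compatibilities) for the family $\Nu^{BK}_g(\ul) := j'^*p^*\Nu_g(\ul)$, by transporting everything from the root-stack side through the diagram
\[
\Mgrbt{g}{n}{r}{s} \xleftarrow{p} \bbV\Mgrbt{g}{n}{r}{s} \xhookleftarrow{j'} \tMgb{g}{n} \xhookrightarrow{j} \bbV\otMgb{g}{n}.
\]
First I would observe that $p$ is a vector-bundle-like projection, so $p^*\Nu_g(\ul)$ is a flat bundle on $\bbV\Mgrbt{g}{n}{r}{s}$; restricting along $j'$ removes the exceptional locus of the root-stack map. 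By the general principle on root stacks (developed in \Cref{appendixregularsingular}), a flat bundle that extends across an order-$r$ root stack along a normal crossing divisor restricts to a regular-singular flat bundle on the complement, with monodromies whose $r$-th (respectively $s$-th) powers are trivial. This gives the regular-singular structure of $\Nu^{BK}_g(\ul)$ along $D$. Equivariance in the sense of \Cref{remarkequivariance} follows from the fact that the $\mathbb{G}_m^{n+1}$-action on $\tMgb{g}{n}$ is the restriction of the natural action on $\bbV\otMgb{g}{n}$, which lifts through both $j'$ and $p$ since the root stack construction is $\mathbb{G}_m$-equivariant in its components.

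Next I would construct the required structural isomorphisms for $\Nu^{BK}$. The permutation isomorphism \textbf{(Perm)} and the forgetful isomorphism \textbf{(N)} are obtained by straightforward functoriality: the permutation and forgetful maps between the $\Mgrbt{g}{n}{r}{s}$ lift to compatible maps on the vector bundles and restrict to the corresponding maps between the $\tMgb{g}{n}$, and pulling back the isomorphisms of \Cref{definitiongeometricmodularfunctor} along $j'\circ p^{-1}$ produces the required isomorphisms on $\tMgb{g}{n}$. The normalization $(\Nu^{BK}_0(0,0),\nabla)\simeq(\cO,d)$ is the pullback of the corresponding isomorphism from the root-stack side.

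The main obstacle, and the place where the argument really uses \Cref{appendixregularsingular}, is the gluing axioms \textbf{(G-sep)} and \textbf{(G-nonsep)}. Here one must show that applying the Bakalov--Kirillov specialization map $\mathrm{Sp}(q)$ (respectively $\mathrm{Sp}(p)$) to $\Nu^{BK}_{g_1+g_2}(\ul)$ yields a bundle canonically isomorphic to the pullback of the corresponding direct sum. The strategy is to factor $\mathrm{Sp}(q)$ through the root-stack picture: the gluing map $q$ on the twisted moduli spaces $\Mgrbt{\cdot}{\cdot}{r}{s}$ lands in the boundary stratum whose $r$-th root structure is exactly what records the local monodromy around the nodal divisor, and the specialization to the normal bundle of this divisor (studied in \Cref{subsectionspecialization}) can be read off from the $r$-th root stack via the functoriality of specialization with respect to root stacks. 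Applying this identification to the gluing isomorphism of \Cref{definitiongeometricmodularfunctor}\textbf{(G-sep)} produces the required isomorphism of regular-singular flat bundles; the analogous argument handles \textbf{(G-nonsep)}. Equivariance of $\mathrm{Sp}(q)$, as recorded in \Cref{remarkequivarianceappendix}, ensures the resulting isomorphisms are equivariant.

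Finally, the compatibilities among the four families of isomorphisms and the symmetry of gluing are inherited from the corresponding properties on the root-stack side, since pullback along $j'\circ p^{-1}$ is a functor that preserves tensor products, direct sums, and composition, and since specialization is compatible with iterated normal cones along nested boundary strata. The non-degeneracy \textbf{(nonD)} for $\Nu^{BK}_0(\lambda,\lambda^\dagger)$ is immediate since pullback is faithful. Compatibility with field extensions $K\hookrightarrow L$ is automatic because the diagram of moduli spaces, the root stack construction of \Cref{twistedmodulispaces}, and the specialization maps of \Cref{appendixregularsingular} are all defined over $\Q$ and stable under base change. Finally, the same scheme of proof, with the modifications of \Cref{braidedfunctor} and its genus $0$ variant, handles the geometric braided and genus $0$ modular functor versions.
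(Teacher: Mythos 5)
Your proposal is correct and follows essentially the same route as the paper: regularity via the root-stack lemma (\Cref{regularrootstack}), \textbf{(N)}/\textbf{(Perm)} by direct functoriality, and the gluing axioms by commuting specialization with the root stack (\Cref{specializationrootstack}) and then reading off the specialization from the pullback gluing isomorphism of \Cref{definitiongeometricmodularfunctor}. The only step you leave tacit is the one the paper isolates as \Cref{pullbackandspecialization}: since $\Nu_g(\ul)$ extends as a genuine flat bundle over the whole proper root stack, its specialization to the normal bundle of the boundary divisor is literally its pullback, which is exactly what converts the pullback isomorphism \textbf{(G-sep)} of \Cref{definitiongeometricmodularfunctor} into the Bakalov--Kirillov specialization isomorphism.
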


\begin{remark}\label{remarkBKgenuszero}
  Removing $H_{\mathcal{M}}$ from the definition of $\tMgb{0}{n}$, one gets a space $\Mgb{0}{n}$.
  Then, in the Bakalov-Kirillov definition of genus $0$ modular functor, one replaces $\tMgb{0}{n}$ by $\Mgb{0}{n}$.
  Note that as $g=0$, $\tMgb{0}{n}=\Mgb{0}{n}\times\mathbb{G}_m$. As for the corresponding Bakalov-Kirillov definition of braided functors,
  one works with the spaces $\Conf{n}{\A^1}/\mathbb{G}_a$ (see \Cref{remarkconf}).
\end{remark}

The aim of the rest of this section is to explain the proof of this proposition. The main arguments are separated into the three Lemmas
below, whose proofs are delayed to \Cref{appendixregularsingular}.

% state lemmas 1 to 3, and explain why they imply the proposition
The first step is to verify that $\Nu^{BK}_g(\ul)$ has regular singularities with respect to the embedding
$\tMgb{g}{n}\subset\bbV\otMgb{g}{n}$. By definition, it is regular with respect to the embedding
$\tMgb{g}{n}\subset\bbV\Mgrbt{g}{n}{r}{s}$. Hence regularity is provided by the following Lemma.

\begin{lemma}\label{regularrootstack}
  Let $X$ be a geometrically connected smooth separated Deligne-Mumford stack over a field $K$ of characteristic $0$.
  Let $D=D_1\cup\dotsb\cup D_n\subset X$ be a normal crossing divisor with each $D_i$ geometrically connected, possibly with normal self-crossings,
  and $r_i$, $i=1,\dotsc,n$ positive integers.
  Let $X':=X[\sum_i\frac{D_i}{r_i}]$ be the root stack, $D'=\bigcup_i\frac{D_i}{r_i}$ the divisor and $f:X'\ra X$ the projection.
  Then a connection on a bundle over $X\setminus D=X'\setminus D'$ is regular for $(X,D)$ if and only if it is regular for $(X',D')$.
  Moreover, for $(\cE,\nabla)$ regular, if $\overline{\cE}$ is an extension to $X$,
  then $f^*\overline{\cE}$ is an extension to $X'$, and if $\overline{\cE}'$ is an extension to $X'$,
  then $f_*\overline{\cE}'$ is an extension to $X$.
\end{lemma}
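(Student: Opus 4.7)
The plan is to verify the statement étale locally along $D$ (equivalently along $D'$), where the root stack admits a completely explicit description. By covering $X$ by étale charts in which each $D_i$ is a coordinate hyperplane, I reduce to the case $X=\Spec A$ with regular parameters $z_1,\dotsc,z_n,y_1,\dotsc,y_m$ and $D_i=V(z_i)$. In such a chart, $X'$ is the stack quotient $[\Spec B/G]$, where $B=A[w_1,\dotsc,w_n]/(w_i^{r_i}-z_i)$ and $G=\prod_i\mu_{r_i}$ acts by $w_i\mapsto\zeta_iw_i$, and $D'$ is cut out by $w_1\cdots w_n$. The possible self-crossings of the $D_i$ are handled by observing that the normal crossings condition is local in the étale topology and that the model above is the correct local picture at a multiple intersection.

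The main structural input is the canonical map
\begin{equation*}
  f^*\Omega^1_X(\log D)\lra \Omega^1_{X'}(\log D'),
\end{equation*}
which, in the local coordinates, sends $dz_i/z_i$ to $r_i\,dw_i/w_i$ and $dy_j$ to $dy_j$. Since each $r_i$ is invertible in characteristic $0$, this map is a canonical isomorphism. Both the forward and backward directions will use this identification together with the projection formula.

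For the forward direction, assume $\overline{\cE}$ is a coherent extension to $X$ on which $\nabla$ has log poles along $D$. Then $f^*\overline{\cE}$ is coherent on $X'$, restricts to $\cE$ on $X'\setminus D'$, and the pulled-back connection factors as
\begin{equation*}
  f^*\nabla\colon f^*\overline{\cE}\lra f^*\overline{\cE}\otimes f^*\Omega^1_X(\log D)\simeq f^*\overline{\cE}\otimes\Omega^1_{X'}(\log D'),
\end{equation*}
so $f^*\overline{\cE}$ is a log extension on $(X',D')$. For the backward direction, assume $\overline{\cE}'$ is a log extension on $(X',D')$ and consider $f_*\overline{\cE}'$. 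In characteristic $0$, taking $G$-invariants of a coherent $B$-module is exact and coherent, so $f_*\overline{\cE}'$ is a coherent sheaf on $X$ restricting to $\cE$ over $X\setminus D$ (since $f$ is an isomorphism there). Applying $f_*$ to the log connection on $\overline{\cE}'$ and using the projection formula together with the isomorphism above,
\begin{equation*}
  f_*\bigl(\overline{\cE}'\otimes\Omega^1_{X'}(\log D')\bigr)\simeq f_*\bigl(\overline{\cE}'\otimes f^*\Omega^1_X(\log D)\bigr)\simeq f_*\overline{\cE}'\otimes\Omega^1_X(\log D),
\end{equation*}
so $f_*\nabla'$ is a log connection on $f_*\overline{\cE}'$, exhibiting a log extension to $(X,D)$.

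The step I expect to be the main obstacle is the careful bookkeeping of $G$-equivariance in the local model when $D$ has self-crossings: one must verify that the local chart $[\Spec B/G]$ really represents the root stack on such a divisor (this is \cite[4.5]{chiodoStableTwistedCurves2008}) and that the projection formula and the pushforward description as $G$-invariants carry over to the Deligne-Mumford setting without modification. Once these foundational points are in place, the computations above assemble into the global statement, and the explicit formulas $f^*\overline{\cE}$ and $f_*\overline{\cE}'$ for the two extensions are immediate.
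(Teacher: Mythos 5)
Your proposal follows essentially the same route as the paper: reduce étale-locally to the chart where $X'=[\Spec B/G]$ with $B=A[w_i]/(w_i^{r_i}-z_i)$ and $G=\prod_i\mu_{r_i}$, use the identification $f^*\Omega^1_X(\log D)\simeq\Omega^1_{X'}(\log D')$ (equivalently $dz_i/z_i=r_i\,dw_i/w_i$) for the pullback direction, and describe $f_*\ocE'$ via $G$-invariants for the pushforward direction; the paper phrases the latter as the concrete computation $\nabla_{x_i\partial/\partial x_i}(m)=\tfrac{1}{r_i}\nabla_{y_i\partial/\partial y_i}(m)$ on $M^G$ rather than through the projection formula, but these are the same calculation.

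The one point you leave unaddressed is that an extension in the sense of \Cref{regularsingular} must be a \emph{bundle}, whereas you only argue that $f_*\ocE'$ is coherent. This is easily repaired with the tool you already have in hand: in characteristic $0$ the averaging map $m\mapsto\frac{1}{|G|}\sum_{g\in G}gm$ exhibits $M^G$ as an $A$-module direct summand of $M$, which is projective over $A$ since $B$ is finite free over $A$; hence $M^G$ is projective, and its rank is that of $\cE$ because $f$ is an isomorphism over $X\setminus D$. With that sentence added, your argument matches the paper's proof.
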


Vacuum and permutation isomorphisms of $\Nu$ clearly induce forgetful and permutation isomorphisms for $\Nu^{BK}$.
Let us turn to gluing isomorphisms. As a first step, the following Lemma shows that in the definition
of the gluing isomorphisms we may replace the pairs $\tMgb{g}{n}\subset\bbV\otMgb{g}{n}$
by the pairs $\tMgb{g}{n}\subset\bbV\Mgrbt{g}{n}{r}{s}$.

\begin{lemma}\label{specializationrootstack}
  Let $(X,D)$ and $(X',D')$ be as in \Cref{regularrootstack}. Let $\hat{D}_1$ be the normalization\footnote{As $D_1$ may have self-crossings, it is not necessarily smooth.}
  of $D_1$ and for $i\geq 1$, $\hat{D}_{1i}$ the pullback of $D_i\pitchfork D_1$ to $\hat{D}_1$.
  Similarly define $\hat{D}_1'$ and $\hat{D}_{1i}'$ for the pair $(X',D')$.
  Then specialization commutes with root stack, in the sense that the following diagram commutes:
  % https://q.uiver.app/#q=WzAsNCxbMCwwLCJcXFJTe1h9e0R9Il0sWzAsMSwiXFxSU3tYJ317RCd9Il0sWzIsMCwiXFxSU3tOXFxoYXR7RH1fMX17XFxoYXR7RH1fMVxcY3VwIE5cXGhhdHtEfV97MVxcbWlkXFxiaWdjdXBfaVxcaGF0e0R9X3sxaX19fSJdLFsyLDEsIlxcUlN7TlxcaGF0e0R9XzEnfXtcXGhhdHtEfV8xJ1xcY3VwIE5cXGhhdHtEfSdfezFcXG1pZFxcYmlnY3VwX2lcXGhhdHtEfV97MWl9J319Il0sWzAsMSwiIiwxLHsic3R5bGUiOnsidGFpbCI6eyJuYW1lIjoiYXJyb3doZWFkIn19fV0sWzAsMiwiXFx0ZXh0e3NwZWNpYWxpemF0aW9ufSJdLFsxLDMsIlxcdGV4dHtzcGVjaWFsaXphdGlvbn0iXSxbMiwzLCIiLDEseyJzdHlsZSI6eyJ0YWlsIjp7Im5hbWUiOiJhcnJvd2hlYWQifX19XV0=
  \[\begin{tikzcd}
    {\RS{X}{D}} && {\RS{N\hat{D}_1}{\hat{D}_1\cup N\hat{D}_{1\mid\bigcup_i\hat{D}_{1i}}}} \\
    {\RS{X'}{D'}} && {\RS{N\hat{D}_1'}{\hat{D}_1'\cup N\hat{D}'_{1\mid\bigcup_i\hat{D}_{1i}'}}}
    \arrow["{\text{specialization}}", from=1-1, to=1-3]
    \arrow[tail reversed, from=1-1, to=2-1]
    \arrow[tail reversed, from=1-3, to=2-3]
    \arrow["{\text{specialization}}", from=2-1, to=2-3]
  \end{tikzcd}\]
  where the vertical equivalences of categories are provided by \Cref{regularrootstack}.
\end{lemma}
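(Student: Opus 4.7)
The plan is to verify the commutativity by a local computation on formal neighborhoods of $\hat{D}_1$, where both the root stack operation and the specialization map are defined locally.

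Working in local étale coordinates around a generic point of $\hat{D}_1$ where $D_1 = V(z_1)$ and the other components of $D$ meeting $D_1$ are $V(z_i)$ for $i$ in some index set $I$, the root stack $X'$ is modeled by adjoining coordinates $t_i$ with $t_i^{r_i} = z_i$, and $D_i' = V(t_i)$. The key geometric identification I would set up first is that $N\hat{D}_1'$ is canonically the root stack of $N\hat{D}_1$, with order $r_1$ along the zero section $\hat{D}_1$ (so that the normal fiber coordinate $z_1$ is replaced by $t_1$ with $t_1^{r_1} = z_1$) and order $r_i$ along each $\hat{D}_{1i}$ for $i\in I$. This identification is needed even to make sense of the bottom specialization functor in relation to the top one, and it passes through $\hat{D}_1$ (rather than $D_1$ directly) precisely to avoid the self-intersection subtleties.

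Granting this, the commutativity becomes almost tautological. For a regular connection $(\cE,\nabla)$ on $X\setminus D$ with extension $\overline{\cE}$ on $X$, \Cref{regularrootstack} identifies the image under the left vertical equivalence as the same connection now viewed on $X'\setminus D'$ with extension $f^*\overline{\cE}$. By construction (see \Cref{appendixregularsingular}), the specialization of $\overline{\cE}$ is the pullback of $\overline{\cE}|_{\hat{D}_1}$ to $N\hat{D}_1$, equipped with the connection determined by $\overline{\cE}|_{\hat{D}_1}$ together with the residue of $\nabla$ along $D_1$. Performing the same construction for $f^*\overline{\cE}$ and using the canonical identification $f^*\overline{\cE}|_{\hat{D}_1'} \simeq f^*(\overline{\cE}|_{\hat{D}_1})$ yields the same underlying bundle (modulo the identification of $N\hat{D}_1'$ with the root stack of $N\hat{D}_1$). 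The two connections then agree as a consequence of the elementary identity $dz_1/z_1 = r_1 \cdot dt_1/t_1$, which multiplies the residue by $r_1$ in exactly the way compatible with the $r_1$-fold cover on the normal fiber.

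The main obstacle is the first step: making the identification of $N\hat{D}_1'$ with the appropriate root stack of $N\hat{D}_1$ canonical, functorial, and independent of the local coordinates chosen, particularly in the presence of self-crossings of $D_1$. Once that is in place, the commutativity follows from the fact that specialization is built out of restriction to $\hat{D}_1$ and the residue of the connection, both of which commute with root stack by \Cref{regularrootstack} and the residue computation above; the local agreement then glues to a global isomorphism of functors by the naturality of the constructions involved.
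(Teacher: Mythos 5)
Your overall strategy --- identify $N\hat{D}_1'$ with a root stack of $N\hat{D}_1$ and then compare the two specializations locally --- is the right frame, and your first step matches the opening observation of the paper's proof. But the "almost tautological" middle step has a genuine gap. The specialization functor is not defined using an arbitrary extension $\ocE$: as \Cref{remarkspecialization} warns, the associated graded $\bigoplus_k\cI_1^k\ocE/\cI_1^{k+1}\ocE$ genuinely depends on the choice of lattice, and independence is only established (in \Cref{lemmaspecialization}) for Deligne $\tau$-extensions. Your argument takes a $\tau$-extension $\ocE_\tau$ on $X$, passes to $f^*\ocE_\tau$ on $X'$, and specializes that. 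The identity $dz_1/z_1=r_1\,dt_1/t_1$ that you invoke is precisely what breaks this: the residue of $f^*\nabla$ along $D_1'$ is $r_1$ times the residue along $D_1$, so eigenvalues $\lambda,\lambda'$ with $\lambda-\lambda'\in\frac{1}{r_1}\Z\setminus\Z$ become eigenvalues differing by a nonzero integer, and $f^*\ocE_\tau$ is in general \emph{not} a $\tau'$-extension for any $\tau'$. Hence what you compute downstairs is not a priori the value of the bottom specialization functor on the image of $(\cE,\nabla)$ under the left-hand equivalence, and closing this gap is essentially the whole content of the lemma.

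The paper sidesteps this by running the comparison through pushforward rather than pullback: it first proves that $f_*\ocE'_{\tau'}$ of a Deligne extension on $X'$ \emph{is} a Deligne $\tau$-extension on $X$, with $\myim{\tau_i}=\frac{1}{r_i}\left(\myim{\tau_i'}+\{0,1,\dotsc,r_i-1\}\right)$ --- the local computation with the $\mu_{r_i}$-isotypic decomposition shows the residue becomes $\bigoplus_b\frac{b+B_{i,b}}{r_i}$, whose eigenvalues cannot differ by nonzero integers precisely because $0\leq b\leq r_i-1$. Division by $r_i$ preserves the Deligne condition; multiplication by $r_i$ does not. It then builds the isomorphism $\bar{f}_*\Sp_{D_1',\tau'}(\cE,\nabla)\simeq\Sp_{D_1,\tau}(\cE,\nabla)$ piece by piece on the $\mu_{r_1}$-character summands. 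To repair your version you would have to show that the non-Deligne lattice $f^*\ocE_\tau$ nevertheless computes the correct specialization over $U_{N\hat{D}_1'}$ (e.g.\ by twisting its generalized eigenspace pieces by powers of $\cO(D_1')$ and checking the associated graded is unchanged on the open part), which amounts to redoing the paper's second step.
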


See \Cref{appendixregularsingular} for notations on specialization. The gluing specialization isomorphisms for $\Nu^{BK}$ are then provided by the following
third Lemma, which is a direct consequence of the definition of the specialization map, see \Cref{subsectionspecialization}.
The compatibilities between different forgetful, permutation and gluing isomorphisms for $\Nu^{BK}$ then follow
from those of $\Nu$. This concludes the proof of \Cref{twistedtoBK}.

\begin{lemma}\label{pullbackandspecialization}
  Let $(X,D=D_1\cup\dotsb\cup D_n)$ be as in \Cref{regularrootstack}, and $\hat{D}_1$, $\hat{D}_{1i}$ as in \Cref{specializationrootstack}.
  Then the specialization to $N\hat{D}_1$ of a connection that extends to $X\setminus D_2\cup\dotsb\cup D_n$ is just its pullback.
  More precisely, the following diagram commutes:
  % https://q.uiver.app/#q=WzAsNSxbMCwwLCJcXFJTe1h9e1xcYmlnY3VwX3tpXFxnZXEgMn1EX2l9Il0sWzAsMSwiXFxSU3tYfXtcXGJpZ2N1cF97aVxcZ2VxIDF9RF9pfSJdLFsyLDAsIlxcUlN7TlxcaGF0e0R9XzF9e05cXGhhdHtEfV97MVxcbWlkIFxcYmlnY3VwX3tpXFxnZXEgMn1cXGhhdHtEfV97MWl9fX0iXSxbMiwxLCJcXFJTe05cXGhhdHtEfV8xfXtcXGhhdHtEfV8xXFxjdXAgTlxcaGF0e0R9X3sxXFxtaWRcXGJpZ2N1cF97aVxcZ2VxIDJ9XFxoYXR7RH1fezFpfX19LiJdLFsxLDAsIlxcUlN7XFxoYXR7RH1fMX17XFxiaWdjdXBfe2lcXGdlcSAyfVxcaGF0e0R9X3sxaX19Il0sWzEsMywiXFx0ZXh0e3NwZWNpYWxpemF0aW9ufSJdLFswLDEsIlxcdGV4dHtyZXN0cmljdGlvbn0iLDJdLFsyLDMsIlxcdGV4dHtyZXN0cmljdGlvbn0iXSxbMCw0LCJcXHRleHR7cHVsbGJhY2t9Il0sWzQsMiwiXFx0ZXh0e3B1bGxiYWNrfSJdXQ==
  \[\begin{tikzcd}
    {\RS{X}{\bigcup_{i\geq 2}D_i}} & {\RS{\hat{D}_1}{\bigcup_{i\geq 2}\hat{D}_{1i}}} & {\RS{N\hat{D}_1}{N\hat{D}_{1\mid \bigcup_{i\geq 2}\hat{D}_{1i}}}} \\
    {\RS{X}{\bigcup_{i\geq 1}D_i}} && {\RS{N\hat{D}_1}{\hat{D}_1\cup N\hat{D}_{1\mid\bigcup_{i\geq 2}\hat{D}_{1i}}}.}
    \arrow["{\text{pullback}}", from=1-1, to=1-2]
    \arrow["{\text{restriction}}"', from=1-1, to=2-1]
    \arrow["{\text{pullback}}", from=1-2, to=1-3]
    \arrow["{\text{restriction}}", from=1-3, to=2-3]
    \arrow["{\text{specialization}}", from=2-1, to=2-3]
  \end{tikzcd}\]
\end{lemma}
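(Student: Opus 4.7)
The plan is to unfold the construction of the specialization map $\mathrm{Sp}$ recalled in \Cref{subsectionspecialization} and observe that, under the extension hypothesis of the lemma, two of its three ingredients degenerate to pullbacks. Recall that, given $(\cE,\nabla)\in\RS{X}{D}$, the specialization $\mathrm{Sp}(\cE,\nabla)$ on $N\hat{D}_1$ is built in three steps: one first chooses a canonical logarithmic extension $(\overline{\cE},\overline{\nabla})$ across $D_1$, then restricts it to the normalization $\hat{D}_1$ to get a flat bundle with logarithmic singularities along $\bigcup_i\hat{D}_{1i}$, and finally pulls back along the projection $N\hat{D}_1\to\hat{D}_1$, twisting the resulting connection by the residue of $\overline{\nabla}$ along $D_1$ in order to encode the monodromy around $D_1$.

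Now assume $(\cE,\nabla)\in\RS{X}{\bigcup_{i\geq 2}D_i}$, i.e.\ that $(\cE,\nabla)$ already extends across $D_1$ with no singularity at all. Then the canonical logarithmic extension across $D_1$ must agree with this honest extension, and its residue along $D_1$ is zero. As a consequence the restriction to $\hat{D}_1$ is simply the pullback of $(\cE,\nabla)$ along $\hat{D}_1\to D_1\hookrightarrow X$, with no residual twist, and the subsequent pullback to $N\hat{D}_1$ yields the ordinary pullback along the composition $N\hat{D}_1\to\hat{D}_1\to X$. This is exactly the composite of restriction followed by two pullbacks traversing the right-hand side of the diagram, so the square commutes.

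The main step requiring care is the verification that the logarithmic extension prescribed by the definition of $\mathrm{Sp}$ does coincide with the honest extension when the latter already exists; once this uniqueness-of-log-extension statement is granted, the lemma reduces to the functoriality of pullback of flat bundles. The argument is compatible with the residual boundary locus $\bigcup_{i\geq 2}\hat{D}_{1i}$ on $\hat{D}_1$ and its pullback to $N\hat{D}_1$, since restricting a connection that is regular-singular along $D_i$ to $\hat{D}_1$ preserves regular-singular behavior along $\hat{D}_{1i}$, and pullback to $N\hat{D}_1$ preserves it along $N\hat{D}_{1\mid\hat{D}_{1i}}$, matching the target category appearing in the diagram. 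The only potential obstacle is thus purely bookkeeping against the definition of $\mathrm{Sp}$ in \Cref{appendixregularsingular}, and the commutativity follows by direct inspection.
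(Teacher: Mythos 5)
Your proposal is correct and matches the paper's treatment: the paper gives no separate proof of this lemma, stating only that it is a direct consequence of the definition of the specialization map in \Cref{subsectionspecialization}, and your argument is exactly that verification --- the honest extension across $D_1$ is the canonical $\tau$-extension (for $0\in\myim{\tau_1}$) by uniqueness, its residue along $D_1$ vanishes, and hence the associated graded $\bigoplus_k\cI_1^k\ocE/\cI_1^{k+1}\ocE$ with its induced connection is precisely the pullback of $(\cE,\nabla)$ along $N\hat{D}_1\to\hat{D}_1\to X$.
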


%-------------------------------------------------------------------------------------------------------------------------

\section{Proof that geometric functors are defined over number fields}\label{sectionmodular}

%-------------------------------------------------------------------------------------------------------------------------

\subsection{Statement of the main result}

\begin{theorem}\label{mainresult}
  Let $\Nu$ be a geometric modular functor over $\C$, and let $(r,s)$ be a level for it.
  Then there exists a number field $K$
  such that $\Nu$ can be defined over $K$ in the sense of \Cref{definitiongeometricmodularfunctor},
  i.e. each bundle with flat connection and gluing, forgetful and permutation isomorphism defining $\Nu$
  can be defined over the twisted moduli spaces of curves $\Mgrbt{g}{n}{r}{s}_K$ over the field $K$.

  The same results hold for genus $0$ geometric modular functors and geometric braided functors.
\end{theorem}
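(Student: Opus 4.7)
The plan is to adapt the short proof sketched in the introduction for a single rigid semisimple flat bundle to the case of an entire modular/braided functor. Given $\Nu$ a geometric modular functor defined over $\C$, we know $\Nu$ is built from infinitely many bundles $(\Nu_g(\underline{\lambda}),\nabla)$ together with compatibility isomorphisms (gluing, forgetful, permutation). A naive attempt to find a single finitely generated $\Q$-subalgebra $A\subset \C$ of definition fails because there are infinitely many data to define simultaneously. I would address this exactly as suggested in the outline: pass first to a truncation containing only finitely many bundles and finitely many compatibility isomorphisms, prove the theorem for this truncation, and then bootstrap back.

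More concretely, I would fix an integer cutoff (bounding the genus $g$, the number of marked points $n$, and restricting to a finite subset of the color set $\Lambda$) and consider the resulting truncated geometric functor $\Nu^{\leq N}$. Because such a truncation involves only finitely many bundles with flat connection and finitely many isomorphisms between pullbacks, it can be defined over some $\Q$-subalgebra $A\subset \C$ of finite type, which after shrinking we may assume smooth. Spreading out gives a family of truncated functors $\Nu_t^{\leq N}$ on the twisted moduli spaces parametrized by $t\in B:=(\Spec A\otimes_\Q\C)^{\mathrm{an}}$. The crucial input is Ocneanu rigidity, which, as stated in the introduction (referenced as \Cref{forgetsoverC}), implies that truncations of modular/braided functors admit no non-trivial infinitesimal deformations. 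Thus each $\Nu_t^{\leq N}$ is isomorphic to $\Nu^{\leq N}$ for every $t\in B$. Choosing a $K$-point $t_0$ of $B$ for a suitable number field $K$, the specialization $\Nu^{\leq N}_{t_0}$ is defined over $K$ and isomorphic to $\Nu^{\leq N}$.

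It remains to upgrade this from the truncation to the full modular functor. The idea is to apply \Cref{fullfaithfulnesses}, or rather its field-of-definition analogue over a number field: for $K$ a number field of characteristic zero, a geometric (modular, genus $0$ modular or braided) functor over $K$ is determined by, and can be recovered from, the associated (modular, ribbon, braided) fusion category, or equivalently from a sufficiently large truncation. Combined with Ocneanu rigidity (which says that the fusion category itself admits a descent to a number field), this ensures that having the truncation $\Nu^{\leq N}$ defined over $K$ for $N$ large enough implies that every other bundle $\Nu_g(\underline{\lambda})$, and every compatibility isomorphism, is also realized over $K$ (possibly after a finite field extension, which remains a number field). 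The forgetful, gluing and permutation compatibilities in higher genus or with more markings are forced by those already defined over $K$, via the gluing and forgetful axioms of \Cref{definitiongeometricmodularfunctor}.

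I expect the main obstacle to lie in the extension from truncation to full functor, i.e.\ formalizing the claim that once a sufficiently large truncation is defined over $K$, so is the entire functor. This amounts to a careful bookkeeping of how each bundle $(\Nu_g(\underline{\lambda}),\nabla)$ and each gluing/forgetful/permutation isomorphism is reconstructed algebraically from a finite portion of the data over $K$. The truncation-rigidity step (Ocneanu rigidity plus the spreading-out argument on $B$) is technically cleanest and essentially routine once the correct category of deformations is identified. Ensuring compatibility with field extensions $K\hookrightarrow L$ and with the Bakalov--Kirillov formulation (needed for the parallel statement of \Cref{mainresultBK}) should follow from \Cref{twistedtoBK} applied over $K$.
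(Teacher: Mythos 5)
Your Step 1 (truncate, spread out over a finitely generated smooth $\Q$-algebra, apply Ocneanu rigidity to conclude all fibers of the family are isomorphic, then specialize at a $K$-point) is exactly the paper's first step; the only cosmetic difference is that the paper truncates by Euler characteristic $2-2g-n\geq\chi$ and applies rigidity to the family of associated fusion categories $C_t$ before transferring back to truncated functors via the equivalence of \Cref{forgetsoverC}.

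The gap is in your passage from the truncation to the full functor. You invoke a ``field-of-definition analogue over a number field'' of \Cref{fullfaithfulnesses}, i.e.\ that a geometric functor over $K$ can be \emph{recovered} from a sufficiently large truncation over $K$. But essential surjectivity of the truncation functor over a number field is precisely the statement to be proved; the paper only establishes \emph{full-faithfulness} of $\GMod_c(K)\to\GMod_c^{\geq\chi}(K)$ for subfields $K\subset\C$ (\Cref{forgetsoverK}, via an $\Aut{\C/K}$-fixed-point argument), not essential surjectivity. Worse, your allowance of ``possibly after a finite field extension'' at the extension step is fatal as stated: one must extend through infinitely many truncation levels $\chi\to\chi-1\to\cdots$, and if the field is permitted to grow at each stage the union need not be a number field. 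The paper's resolution is an inductive gluing argument: given $\Nu^{\geq\chi}_K$ over $K$, Step 1 produces an $L$-form of $\Nu^{\geq\chi-1}$ for some larger number field $L$; \Cref{isomorphismdescent} arranges (after a finite extension of $L$) that its $\chi$-truncation is isomorphic to the pullback of $\Nu^{\geq\chi}_K$; and \Cref{Galoisdescent} then uses the Galois descent datum induced from $\Nu^{\geq\chi}_K$, transported to the $(\chi-1)$-truncation by full-faithfulness over $L$, to descend the whole $(\chi-1)$-truncation back to the \emph{original} field $K$. Without this descent mechanism your argument does not produce a single number field of definition for all of $\Nu$.
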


% mention how this reduces the proof of the theorem to the twisted case
Compatibility with field extensions in \Cref{twistedtoBK} and \Cref{mainresult} then imply the following.

\begin{corollary}\label{mainresultBK}
  Let $\Cat$ be a modular category. Then there exists a number field $K$ such that
  the corresponding geometric modular functor in the sense of Bakalov-Kirillov can be defined over $K$ (see \Cref{definitiongeometricmodularfunctorBK}),
  i.e. each bundle with flat connection, gluing specialization isomorphism, and forgetful and permutation isomorphism defining it
  can be defined over the moduli spaces $\tMgb{g,K}{n}$ over the field $K$.

  The same results hold for genus $0$ geometric modular functors coming from ribbon fusion categories and geometric braided functors coming from
  braided fusion categories.
\end{corollary}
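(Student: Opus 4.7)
The plan is to adapt the finite-dimensional rigidity argument sketched in the introduction: one cannot immediately spread $\Nu$ out over a $\Q$-subalgebra of finite type because $\Nu$ consists of infinitely many flat bundles and structural isomorphisms, so I would first isolate a finite portion of $\Nu$ which nonetheless determines it. Concretely, I would define truncations $\Nu^{\leq N}$ retaining only the bundles $\Nu_g(\ul)$ together with the gluing, forgetful and permutation isomorphisms involving indices $(g,n)$ of bounded complexity, and establish that no information is lost: for $N$ sufficiently large the truncation functor $\Nu \mapsto \Nu^{\leq N}$ is fully faithful, so two functors with isomorphic truncations are themselves isomorphic. Over $\C$ this fullfaithfulness is essentially the content of \Cref{fullfaithfulnesses}, since a sufficiently rich truncation already encodes the data of the associated braided/modular fusion category.

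Second, I would prove deformation rigidity of the truncation: any flat family of truncated geometric functors over a connected base $S$ has all fibers pairwise isomorphic. This is where Ocneanu rigidity enters, as the associated fusion/modular category attached to each fiber has no non-trivial deformations, and one must upgrade this to rigidity of the additional data of the finitely many bundles and connections constituting the truncation. I would do this by exploiting the close relationship between truncated functors and categorical data established via \Cref{fullfaithfulnesses}, so that rigidity of the associated category promotes to rigidity of the truncated functor.

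With these two inputs, the argument outlined in the introduction goes through: $\Nu^{\leq N}$ is defined by finitely many algebraic bundles with flat connection and finitely many isomorphisms over $\Mgrbt{g}{n}{r}{s}_\C$, hence by standard spreading out is defined over a smooth finite-type $\Q$-subalgebra $A \subset \C$, producing a flat family over $B = (\Spec A \otimes_\Q \C)^{\an}$. Deformation rigidity implies all fibers $\Nu^{\leq N}_t$ are isomorphic to $\Nu^{\leq N}$; picking a $K$-point $t_0 \in B$ for some number field $K$ gives a $K$-form of $\Nu^{\leq N}$. Finally, I would promote this to a $K$-form of $\Nu$ itself by invoking an arithmetic refinement of the fullfaithfulness result: the reconstruction producing a full geometric functor from its truncation is defined over $K$, so a $K$-form of the truncation descends uniquely to a $K$-form of $\Nu$ whose base change to $\C$ recovers $\Nu$. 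The statements for genus $0$ modular functors and braided functors follow by the same argument using the corresponding equivalences $\mathrm{f}^0$ and $\mathrm{f}^b$.

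The main technical obstacle, as anticipated by the author's outline, lies in the last step: establishing fullfaithfulness of truncation and the associated reconstruction over an arbitrary number field $K$, rather than just over $\C$ where one invokes the Reshetikhin-Turaev construction through \Cref{fullfaithfulnesses}. This requires a descent argument showing that the equivalences between functors and their categorical truncations are compatible with base change and Galois-equivariant; concretely, one must verify that the structural morphisms reconstructed from the truncation respect the $K$-structure, which amounts to checking that the reconstruction procedure is functorial in the base field.
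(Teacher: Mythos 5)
Your proposal is, in substance, a sketch of the proof of \Cref{mainresult} (truncation, spreading out, Ocneanu rigidity, Galois descent of the tower of truncations), and on that front it follows the paper's strategy closely. But the statement you were asked to prove is \Cref{mainresultBK}, and the entire content specific to that corollary is missing from your argument. The Bakalov--Kirillov functor does not live on the twisted compactifications $\Mgrbt{g}{n}{r}{s}$ where your construction takes place: it consists of \emph{regular-singular} flat bundles on the open moduli spaces $\tMgb{g,K}{n}$, and its gluing data are \emph{specialization} isomorphisms along the boundary (via $\mathrm{Sp}(q)$, $\mathrm{Sp}(p)$), not pullbacks along gluing maps of root stacks. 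Your proof ends with finitely many bundles and isomorphisms over $\Mgrbt{g}{n}{r}{s}_\C$ descended to $K$, and never explains how a $K$-form of that object yields a $K$-form of the Bakalov--Kirillov functor.

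In the paper this translation is \Cref{twistedtoBK}: one sets $\Nu^{BK}_g(\ul)=j'^*p^*\Nu_g(\ul)$ and must check (i) that the resulting connection is regular-singular for the embedding $\tMgb{g}{n}\subset\bbV\otMgb{g}{n}$, which requires knowing that regularity is insensitive to passing between a pair $(X,D)$ and its root stack $(X',D')$ and that extensions push forward and pull back (\Cref{regularrootstack}); (ii) that the pullback gluing isomorphisms of $\Nu$ induce gluing \emph{specialization} isomorphisms for $\Nu^{BK}$, which rests on the compatibility of specialization with root stacks (\Cref{specializationrootstack}) and the fact that specialization of a connection extending across the divisor is just pullback (\Cref{pullbackandspecialization}); and (iii) that the assignment $\Nu\mapsto\Nu^{BK}$ commutes with field extension $K\hookrightarrow L$, which is what lets the $K$-form produced by \Cref{mainresult} descend to a $K$-form of the Bakalov--Kirillov functor. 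These points occupy all of Section 3 and the appendix, and none of them is automatic; in particular, well-definedness of the specialization map (independence of the Deligne $\tau$-extension) over a non-algebraically closed field is itself a nontrivial input. A complete proof of the corollary should cite \Cref{mainresult} (together with the essential surjectivity in \Cref{fullfaithfulnesses} to pass from the given modular category $\Cat$ to a geometric modular functor over $\C$) and then invoke the field-extension compatibility of \Cref{twistedtoBK}, rather than re-proving the main theorem.
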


%-------------------------------------------------------------------------------------------------------------------------

\subsection{Truncated geometric functors}

\begin{definition}
  Let $\chi\leq -3$ be an integer.
  A $\chi$-truncated geometric modular functor with level $(r,s)$ and set of colors $(\Lambda,\dagger)$
  is the data, for each $g,n\geq 0$ \textbf{such that} $\bm{2-2g-n\geq \chi}$ and $\ul\in \Lambda^n$, of a bundle with flat connection
  $(\Nu_g(\ul),\nabla)\ra \Mgrbt{g}{n}{r}{s}$, together with the data of relevant gluing, forgetful and permutation isomorphisms
  satisfying the restriction of axioms of a modular functor to $2-2g-n\geq \chi$.

  One similarly defines $\chi$-truncated genus $0$ geometric modular functors and $\chi$-truncated geometric braided functors.
\end{definition}

Note that a $\chi$-truncated geometric functor comprises of finite data, i.e. of finitely many bundles and
gluing/forgetful/permutation isomorphisms.

\subsection{Full-faithfulness of forgetful functors over subfields of \texorpdfstring{$\C$}{complex numbers}}\label{subsectionsubfields}

\begin{notation}
  Let $K$ be a field of characteristic $0$ and $c$ be a central charge.
  We will denote by $\GMod_c^{\geq \chi}(K)$ the groupoid of $\chi$-truncated geometric modular functors
  defined over $K$ and with central charge $c$, by $\GMod^{0,\geq \chi}(K)$ the groupoid of $\chi$-truncated genus $0$ geometric modular functors over $K$,
  and by $\BraidFun^{\geq \chi}$ the groupoid of $\chi$-truncated geometric braided functors over $K$.
\end{notation}

\begin{theorem}[{\cite[5.11]{godfardHodgeStructuresConformal2024}}]\label{forgetsoverC}
  Fix $c\in\C^\times$. Then for any $\chi'\leq \chi\leq -3$, the following forgetful functors are
  well-defined and equivalences:
  \begin{equation*}
    \GMod_c(\C)\lra \GMod_c^{\geq \chi'}(\C)\lra \GMod_c^{\geq \chi}(\C)\lra \Modcat.
  \end{equation*}
  The same result holds for genus $0$ geometric modular functors and geometric braided functors, replacing $\Modcat$ by $\Rib$ and $\Braid$ respectively.
\end{theorem}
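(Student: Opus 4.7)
The plan is to leverage \Cref{fullfaithfulnesses}, which already gives that $\mathrm{f}_c \colon \GMod_c(\C)\to\Modcat$ is fully faithful with essential image equal to the modular fusion categories of central charge $c$. Since $\mathrm{f}_c$ factors through each arrow in the chain, every forgetful functor in the statement is automatically faithful. The content is to promote this to an equivalence, which I would package into three steps: (a) define a functor $\mathrm{f}_c^{\geq\chi}\colon \GMod_c^{\geq\chi}(\C)\to\Modcat$ factoring $\mathrm{f}_c$; (b) prove $\mathrm{f}_c^{\geq\chi}$ is fully faithful; (c) use Reshetikhin-Turaev/Bakalov-Kirillov to get essential surjectivity.

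For (a), the threshold $\chi\leq -3$ is exactly what is needed to extract the ribbon fusion data. Spheres with $3$, $4$, and $5$ marked points (Euler characteristic $-1,-2,-3$) carry the Hom spaces, associator, and pentagon relation; the braiding and hexagon come from the sphere with $4$ marked points; the twist and the modularity data come from bundles over $\Mg{1}{1}$ and $\Mg{1}{2}$ (Euler characteristic $-1$ and $-2$). So $\mathrm{f}_c^{\geq\chi}$ is built exactly as $\mathrm{f}_c$ is, using only truncated data.

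For (b), given two $\chi$-truncated functors $\Nu$ and $\Nu'$ and an isomorphism of their associated modular categories, one must produce an isomorphism $\Nu\simeq\Nu'$ of truncated functors. On the low-complexity bundles, the isomorphism is the one coming from (a). For a bundle $\Nu_g(\ul)$ with $\chi\leq 2-2g-n$ but not minimal, choose a maximally degenerate boundary stratum of $\Mgrbt{g}{n}{r}{s}$ corresponding to a pants decomposition; by iterated application of \textbf{(G-sep)} and \textbf{(G-nonsep)}, the bundle decomposes along this stratum as a direct sum of tensor products of the atomic $\Nu_0(\mu_1,\mu_2,\mu_3)$. The isomorphism on the boundary stratum is then determined by the low-complexity data, and extends uniquely to the interior because flat connections on a connected space are determined by their restriction to any nonempty open set. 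One checks that the extension is compatible with \textbf{(N)}, \textbf{(Perm)}, and every other gluing because these compatibilities reduce on the chosen boundary stratum to compatibilities among the low-complexity bundles, where they hold by hypothesis. Functoriality of the construction yields the full-faithfulness of the truncation maps $\GMod_c(\C)\to\GMod_c^{\geq\chi'}(\C)\to\GMod_c^{\geq\chi}(\C)$ as well.

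For (c), any modular fusion category in the essential image of $\mathrm{f}_c$ admits a preimage $\Nu\in\GMod_c(\C)$ by \Cref{fullfaithfulnesses}; its truncation lies in $\GMod_c^{\geq\chi}(\C)$ and maps to the given category. Combined with (a) and (b), this shows every arrow in the chain is fully faithful and surjective onto the essential image of $\mathrm{f}_c$, hence an equivalence. The genus $0$ modular and braided cases follow identically, with $\Rib$ and $\Braid$ in place of $\Modcat$, noting that essential surjectivity onto all of $\Rib$ or $\Braid$ is supplied by the corresponding parts of \Cref{fullfaithfulnesses}. The main obstacle is step (b): the extension of an isomorphism from the finitely many truncated bundles to bundles of arbitrary complexity has to be done coherently in both the $\chi$-truncated source and the $\Modcat$-target, and I expect the cleanest write-up to proceed by induction on $-\chi$, reducing each new bundle to data already controlled at the previous step via a chosen pants decomposition and the symmetry of the gluing axiom.
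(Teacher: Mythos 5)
The paper does not prove this statement itself: it is imported verbatim from \cite[5.11]{godfardHodgeStructuresConformal2024}, so there is no internal proof to compare against. Your outline nevertheless follows the strategy that reference (and Bakalov--Kirillov before it) actually uses, and steps (a) and (c) are essentially fine: the threshold $\chi\leq -3$ is indeed dictated by needing the five-punctured sphere for the pentagon, and essential surjectivity is supplied by the Reshetikhin--Turaev/Bakalov--Kirillov construction via \Cref{fullfaithfulnesses}.

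The genuine gap is in step (b), at the sentence ``extends uniquely to the interior because flat connections on a connected space are determined by their restriction to any nonempty open set.'' That gives \emph{uniqueness} of the extension, not \emph{existence}. Your candidate isomorphism is a flat section of $\mathcal{H}om(\Nu_g(\ul),\Nu_g'(\ul))$ defined near one maximally degenerate boundary stratum; it extends over all of $\Mgrbt{g}{n}{r}{s}$ only if it is invariant under the full monodromy of that Hom-bundle, i.e.\ under the whole (truncated) mapping class group, which is not generated by loops visible near a single pants decomposition. Equivalently, one must show the construction is independent of the chosen pants decomposition. This is precisely the content of the Moore--Seiberg/Lego--Teichm\"uller presentation: any two pants decompositions are related by elementary moves, and all relations among those moves are supported on surfaces with $2-2g-n\geq -3$ (five-punctured sphere, twice-punctured torus, genus-two surface, etc.), which is the real reason the $-3$ threshold suffices. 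Your closing claim that the compatibilities ``reduce on the chosen boundary stratum to compatibilities among the low-complexity bundles'' is the right intuition but, as written, is circular: that reduction is exactly what the Lego--Teichm\"uller theorem provides, and it must be invoked (or reproved) rather than asserted. You correctly flag step (b) as the main obstacle; this is the ingredient needed to close it.
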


\begin{remark}
  Note that $\chi\leq -3$ is required for $\GMod_c^{\geq \chi}(\C)\lra \Rib$ to be defined.
\end{remark}

\begin{corollary}\label{forgetsoverK}
  Fix $K$ a subfield of $\C$ and $c$ a central charge.
  Then for any $\chi'\leq \chi\leq -3$, the following forgetful functors are fully faithful:
  \begin{equation*}
    \GMod_c(K)\lra \GMod_c^{\geq \chi'}(K)\lra \GMod_c^{\geq \chi}(K).
  \end{equation*}
  Again, the same holds for genus $0$ geometric modular functors and geometric braided functors.
\end{corollary}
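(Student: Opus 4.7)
The plan is to reduce to \Cref{forgetsoverC} by combining it with a standard linear-algebra descent, exploiting the injectivity of $K$-linear maps upon base change $-\otimes_K\C$ and the fact that all equations defining the $\mathrm{Hom}$-sets in $\GMod_c$, $\GMod^0$ and $\BraidFun$ are $K$-linear with $K$-coefficients.

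For faithfulness, suppose $\tilde\phi,\tilde\psi:\Nu\to\Nu'$ in $\GMod_c(K)$ share a common $\chi$-truncation. Their base changes to $\C$ then lift the common $\phi_\C$, hence agree by the faithfulness part of \Cref{forgetsoverC}. Since $\mathrm{Hom}_K(\cE,\cE')\hookrightarrow\mathrm{Hom}_K(\cE,\cE')\otimes_K\C=\mathrm{Hom}_\C(\cE_\C,\cE'_\C)$ for any two bundles with flat connection on a $K$-stack, the componentwise equality over $\C$ forces $\tilde\phi=\tilde\psi$.

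For fullness, given $\phi\in\GMod_c^{\geq\chi}(K)$, \Cref{forgetsoverC} produces a unique lift $\tilde\phi_\C\in\GMod_c(\C)$ of $\phi_\C$. The additional data consists of isomorphisms $\tilde\phi_{g,\ul}$ for $2-2g-n<\chi$, subject to gluing, forgetful, and permutation compatibilities with $\phi$ and with each other. These are $K$-linear in the unknowns $\tilde\phi_{g,\ul}$ with $K$-coefficients. Ordering the $(g,\ul)$ so that at each step the newly introduced component appears in compatibility equations whose remaining entries are already constructed, one faces a finite-dimensional inhomogeneous $K$-linear system at each step. The elementary descent fact that an inhomogeneous $K$-linear system with $K$-coefficients and $K$-right-hand side has a solution in $V_K$ iff it has one in $V_K\otimes_K\C$ (because $\myim(A_K)\otimes_K\C\cap V_K=\myim(A_K)$), combined with uniqueness of $\tilde\phi_\C$, yields a unique $K$-solution at each step. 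The assembled $\tilde\phi\in \GMod_c(K)$ then satisfies all remaining modular-functor compatibilities because these are $K$-linear equations that hold over $\C$.

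The main subtlety is the inductive organization: one must arrange the $(g,\ul)$ so that the dependencies among the compatibility equations make each step a well-posed finite descent. The argument for genus $0$ modular and braided functors is identical, since \Cref{forgetsoverC} covers all three cases. Finally, the full-faithfulness of the second functor $\GMod_c^{\geq\chi'}(K)\to\GMod_c^{\geq\chi}(K)$ follows by applying the same descent with source $\GMod_c^{\geq\chi'}(K)$, using again that this functor is an equivalence over $\C$ by \Cref{forgetsoverC}.
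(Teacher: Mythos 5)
Your argument is workable in outline but takes a genuinely different route from the paper, and the paper's route is considerably cleaner. For fullness the paper argues as follows: by faithfulness over $\C$ (\Cref{forgetsoverC}) the lift $\tilde f$ of $f$ to $\GMod_c(\C)$ is \emph{unique}; the group $\Aut{\C/K}$ acts on $\GMod_c(\C)$ fixing $\Nu^1$, $\Nu^2$ and $f$ (all defined over $K$), so $\sigma(\tilde f)$ is again a lift of $f$ and hence equals $\tilde f$ for every $\sigma$; since the fixed points of $\Aut{\C/K}$ in $\C$ are exactly $K$, all coefficients of $\tilde f$ lie in $K$. This one-step invariance argument completely sidesteps the inductive ordering of the components $(g,\ul)$ and the level-by-level linear systems that your descent requires — precisely the bookkeeping you yourself flag as "the main subtlety". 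Your faithfulness argument coincides with the paper's.

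There is also a soft spot in your induction as written. You appeal to "uniqueness of $\tilde\phi_\C$" to conclude that each step produces a unique $K$-solution, but uniqueness of the \emph{full} lift over $\C$ does not by itself imply that the linear system at an intermediate step (with only the lower components fixed) has a unique solution over $\C$. If it did not, the $K$-solution you select could differ from the restriction of $\tilde\phi_\C$, and the system at the next step could then fail to be solvable even over $\C$. What rescues the argument is a step-wise uniqueness statement: every stable $(g,n)$ with $2-2g-n<\chi\leq -3$ admits a gluing map $q$ onto a nonempty boundary divisor, and pullback along $q$ is injective on flat sections of the relevant Hom-bundle (a flat section vanishing on a nonempty substack of a connected stack vanishes identically), so the gluing compatibility determines each new component $\tilde\phi_{g,\ul}$ uniquely from the already-constructed data. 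You should either supply this point explicitly or adopt the $\Aut{\C/K}$-invariance argument, which needs neither the ordering nor any intermediate uniqueness.
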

\begin{proof}
  Faithfulness is a consequence of the same result over $\C$, which is contained in \Cref{forgetsoverC}.
  Let us prove fullness of $\GMod_c(K)\ra \GMod_c^{\geq \chi}(K)$.
  The other cases are similar.
  Let $\Nu^1$ and $\Nu^2$ be $2$ objects of $\GMod_c(K)$ and let $\Nu^{1,\geq\chi}$ and $\Nu^{2,\geq\chi}$
  denote their images in $\GMod_c^{\geq \chi}(K)$. Choose a morphism $f:\Nu^{1,\geq\chi}\ra\Nu^{2,\geq\chi}$ in $\GMod_c^{\geq \chi}(K)$.
  By \Cref{forgetsoverC}, $f$ lifts to a morphisms $\tilde{f}:\Nu^1\ra\Nu^2$ in $\GMod_c(\C)$. We want to show that it is
  in $\GMod_c(K)$. The group $\Aut{\C/K}$ acts on $\GMod_c(\C)$ and fixes $\Nu^1$ and $\Nu^2$, as they are defined over $K$.
  For any $\sigma$ in $\Aut{\C/K}$, $\sigma(\tilde{f})$ is a morphism $\Nu^1\ra\Nu^2$ lifting $f$.
  By faithfulness in \Cref{forgetsoverC}, one must have $\sigma(\tilde{f})=\tilde{f}$ for all $\sigma$ in $\Aut{\C/K}$.
  Now, $\tilde{f}$ can be seen as a family of isomorphisms over $\C$ between sheaves of $K$-vector spaces and hence is defined
  by some coefficients in $\C$. As the fixed points in $\C$ of $\Aut{\C/K}$ are exactly $K$, all these coefficients must be in $K$.
  Hence $\tilde{f}$ is defined over $K$.
\end{proof}

%-------------------------------------------------------------------------------------------------------------------------

\subsection{Deformations of modular functors}

The main ingredient in the proof of our main result (\Cref{mainresult}) is Ocneanu rigidity, which we now state, and a textbook account of which is available in
\cite[chp. 9.1]{etingofTensorCategories2015}.

\begin{theorem}[{Ocneanu rigidity \cite[2.28]{etingofFusionCategories2005}}]\label{Ocneanurigidity}
  A fusion category does not admit nontrivial infinitesimal deformations (i.e. its associator does not).
  In particular, the number of such categories up to equivalence with a given Grothendieck ring is finite.
\end{theorem}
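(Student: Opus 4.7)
The plan is to deduce Ocneanu rigidity from the vanishing of a suitable deformation cohomology, following the Davydov--Yetter framework. For $\Cat$ a fusion category, set $C^n := \mathrm{End}(\mathrm{Id}_\Cat^{\boxtimes n})$, the space of natural endotransformations of the $n$-fold external product of the identity endofunctor of $\Cat$, equipped with the standard tensor-insertion differential making it into a cochain complex. A direct linearization of the pentagon axiom shows that first-order deformations of the associator, modulo infinitesimal gauge transformations, are classified by $H^3$ of this complex, and that the obstruction to lifting a first-order deformation to second order lies in $H^4$. The main task is thus to prove $H^3 = H^4 = 0$.

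For the vanishing I would reinterpret Davydov--Yetter cohomology as a bimodule $\mathrm{Ext}$ group. Concretely, $\Cat$ is a module category over its enveloping tensor category $\Cat \boxtimes \Cat^{\mathrm{op}}$, and a standard computation identifies
\[
  H^n_{DY}(\Cat) \simeq \Ext{n}{\Cat \boxtimes \Cat^{\mathrm{op}}}{\Cat}{\Cat},
\]
where the Ext groups are taken in the $2$-category of module categories. Because $\Cat$ is semisimple and rigid, one verifies that $\Cat$ is a semisimple module category over $\Cat \boxtimes \Cat^{\mathrm{op}}$, whence these Ext groups vanish in positive degrees. This gives infinitesimal rigidity of the associator: every first-order deformation is a coboundary, and the same argument applied order by order shows that every formal deformation is gauge-equivalent to the trivial one.

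To derive the finiteness statement, fix a based ring $R$ of fusion type and consider the affine $\C$-scheme $V_R$ cut out in a finite-dimensional vector space by the pentagon equations, together with the algebraic action of the gauge group $G_R$ of tensor autoequivalences of the identity functor. Equivalence classes of fusion categories with Grothendieck ring $R$ correspond to $G_R$-orbits on the locus of non-degenerate associators. Infinitesimal rigidity means that the tangent space to each $G_R$-orbit fills the tangent space of $V_R$, so every orbit is open; since $V_R$ is of finite type over $\C$, only finitely many orbits can fit inside it. The hard step I expect is Step 2, that is, establishing the Ext-interpretation of the Davydov--Yetter complex and the semisimplicity of $\Cat$ as a $\Cat \boxtimes \Cat^{\mathrm{op}}$-module category. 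Once this structural input is in place, both the cohomological vanishing and the geometric finiteness argument are essentially formal.
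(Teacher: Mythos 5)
The paper does not prove this statement: it is quoted from Etingof--Nikshych--Ostrik \cite[2.28]{etingofFusionCategories2005}, with a pointer to \cite[ch.~9.1]{etingofTensorCategories2015} for a textbook account, and enters only as an external input to \Cref{corollarydeformations} and the main theorem. Your sketch is, in outline, the standard Davydov--Yetter argument given in those references, so there is no divergence of strategy to report; but two points in your write-up are either off or conceal the real content. First, the cochain groups of the Davydov--Yetter complex are $C^n=\mathrm{End}(\otimes^{(n)})$, the natural endomorphisms of the $n$-fold tensor product functor $\Cat^{\boxtimes n}\to\Cat$, not of $\mathrm{Id}_\Cat^{\boxtimes n}$: endomorphisms of the identity functor of $\Cat^{\boxtimes n}$ form a commutative algebra on which your ``tensor-insertion differential'' does not exist. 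Second, ``$\mathrm{Ext}$ groups taken in the $2$-category of module categories'' is not a well-formed notion; the correct identification is $H^n_{DY}(\Cat)\simeq \mathrm{Ext}^n_{Z(\Cat)}(\mathbf{1},\mathbf{1})$, where $Z(\Cat)\simeq \mathrm{Fun}_{\Cat\boxtimes\Cat^{\mathrm{op}}}(\Cat,\Cat)$ is the Drinfeld center, and the vanishing requires semisimplicity of $Z(\Cat)$, not merely of $\Cat$ as a module category over $\Cat\boxtimes\Cat^{\mathrm{op}}$. That $Z(\Cat)$ is semisimple (equivalently, that the dual of a fusion category with respect to an indecomposable module category is again fusion) is itself one of the main theorems of \cite{etingofFusionCategories2005} in characteristic $0$ and rests on the nonvanishing of the global dimension of a fusion category; your ``one verifies'' hides exactly this step. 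The deformation-theoretic interpretation of $H^3$ and $H^4$ and the open-orbit finiteness argument at the end are correct as stated.
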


As a corollary of \Cref{Ocneanurigidity} we get.

\begin{corollary}\label{corollarydeformations}
  Let $C$ be a ribbon or braided category over $\C$.
  Then for any continuous family of ribbon or braided fusion categories $(C_t)_{t\in X}$, with $C_0=C$, $X$ path-connected,
  and where only the associators vary, $C_t$ is isomorphic to $C$ for all $t$.
\end{corollary}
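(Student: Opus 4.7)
The plan is to show that the isomorphism-class map $\Phi\colon X \to \{\text{iso.~classes of ribbon/braided fusion categories}\}$, $t \mapsto [C_t]$, is locally constant; combined with path-connectedness of $X$, this yields $[C_t] = [C_0] = [C]$ for all $t$.

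First, I would reduce to a fixed Grothendieck ring. Because only the associators vary, all the $C_t$ share the same underlying $\C$-linear category, tensor product of objects, braiding, and twist; in particular they have a common Grothendieck ring $R$. Isomorphism classes of ribbon (resp.~braided) fusion categories with Grothendieck ring $R$ are naturally in bijection with orbits of the gauge group $G_R$ of tensor-invertible natural transformations acting on the affine variety $A_R$ of associators satisfying the pentagon, hexagon, and ribbon axioms compatibly with the fixed data. The set of such orbits is finite by the second assertion of \Cref{Ocneanurigidity}.

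Next, I would argue that $A_R/G_R$ is actually discrete. The vanishing of first-order deformations furnished by Ocneanu rigidity gives $T_\mu(G_R \cdot \mu) = T_\mu A_R$ for every $\mu \in A_R$, and together with unobstructedness of deformations (also built into the proof of Ocneanu rigidity via Davydov--Yetter cohomology, see \cite[chp.~9.1]{etingofTensorCategories2015}) this forces each $G_R$-orbit to be open in $A_R$. Hence $A_R/G_R$ is a finite discrete quotient. A continuous family $(C_t)$ with only the associator varying is by hypothesis a continuous map $\mu\colon X \to A_R$; its composition with the orbit-space projection is $\Phi$, and a continuous map from a path-connected space into a discrete set is constant.

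The main obstacle is the discreteness of $A_R/G_R$: finiteness of the orbit set alone is not enough, one needs each orbit to be open, which requires the strong deformation-theoretic form of Ocneanu rigidity (vanishing of both the space of first-order deformations and of the obstruction space) rather than the bare statement of \Cref{Ocneanurigidity}. For the braided and ribbon variants, this amounts to the vanishing of the appropriate deformation cohomology controlling the associator compatibly with the braiding and twist, which is standard. Once this point is granted, the remainder of the argument is purely topological.
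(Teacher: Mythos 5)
Your argument is correct, and it is in substance the same deduction the paper relies on: the paper disposes of \Cref{corollarydeformations} in one line by citing \cite[7.7]{godfardHodgeStructuresConformal2024}, which is exactly the statement that the isomorphism class of $C_t$ is constant along arcs, so what you have done is reconstruct the proof of that cited lemma (gauge orbits on the variety of associators are open, hence the class map to the orbit space is locally constant) rather than find a genuinely different route. Two refinements are worth recording. First, openness of the orbits does not require unobstructedness: once rigidity gives the equality of Zariski tangent spaces $T_\mu(G_R\cdot\mu)=T_\mu A_R$ at every $\mu\in A_R$, the chain $\dim(G_R\cdot\mu)=\dim T_\mu A_R\geq \dim_\mu A_R\geq \dim(G_R\cdot\mu)$ forces $A_R$ to be smooth at $\mu$ of the same dimension as the (smooth, locally closed) orbit, so the orbit contains a neighborhood of $\mu$; no control of higher obstructions is needed, and the finiteness of the orbit set is then a consequence rather than an input. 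Second, since the braiding and twist are held fixed, the gauge group you must use is the subgroup of monoidal autoequivalences trivial on the underlying linear category that preserve the braiding and twist, and the cohomology whose degree-$3$ vanishing you need is the corresponding braided/ribbon variant of Davydov--Yetter cohomology, not the bare statement of \Cref{Ocneanurigidity}; this is indeed available in \cite{etingofFusionCategories2005} and \cite[chp. 9.1]{etingofTensorCategories2015}, but it should be invoked explicitly. Finally, only one direction of your claimed bijection is used (same orbit implies isomorphic in $\Rib$ or $\Braid$), which is just as well, since isomorphism classes of ribbon fusion categories with Grothendieck ring $R$ also include categories with other braidings.
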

\begin{proof}
  By \cite[7.7]{godfardHodgeStructuresConformal2024}
  (which is a corollary of \Cref{Ocneanurigidity}), the isomorphism class of $C_t$ is constant
  along arcs in $X$. As $X$ is path-connected, the isomorphism class of $C_t$ is independent of $t$.
\end{proof}

%-------------------------------------------------------------------------------------------------------------------------

\subsection{Proof of the main result}

In this section, $\Nu$ is a geometric modular/genus $0$ modular/braided functor over $\C$ with
associated modular/ribbon/braided category $\CN$.

\textbf{Step 1:} The truncations of $\Nu$ are defined over number fields.

For $\chi\leq -3$, denote by $\Nu^{\geq\chi}$ the $\chi$-truncation of $\Nu$. As a first step, we fix $\chi$
and prove the theorem for $\Nu^{\geq\chi}$. A truncated geometric functor comprises of finitely many bundles with connection
and finitely many gluing, forgetful and permutation isomorphisms. Hence $\Nu^{\geq\chi}$ can be defined over a sub-algebra $A\subset\C$
of finite type over $\Q$. The algebra $A$ is integral of characteristic $0$ and so by generic smoothness there exists $f\in A\setminus\{0\}$
such that the localization $A_f$ is smooth over $\Q$. So we may assume that $A$ is smooth over $\Q$.

Let $X:=\Spec(A)$, $X^\mathrm{an}:=(X\times_\Q\C)^\mathrm{an}$ be the analytification and $t_0\in X^\mathrm{an}$
the point associated to the generic point of $X$.
Then we have a family of truncated geometric functors $(\Nu^{\geq\chi}_t)_{t\in X^\mathrm{an}}$
and an associated family of ribbon/braided weakly fusion categories $(C_t)_{t\in X^\mathrm{an}}$.
By \cite{etingofRigidityNonnegligibleObjects2024}, braided weakly fusion is equivalent to braided fusion.
Hence $(C_t)_{t\in X^\mathrm{an}}$ is a family of ribbon/braided fusion categories.
Note that by the construction of the functors $\GMod_c^{\geq \chi}(\C)\ra \Modcat$, $\GMod^{0,\geq \chi}(\C)\ra \Rib$ and $\BraidFun^{\geq \chi}(\C)\ra \Braid$
(see \cite[section 5]{godfardHodgeStructuresConformal2024}), only the associators vary.
As $X^\mathrm{an}$ is path-connected, by \Cref{corollarydeformations},
$C_t$ is isomorphic to $C_{t_0}$ for all $t\in X^\mathrm{an}$. By fullness in \Cref{forgetsoverC},
$\Nu^{\geq\chi}_t$ is isomorphic to $\Nu^{\geq\chi}_{t_0}=\Nu^{\geq\chi}$ for all $t$.

As $X$ is of finite type over $\Q$, it admits a $K$-point $t_1$ for some number field $K$.
Now $\Nu^{\geq\chi}_{t_1}$ is isomorphic to $\Nu^{\geq\chi}$ and definable over $K$.
So $\Nu^{\geq\chi}$ can be defined over $K$.

\textbf{Step 2:} Gluing $K$-forms of truncations of $\Nu$.

The data of a geometric functor $\Nu_K$ over a number field $K$ is equivalent to the data, for each $\chi\leq -3$,
of a $\chi$-truncated geometric functor $\Nu^{\geq\chi}_K$ over $K$, together with that of, for each $\chi\leq -3$,
an isomorphism $\phi_\chi$ between $\Nu^{\geq\chi}_K$ and the $\chi$-truncation of $\Nu^{\geq\chi-1}_K$.

Let $K$ be a number field over which $\Nu^{\geq -3}$ can be defined.
Let us construct such data over $K$ for $\Nu$. First, choose a $K$ form $\Nu^{\geq -3}_K$ of $\Nu^{\geq -3}$.
Then, define $\Nu^{\geq\chi}_K$ and $\phi_\chi$ by induction as follows. Assume that $\Nu^{\geq\chi}_K$ is defined.
Then, by the first step, $\Nu^{\geq\chi-1}$ can be defined over a number field $L$, which we can assume
to contain $K$. Choose an $L$-form $\widetilde{\Nu}^{\geq\chi-1}_L$ of $\Nu^{\geq\chi-1}$.
Applying \Cref{isomorphismdescent} below,
after replacing $L$ by a finite extension, we may assume that the $\chi$-truncation of $\widetilde{\Nu}^{\geq\chi-1}_L$
is isomorphic to the pullback of $\Nu^{\geq\chi}_K$ to $L$.
Then, by \Cref{Galoisdescent} below, $\widetilde{\Nu}^{\geq\chi-1}_L$ has a $K$-form
whose $\chi$-truncation is isomorphic to $\Nu^{\geq\chi}_K$. Choose $\Nu^{\geq\chi-1}_K$ to be that $K$-form
and $\phi_\chi$ to be an isomorphism.

From this data, as mentioned above, we get a geometric functor $\Nu_K$ over $K$. As its $(-3)$-truncation
is a $K$-form of the $(-3)$-truncation of $\Nu$, by fullness in \Cref{forgetsoverC}, $\Nu_K$ is a $K$-form of $\Nu$.
This completes the proof of \Cref{mainresult}.

\begin{lemma}\label{isomorphismdescent}
  Let $\chi\leq -3$. If two $\chi$-truncated geometric functors $\Nu_1$ and $\Nu_2$ defined over a number field $L$
  are isomorphic over $\C$, then there exists a finite extension $L'$ of $L$ on which they are isomorphic.
\end{lemma}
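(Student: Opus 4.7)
The plan is to realize the set of all isomorphisms $\Nu_1 \simeq \Nu_2$ as the $\C$-points of a scheme of finite type over $L$, and then invoke the elementary fact that a nonempty $L$-scheme of finite type always has a closed point with residue field a finite extension of $L$.

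More precisely, I would first note that the data of a $\chi$-truncated geometric functor involves only finitely many bundles with flat connections on the stacks $\Mgrbt{g}{n}{r}{s}_L$ (for $2-2g-n \geq \chi$), together with finitely many gluing, forgetful, and permutation isomorphisms. An isomorphism $\Nu_1 \simeq \Nu_2$ consists of a bijection $\phi$ between the (finite) color sets, compatible with $\dagger$ and $0$, together with, for each $(g,n,\ul)$ in the truncation, an isomorphism of flat bundles $\Nu_{1,g}(\ul) \simeq \Nu_{2,g}(\phi(\ul))$ over $\Mgrbt{g}{n}{r}{s}_L$ that commutes with all the structural morphisms.

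The crucial point is that for flat bundles $(\cE_1,\nabla_1), (\cE_2,\nabla_2)$ on a smooth proper DM stack $X/L$, the space $\Hom((\cE_1,\nabla_1),(\cE_2,\nabla_2))$ is a finite-dimensional $L$-vector space, computed as the kernel of a linear map between the finite-dimensional cohomology groups $H^0(X,\underline{\Hom}(\cE_1,\cE_2))$ and $H^0(X,\underline{\Hom}(\cE_1,\cE_2)\otimes\Omega^1_X)$; since the stacks $\Mgrbt{g}{n}{r}{s}_L$ are smooth and proper (\Cref{twistedmodulispaces}), this applies here, and the formation of this Hom space commutes with the flat base change $L \hookrightarrow \C$. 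Consequently, for each fixed color bijection $\phi$ (of which there are finitely many), the compatible families of bundle morphisms form a closed subscheme of the finite-dimensional affine $L$-space $\prod_{g,n,\ul} \Hom(\Nu_{1,g}(\ul),\Nu_{2,g}(\phi(\ul)))$ cut out by the linear equations expressing compatibility with gluing, forgetful and permutation isomorphisms, and the locus where the family consists of isomorphisms is an open subscheme of this. Taking the disjoint union over $\phi$ yields an $L$-scheme $\mathrm{Iso}(\Nu_1,\Nu_2)$ of finite type over $L$ whose $R$-points, for any $L$-algebra $R$, are exactly the isomorphisms of the base changes to $R$.

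By hypothesis, $\mathrm{Iso}(\Nu_1,\Nu_2)(\C)$ is nonempty, so $\mathrm{Iso}(\Nu_1,\Nu_2)$ is a nonempty scheme of finite type over the field $L$. Any such scheme admits a closed point whose residue field $L'$ is a finite extension of $L$, and an $L'$-point of $\mathrm{Iso}(\Nu_1,\Nu_2)$ is by definition an isomorphism $\Nu_1 \simeq \Nu_2$ over $L'$. I do not foresee a substantial obstacle here: the main thing to check carefully is that the Hom spaces of flat bundles on the compactified twisted moduli stacks are finite-dimensional and commute with base change, but this follows formally from properness and smoothness, which was established in \Cref{twistedmodulispaces}.
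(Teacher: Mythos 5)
Your argument is correct, but it is packaged differently from the paper's. The paper simply spreads out: the given complex isomorphism $\phi$ and its inverse are defined over a finite-type sub-$L$-algebra $B\subset\C$ (a truncated functor being finite data), one localizes $B$ so that the identities $\phi\circ\phi^{-1}=\id$ and $\phi^{-1}\circ\phi=\id$ already hold over $B$, and then one applies a surjection $B\twoheadrightarrow L'$ onto a finite extension of $L$ (Nullstellensatz) to the coefficients of $\phi$. You instead represent the whole Isom functor by a finite-type $L$-scheme and take a closed point of it; the specialization step at the end is the same Nullstellensatz. The trade-off: your route needs the extra inputs that horizontal Hom spaces of flat bundles on the smooth proper stacks $\Mgrbt{g}{n}{r}{s}_L$ are finite-dimensional and commute with (flat) base change, and that the isomorphism locus is open — all true, and properness does the work — whereas the paper's spreading-out argument needs none of this and works verbatim for any finitely presented data. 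In return your construction gives slightly more (a genuine Isom-scheme, with transparent Galois action and the fact that an $L$-rational point would yield an isomorphism over $L$ itself). Two small imprecisions to fix if you write this up: the compatibility conditions with the gluing isomorphisms are not literally linear in the morphisms, because the factor $\Nu_0(\mu,\mu^\dagger)^\vee$ transforms by the inverse transpose — they are still algebraic (polynomial in the pair $(f,f^{-1})$), which is all you need; and the claim that the $R$-points are isomorphisms over $R$ for \emph{every} $L$-algebra $R$ requires cohomology and base change, but you only ever evaluate on field extensions of $L$, where flatness makes this automatic.
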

\begin{proof}
  The proof is very similar to that of Step 1 above. Let $\phi$ be the isomorphism over $\C$ between $\Nu_1$ and $\Nu_2$.
  Then $\phi$ and $\phi^{-1}$ are defined over a sub-$L$-algebra $B\subset \C$ of finite type.
  Localizing, we may choose $B$ to ensure that $\phi\circ\phi^{-1}=\id$ and $\phi^{-1}\circ\phi=\id$ over $B$.
  As $B$ is of finite type, there exists a surjection $B\twoheadrightarrow L'$ with $L'$ a finite extension of $L$.
  Applying this map to the coefficients of $\phi$, we get an isomorphism between $\Nu_1$ and $\Nu_2$ over $L'$.
\end{proof}

\begin{lemma}\label{Galoisdescent}
  Let $K$ be a number field and $L$ a Galois extension of $K$. Fix $\chi'\leq\chi\leq -3$ and let $\Nu$ be a
  $\chi'$-truncated geometric functor over $L$ such that it's $\chi$-truncation $\Nu^{\geq\chi}$ is the pullback of a
  $\chi$-truncated geometric functor $\Nu'$ defined over $K$. Then $\Nu$ can be defined over $K$,
  and it can be done so that its $\chi$-truncation is isomorphic to $\Nu'$.
\end{lemma}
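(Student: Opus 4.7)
The plan is to construct a Galois descent datum for $\Nu$ along the extension $L/K$, using the full-faithfulness of the truncation functor $\GMod^{\geq\chi'}(L)\to\GMod^{\geq\chi}(L)$ to verify the cocycle condition cleanly, and then invoke standard Galois descent on the finitely many coherent pieces of data that make up a truncated geometric functor.

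Fix an embedding $L\hookrightarrow\C$ (so that \Cref{forgetsoverK} applies over $L$). For each $\sigma\in\Gal(L/K)$, write $\Nu^\sigma$ for the Galois conjugate of $\Nu$ as a $\chi'$-truncated geometric functor over $L$. Because $\Nu^{\geq\chi}$ is by hypothesis the pullback of $\Nu'$ from $K$, its defining bundles and structural isomorphisms carry a canonical $K$-structure; this produces for each $\sigma$ a canonical isomorphism $\psi_\sigma\colon (\Nu^\sigma)^{\geq\chi}\cong\Nu^{\geq\chi}$ of $\chi$-truncated geometric functors over $L$, and the family $(\psi_\sigma)$ satisfies the cocycle identity $\psi_{\sigma\tau}=\psi_\sigma\circ\sigma^*(\psi_\tau)$. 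By fullness in \Cref{forgetsoverK}, each $\psi_\sigma$ lifts to an isomorphism $\phi_\sigma\colon\Nu^\sigma\cong\Nu$ of $\chi'$-truncated geometric functors over $L$. Both $\phi_{\sigma\tau}$ and $\phi_\sigma\circ\sigma^*(\phi_\tau)$ then lift the canonical $\psi_{\sigma\tau}$, so they coincide by faithfulness in \Cref{forgetsoverK}; hence $(\phi_\sigma)$ is a genuine descent datum.

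To conclude, note that a $\chi'$-truncated geometric functor over $L$ consists of finitely many bundles with flat connection on the stacks $\Mgrbt{g}{n}{r}{s}_L=\Mgrbt{g}{n}{r}{s}_K\times_K L$ of finite type over $K$, together with finitely many structural isomorphisms between them. Each of these is quasi-coherent data on a stack of finite type over $K$, so ordinary Galois descent applies piecewise. Since $\phi_\sigma$ is by definition a morphism of $\chi'$-truncated functors, it automatically respects gluing, forgetful, and permutation isomorphisms, so the descent datum is compatible with all of this structure; descending yields a $\chi'$-truncated geometric functor $\Nu_K$ over $K$ whose pullback to $L$ is $\Nu$. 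Because the restriction of the descent datum to the $\chi$-truncation was precisely the canonical $\psi_\sigma$, the $\chi$-truncation of $\Nu_K$ is $\Nu'$.

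The main subtlety is the verification of the cocycle condition for $(\phi_\sigma)$: a priori, one only controls the lift $\phi_\sigma$ up to the choice of lift, and without faithfulness of the forgetful functor over the nonalgebraically closed field $L$ there would be no reason for the cocycle relation to hold. The decisive ingredient is therefore \Cref{forgetsoverK}, whose proof over arbitrary subfields of $\C$ is precisely what makes this Galois descent go through.
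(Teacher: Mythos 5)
Your proof is correct and follows essentially the same route as the paper: transport the canonical descent datum on the $\chi$-truncation (coming from the $K$-form $\Nu'$) to $\Nu$ via full-faithfulness of truncation over $L$ (\Cref{forgetsoverK}), with fullness supplying the lifts and faithfulness the cocycle identity, then descend each bundle with connection and each structural isomorphism separately. The paper's proof is just a more condensed version of the same argument.
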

\begin{proof}
  Let us denote by $\iota:K\hookrightarrow L$ the extension. Then the isomorphism $\Nu^{\geq\chi}\simeq\iota^*\Nu'$
  provides $\iota$-Galois descent data for $\Nu^{\geq\chi}$, i.e. the datum for each $\sigma\in\Gal(L/K)$,
  of an isomorphism $\phi_\sigma:\Nu^{\geq\chi}\ra\sigma^*\Nu^{\geq\chi}$, such that $\forall \sigma,\tau$,
  $\sigma^*(\phi_\tau)\circ\phi_\sigma=\phi_{\tau\sigma}$. By full-faithfulness of $\chi$-truncation over $L$
  (\Cref{forgetsoverK}), this gives $\iota$-Galois descent data for $\Nu$. Now by Galois descent applied
  to each module with connection $(\Nu_g(\ul),\nabla)$ and each gluing/forgetful/permutation isomorphism,
  $\Nu$ can be defined over $K$, say $\Nu\simeq\iota^*\widetilde{\Nu}$ with $\widetilde{\Nu}$ over $K$.
  Moreover, as the descent data came from $\Nu'$,
  the $\chi$-truncation of $\widetilde{\Nu}$ is isomorphic to $\Nu'$.
\end{proof}

%-------------------------------------------------------------------------------------------------------------------------s

\appendix

%-------------------------------------------------------------------------------------------------------------------------

\section{Regular singular connections and specialization on root stacks in characteristic \texorpdfstring{$0$}{0}}\label{appendixregularsingular}

The purpose of this Appendix on regular-singular connections is threefold:
to explain some subtleties appearing when working on non-algebraically closed fields, to introduce and prove well-definiteness of the specialization
of a regular-singular connection to the normal bundle of a divisor, and to study the behavior of regularity and the specialization map
with respect to taking root stack on a divisor.

In this section, $X$ is a geometrically connected smooth separated Deligne-Mumford stack over a field $K$ of characteristic $0$,
and $D\subset X$ is a normal crossing divisor with irreducible components $D_1,\dotsc,D_n$, possibly with self-crossings,
that we each assume to be geometrically connected.

%-------------------------------------------------------------------------------------------------------------------------

\subsection{Regular singular connections}

% definition of regular singular connection and their category
\begin{definition}\label{regularsingular}
  A bundle with flat connection $(\cE,\nabla)$ on $X\setminus D$ is said to be regular with respect to $(X,D)$
  if there exists a bundle $\overline{\cE}$ on $X$ extending $\cE$ such that $\nabla$ maps
  $\overline{\cE}$ into $\ocE\otimes\Omega_X^1(\log D)$. We call $\ocE$ an extension of $(\cE,\nabla)$.
\end{definition}

\begin{notation}
  The category of bundles with flat connection on $X\setminus D$ regular with respect to $(X,D)$ will be denoted,
  as in \Cref{sectionBK}, by $\RS{X}{D}$.
\end{notation}

Let us study locally the possible extensions $\overline{\cE}$ of $(\cE,\nabla)$. Notice that by Hartogs' Lemma,
such an extension is determined by its restriction to $X\setminus \bigcup_{i\leq j}D_i\pitchfork D_j$,
so we restrict our study to étale maps to $X$ on which $D$ is smooth.

% local form and independence of conjugacy class of residue from point
Fix an extension $\ocE$. 
Let $x\in |D^\mathrm{sm}|$ and $U$ be an affine neighborhood of $x$ in the étale topology, on which the divisor $D$ is smooth
and $\ocE$ is trivial.
Shrinking $U$, we may choose an étale map $(z,x_1,\dotsc,x_d):U\ra \A^{d+1}_K$ such that $D\cap U=\{z=0\}$.
Fix a trivialization of $\ocE_U$. Then, in coordinates, $\nabla$ is of the form:
\begin{equation}\label{localform}
  \nabla = d + \frac{A}{z}dz +\sum_i B_idx_i\text{ with }A,B_1,\dotsc,B_d\in \mathcal{M}_r(\cO(U)).
\end{equation}
For each $y\in D\cap U$, the conjugacy class of $A(y)\in\End{\ocE(y)}$ is independent of the choice of coordinates $z$, $x_i$.
Moreover, one can deduce from flatness of $\nabla$ that its conjugacy class in $\cM_r(\Kbar)$ is independent of $y$ in
the connected components of $D\cap U$ (\cite[Appendix A]{andreRhamCohomologyDifferential2001}).
Hence we see that that to $\ocE$ we can associate the bundles $\ocE_i:=\ocE_{\mid D_i}$
together with endomorphisms $A_i\in\End{\ocE_i}$ whose conjugacy classes are constant.

%Deligne extensions
We now introduce the notion of Deligne extension.
Let $\Gal_K$ be the Galois group of the extension $\Kbar/K$. Then as $K$ is of characteristic $0$,
the abelian group $\Kbar/\Gal_K$ is torsion-free. Moreover, it has a natural action of $\Z$ by translation, for which every orbit is free.

\begin{definition}[Deligne's $\tau$-extension]
  Let $(\cE,\nabla)$ be a bundle with flat connection on $X\setminus D$ and $\ocE$ an extension to $X$.
  Let $\tau=(\tau_1,\dotsc,\tau_n)$ be a family of sections $\tau_i:(\Kbar/\Gal_K)/\Z\ra \Kbar/\Gal_K$, one for each irreducible component of $D$.
  We say that $\ocE$ is a $\tau$-extension if for each $i$, the eigenvalues of $A_i$ as above are in the image of $\tau_i$.
\end{definition}

%existence and uniqueness over \Kbar of Deligne's \tau-extension
\begin{theorem}[{\cite[def. 4.4, thm. 4.9]{andreRhamCohomologyDifferential2001}}]\label{DeligneKbar}
  If $K$ is algebraically closed, for every $\tau$, a bundle with flat connection $(\cE,\nabla)$ on $X\setminus D$ regular with respect to $(X,D)$
  admits a $\tau$-extension, unique up to unique isomorphism.
\end{theorem}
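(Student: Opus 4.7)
The plan is to follow Deligne's original construction, as adapted to normal crossings by André--Baldassarri, and reduce the statement to a local model on which an explicit eigenvalue-shifting procedure can be carried out. Since extensions are determined by their restrictions away from codimension-two strata (Hartogs), and both existence and uniqueness descend étale-locally, I would first reduce to the case where $X$ is an affine scheme admitting an étale map $(z_1,\dots,z_k,x_1,\dots,x_d)$ with $D=\{z_1\cdots z_k=0\}$. In this polydisk setting, the regularity hypothesis provides an initial extension $\ocE_0$ in which $\nabla$ takes the local form with logarithmic poles, and flatness of $\nabla$ forces the residues $A_i$ along distinct components to pairwise commute.

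For \emph{local existence} of a $\tau$-extension, I would perform a finite sequence of elementary modifications. Using that $K=\Kbar$, the commuting endomorphisms $A_i$ admit a simultaneous generalized eigenspace decomposition of $\ocE_0$; twisting a given generalized eigenspace of $A_i$ by $z_i^m$ shifts the corresponding eigenvalue of $A_i$ by $m$ without affecting the eigenvalues of the other $A_j$. Since $(\Kbar/\Gal_K)/\Z$ is torsion-free with free $\Z$-action, each eigenvalue has a unique $\Z$-orbit representative in the image of $\tau_i$, so finitely many such twists produce an extension whose residues have all eigenvalues in $\mathrm{im}(\tau_i)$ for every $i$.

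For \emph{local uniqueness}, I would show that any isomorphism $\phi:(\cE,\nabla)\simeq(\cE,\nabla)$ between the restrictions of two $\tau$-extensions $\ocE$ and $\ocE'$ extends across $D$. In a common trivialization $\phi$ satisfies a first-order linear ODE with respect to $\nabla$, and a standard computation shows that a pole of order $m>0$ of $\phi$ along $D_i$ would force some eigenvalue of $A_i$ to differ from an eigenvalue of $A_i'$ by the nonzero integer $m$; but by construction both residues have eigenvalues in $\mathrm{im}(\tau_i)$, which meets each $\Z$-coset at most once, so $\phi$ is holomorphic along each $D_i$, and similarly $\phi^{-1}$ extends. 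Uniqueness of the isomorphism follows because any automorphism of a $\tau$-extension restricting to the identity on $X\setminus D$ is the identity by Hartogs.

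To \emph{globalize}, I would apply uniqueness over an étale cover $\{U_\alpha\to X\}$: the locally constructed $\tau$-extensions are related on overlaps by canonical, hence cocycle-compatible, isomorphisms, so they descend to a global $\tau$-extension. The main obstacle is the local existence step: carrying out the eigenvalue shifts simultaneously along all branches requires passing to the normalization of each $D_i$ (so that the conjugacy class of the residue is well-defined in the presence of self-crossings) and verifying that the twist of $\ocE_0$ by a power of the local equation of $D_i$ glues into a coherent modification on $X$, which is where the normal-crossings hypothesis and the commutativity of the residues are essential.
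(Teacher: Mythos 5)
The paper does not prove this statement: it is imported verbatim from André--Baldassarri \cite[def.~4.4, thm.~4.9]{andreRhamCohomologyDifferential2001}, so there is no internal proof to compare against. Your sketch is the standard Deligne-style argument (local shearing modifications to move residue eigenvalues into $\myim{\tau_i}$, uniqueness from the fact that $\myim{\tau_i}$ meets each $\Z$-coset once, then globalization by uniqueness), and it is correct in outline; the only imprecision worth flagging is that the residues $A_i$ commute only after restriction to the intersections $D_i\cap D_j$, so the ``simultaneous generalized eigenspace decomposition'' lives on the strata and must be lifted to a direct-sum decomposition of $\ocE_0$ near each stratum before the twist by $z_i^m$ makes sense as an elementary modification.
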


We can use descent to remove the assumption that $K=\Kbar$.

\begin{corollary}
  For any $K$ of characteristic $0$ and any $\tau$, a bundle with flat connection $(\cE,\nabla)$ on $X\setminus D$ regular with respect to $(X,D)$
  admits a $\tau$-extension, unique up to unique isomorphism.
\end{corollary}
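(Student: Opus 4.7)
The plan is to deduce this by Galois descent from the already established case over $\Kbar$. Set $G=\Gal_K$. Since $(\cE,\nabla)$ is defined over $K$, its base change $(\cE_{\Kbar},\nabla_{\Kbar})$ is regular on $X_{\Kbar}\setminus D_{\Kbar}$, and the theorem over $\Kbar$ supplies a unique $\tau$-extension $\overline{\cF}$ on $X_{\Kbar}$. The strategy is to use the uniqueness statement itself to equip $\overline{\cF}$ with canonical Galois descent data, and then to invoke standard descent for coherent sheaves.

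The first key step is to check that for every $\sigma\in G$, the pullback $\sigma^*\overline{\cF}$ is again a $\tau$-extension of the (Galois-invariant) pair $(\cE_{\Kbar},\nabla_{\Kbar})$. This reduces to the observation that the constant endomorphisms $A_i$ attached to $\sigma^*\overline{\cF}$ along each $D_i$ have eigenvalues which are the $\sigma$-translates of those of $\overline{\cF}$, while $\tau_i$ takes values in the $G$-invariant quotient $\Kbar/G$, so its image is automatically $G$-stable. By the uniqueness part of the theorem, this produces a canonical isomorphism $\phi_\sigma\colon\sigma^*\overline{\cF}\xrightarrow{\sim}\overline{\cF}$ of $\tau$-extensions for each $\sigma\in G$. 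The cocycle identity $\phi_{\sigma\tau}=\phi_\sigma\circ\sigma^*\phi_\tau$ is then automatic, since both sides are isomorphisms $(\sigma\tau)^*\overline{\cF}\to\overline{\cF}$ inducing the identity on $(\cE_{\Kbar},\nabla_{\Kbar})$. This equips $\overline{\cF}$ with a semilinear $G$-action.

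Since $\overline{\cF}$ is coherent, it and its descent datum are already defined over some finite Galois subextension $L/K$, so standard Galois descent along the finite étale cover $X_L\to X$ produces a coherent extension $\ocE$ of $\cE$ on $X$ with $\ocE_{\Kbar}\simeq\overline{\cF}$. The log connection on $\overline{\cF}$ is characterized as the unique one extending $\nabla_{\Kbar}$ on $\cE_{\Kbar}$, hence is $G$-equivariant by the same uniqueness argument and descends; this gives $\ocE$ the structure of a $\tau$-extension of $(\cE,\nabla)$ over $K$. Uniqueness over $K$ follows by the same device: given two $\tau$-extensions over $K$, the unique $\Kbar$-isomorphism between their base changes is $G$-equivariant (by uniqueness applied to $\sigma^*$ of the isomorphism), hence descends to a unique $K$-isomorphism. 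The only conceptually non-formal point is verifying that the $\tau$-condition is $G$-stable; once this is observed, everything reduces to routine bookkeeping for descent, and I do not anticipate a genuine obstacle.
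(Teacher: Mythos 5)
Your proposal is correct and follows essentially the same route as the paper: both deduce the result from the $\Kbar$-case by observing that $\tau$, taking values in $\Kbar/\Gal_K$, is Galois-stable, so uniqueness of the $\tau$-extension furnishes canonical descent data, which one spreads out to a finite Galois extension $L/K$ before applying Galois descent; uniqueness over $K$ likewise follows because the canonical $\Kbar$-isomorphism is Galois-invariant.
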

\begin{proof}
  Each $\tau_i$ lifts uniquely to a section $\Kbar/\Z\ra\Kbar$ that we will also denote $\tau_i$.
  Let us start by proving uniqueness. Assume $\ocE_1$ and $\ocE_2$ are $\tau$-extensions of $(\cE,\nabla)$. Then by uniqueness in \Cref{DeligneKbar},
  their pullbacks $\ocE_{1,\Kbar}$ and $\ocE_{2,\Kbar}$ to $X_{\Kbar}$ are uniquely isomorphic. Denote by $\phi$ this isomorphism.
  By uniqueness of $\phi$, for each $\sigma\in\Gal_K$, $\sigma^*\phi=\phi$. Hence $\phi$ descends to a unique $\phi_K:\ocE_1\simeq \ocE_2$.

  For existence, note that by \Cref{DeligneKbar} the pullback $(\cE_{\Kbar},\nabla_{\Kbar})$ to $(X\setminus D)_{\Kbar}$ admits a $\tau$-extension
  $\ocE_{\Kbar}$. This extension is definable over a finite extension $L/K$, that we may assume to be Galois. 
  Hence, for this $L$, $(\cE_{L},\nabla_{L})$ admits a $\tau$-extension $\ocE_L$. By uniqueness of the $\tau$-extension
  and because $\tau$ is $\Gal_K$-equivariant, for each $\sigma\in \Gal(L/K)$
  there is a canonical isomorphism $\sigma^*\ocE_L\simeq \ocE_L$. These isomorphisms provide descent data for the bundle $\ocE_L$,
  and hence by Galois descent, $\ocE_L$ is definable over $K$, say $\ocE_L=p^*\ocE$ with $p:X_L\ra X$.
  Now $\ocE$ is desired $\tau$-extension of $(\cE,\nabla)$.
\end{proof}

%regularity is étale local on X

The uniqueness of the $\tau$-extension can be used to prove that regularity of a connection is étale-local,
and thus can be checked in local coordinates.

\begin{proposition}
  Let $(\cE,\nabla)$ be a flat bundle over $X\setminus D$. Then $(\cE,\nabla)$ is regular with respect to $(X,D)$
  if and only if it is on an étale cover of $X$.
\end{proposition}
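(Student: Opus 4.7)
The plan is to use the uniqueness of $\tau$-extensions to descend an extension constructed étale-locally.

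The forward direction is immediate: if $\overline{\cE}$ is an extension on $X$ with $\nabla(\overline{\cE})\subset \overline{\cE}\otimes\Omega^1_X(\log D)$, and $f:U\to X$ is étale, then $f^*\overline{\cE}$ extends $f^*\cE$ and the pullback connection lands in $f^*\overline{\cE}\otimes\Omega^1_U(\log f^{-1}D)$, because formation of $\Omega^1(\log D)$ commutes with étale pullback.

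For the reverse direction, let $f:U\to X$ be an étale cover such that $(f^*\cE,f^*\nabla)$ is regular with respect to $(U,f^{-1}D)$. Fix once and for all a family of sections $\tau$. By the corollary to \Cref{DeligneKbar} applied to $(U,f^{-1}D)$, there exists a unique $\tau$-extension $\overline{f^*\cE}$ of $(f^*\cE,f^*\nabla)$ on $U$. Let $p_1,p_2:U\times_X U\to U$ be the two projections; both are étale, so the pullbacks $p_1^*\overline{f^*\cE}$ and $p_2^*\overline{f^*\cE}$ are extensions on $U\times_X U$ of the pullback of $(\cE,\nabla)$ to $(U\times_X U)\setminus (f\circ p_1)^{-1}D$, with connection having log poles. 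One checks that the eigenvalue condition defining $\tau$-extensions is preserved by étale pullback (the endomorphisms $A_i$ are literally pulled back), so both are $\tau$-extensions of the same regular connection on $U\times_X U$. By uniqueness, there is a canonical isomorphism $\phi:p_1^*\overline{f^*\cE}\simeq p_2^*\overline{f^*\cE}$.

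The cocycle condition for $\phi$ on $U\times_X U\times_X U$ follows from the same uniqueness argument applied to the triple fiber product: all three composed isomorphisms are canonical isomorphisms between $\tau$-extensions of the same connection, hence must coincide. By étale descent of quasi-coherent sheaves, $\overline{f^*\cE}$ descends to a bundle $\overline{\cE}$ on $X$ extending $\cE$. The connection $\nabla$ extends to a connection $\overline{\cE}\to\overline{\cE}\otimes\Omega^1_X(\log D)$ because this is a condition that can be checked étale-locally, where it holds by construction.

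The main obstacle is ensuring the $\tau$-extension construction is functorial under étale pullback well enough to produce a genuine descent datum; this reduces to the observation that the eigenvalue spectrum of the residue endomorphisms $A_i$ in (\ref{localform}) is preserved by étale base change, together with the rigidity provided by the uniqueness statement.
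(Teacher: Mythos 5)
Your proof is correct and follows essentially the same route as the paper: construct the canonical $\tau$-extension étale-locally and use its uniqueness (up to unique isomorphism) to produce a descent datum that glues the local extensions to a global one. The paper's version is terser — it simply says the $\tau$-extensions "glue on the $U^j\times_X U^{j'}$ by uniqueness" — while you spell out the cocycle condition and the preservation of the eigenvalue condition under étale pullback, but the underlying argument is identical.
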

\begin{proof}
  Let $(U^j\ra X)_j$ be an étale cover. Denote by $D^j$ the pullback to $D$ to $U^j$. Assume that $(\cE,\nabla)$
  is regular with respect to $(U^j,D^j)$ for all $j$. Choose any $\tau$. Then $(\cE,\nabla)$ admits on each $U^j$
  a unique $\tau$-extension $\ocE^j$. By uniqueness, these extensions glue on the $U^j\times_X U^{j'}$ to a $\tau$-extension of $(\cE,\nabla)$.
  So $(\cE,\nabla)$ is regular with respect to $(X,D)$.
\end{proof}

%-------------------------------------------------------------------------------------------------------------------------

\subsection{Specialization to the normal bundle of a divisor}\label{subsectionspecialization}

%-------------------------------------------------------------------------------------------------------------------------

We first define the specialization of a $\tau$-extension and then explain how the specialization is independent of the choice of $\tau$.
We assume that $D_1$ is smooth for notational simplicity. If $D_1$ has self-crossings, everything works, provided we replace $ND_1$
by the normal bundle $N\hat{D}_1$ to $\hat{D}_1\ra X$, where $\hat{D}_1$ denotes the normalization of $D_1$.

Let us first briefly review the $\mathcal{D}$-module formalism. A flat connection $\nabla$ on a bundle $\cE$ over $Y$
is the same as the structure, on $\cE$, of a module over the algebra $\mathcal{D}_Y$ of differential operators.
In étale coordinates $(y_k)_k$, the operator $\frac{\partial}{\partial y_k}$ acts by $\nabla_{\frac{\partial}{\partial y_k}}$,
and $\mathcal{D}_Y$ is generated by the $\frac{\partial}{\partial y_k}$.
The datum of $(\cE,\nabla)$ over $X\setminus D$ together with an extension $\ocE$ is the same as that of a structure, on $\ocE$,
of a module over the algebra $\mathcal{D}_{X,D}$ of differential operators preserving the ideal $\cI_D$ defining $D$.
In étale coordinates $(z,x_1,\dotsc,x_d)$ with $D=\{z=0\}$, the algebra $\mathcal{D}_{X,D}$ is generated by
$z\frac{\partial}{\partial z},\frac{\partial}{\partial x_1},\dotsc,\frac{\partial}{\partial x_d}$,
and acts on $\ocE$ via the action of $\mathcal{D}_{X\setminus D}$ on $\cE$ induced by $\nabla$.
The $\log D$ assumption ensures that it is well defined.

%definition of specialization map, dependent on tau

\begin{definition}\label{definitionsspecialization}
  Let $\cI_1$ be the ideal defining $D_1$. The normal bundle $ND_1$ of $D_1$ is defined as
  $\underline{\Spec}_{D_1}(\bigoplus_{k\geq 0}\cI_1^k/\cI_1^{k+1})$. It contains a divisor $D_{ND_1}=D_1\cup D_2'\cup\dotsb\cup D_n'$ induced by $D$.
  For a bundle with flat connection $(\cE,\nabla)$ on $X\setminus D$
  and a $\tau$-extension $\ocE_\tau$ to $X$. Define the bundle
  $$\Sp_{\tau}(\cE,\nabla):=\bigoplus_{k\geq 0}\cI_1^k\ocE_\tau/\cI_1^{k+1}\ocE_\tau.$$
  It is a free bundle on $ND_1$ and is naturally equipped with an action of the algebra $\mathcal{D}_{ND_1,D_{ND_1}}$.
  We will denote by $\nabla^{1,\tau}$ the corresponding connection.
\end{definition}

\begin{remark}\label{remarkspecialization}
  The action of $\mathcal{D}_{ND_1,D_{ND_1}}$ comes from the fact that:
  $$\mathcal{D}_{ND_1,D_{ND_1}}=\bigoplus_{k\geq 0}\cI_1^k\mathcal{D}_{X, D}/\cI_1^{k+1}\mathcal{D}_{X, D}.$$
  In coordinates as in \Cref{localform}, the specialization $\nabla^{1,\tau}$ takes the form:
  \begin{equation*}
    \nabla = d - \frac{A(z=0)}{z}dz -\sum_i B_i(z=0)dx_i.
  \end{equation*}
  Note that the definition of the specialization makes sense for any extension $\ocE$ of $(\cE,\nabla)$. However, in this generality,
  the specialization will depend on the choice of the extension. When restricting to $\tau$-extensions, as explained below,
  the specialization is independent of $\tau$.
\end{remark}

The connection $\nabla^{1,\tau}$ is $\mathbb{G}_m$-equivariant for natural line bundle structure on $ND_1$. This induces a natural decomposition
of $\Sp_{\tau}(\cE,\nabla)$ that we now explain. Note that $\mathcal{D}_{ND_1,D_{ND_1}}$ is graded and the degree $0$ piece canonically
contains the sheaf theoretic pullback of $\left(\Omega^1_X(\log D)_{\mid D_1}\right)^\vee$ to $ND_1$ as the linear subspace
spanned by vector fields.
Hence the residue map $\mathrm{res}:\Omega^1_X(\log D)_{\mid D_1}\ra \cO_{D_1}$ induces a canonical map $\cO_{D_1}\ra (\mathcal{D}_{ND_1,D_{ND_1}})_0$.
Let us denote by $r$ the vector field which is the image of $1$ by this map.
In étale coordinates $(z,x_1,\dotsc,x_d)$ with $D_1=\{z=0\}$, $r$ is just the class of $z\frac{\partial}{\partial z}$ in
$(\mathcal{D}_{ND_1,D_{ND_1}})_0=\mathcal{D}_{X,D}/\cI_1\mathcal{D}_{X,D}$.

For $\alpha\in \Kbar/\Gal_K$ in the image of $\tau_1$, denote by $\ocE_\alpha\subset \Sp_\tau(\cE)$ the subspace spanned by the generalized
eigenspaces of $r$ for the eigenvalues $\lambda\in \Kbar$ such that $\lambda-m\in \alpha$ for some $m\in \Z$.
In local coordinates such as in \Cref{remarkspecialization}, $\ocE_{\tau,\alpha}$ is the subbundle of $\ocE$ corresponding to the generalized eigenspaces
of $A(z=0)$ with eigenvalues in $\alpha$. As the conjugacy class of $A(z=0)$ is constant, we get the following.

\begin{proposition}\label{propositiondecomposition}
  We have a bundle decomposition compatible with the connection $\Sp_\tau(\cE,\nabla)=\bigoplus_{\alpha\in\myim{\tau}}\ocE_{\tau,\alpha}$.
\end{proposition}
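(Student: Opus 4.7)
My plan is to prove the proposition by local étale analysis near the smooth locus of $D_1$. I would work in an étale neighborhood $U\to X$ with coordinates $(z,x_1,\dotsc,x_d)$ such that $D_1\cap U=\{z=0\}$ and fix a trivialization of $\ocE_\tau|_U$, in which $\nabla$ takes the form of \eqref{localform}, namely $\nabla=d+(A/z)\,dz+\sum_i B_i\,dx_i$. The first step is to compute explicitly that the $k$-th graded piece $\cI_1^k\ocE_\tau/\cI_1^{k+1}\ocE_\tau$ is canonically identified with $\ocE_\tau|_{D_1}\otimes\cI_1^k/\cI_1^{k+1}$ and that $r=z\partial_z$ acts on it as the operator $k+A(0)$. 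Since $\tau_1$ is a section of the $\mathbb{Z}$-quotient map, distinct elements of $\myim{\tau_1}$ lie in distinct $\mathbb{Z}$-orbits; combined with the $\tau$-extension property that the eigenvalues of $A(0)$ lie in $\myim{\tau_1}$, this identifies $\ocE_{\tau,\alpha}$ on the $k$-th graded piece with the generalized eigenspace of $A(0)$ for eigenvalue $\alpha$, placed in every graded degree.

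Next I would verify the subbundle and direct sum statements. As recalled in the discussion preceding the proposition, flatness forces the conjugacy class of $A(0)$ to be constant on connected components of $D_1$, so the generalized eigenspaces of $A(0)$ have locally constant dimension and form subbundles of $\ocE_\tau|_{D_1}$. The pointwise Jordan decomposition then yields the direct sum decomposition $\Sp_\tau(\cE,\nabla)=\bigoplus_{\alpha\in\myim{\tau}}\ocE_{\tau,\alpha}$ as $\cO_{ND_1}$-modules. The local description glues on étale overlaps because it is canonically characterized as a spectral decomposition of $r$, and it descends from $\Kbar$ to $K$ because the labeling set $\myim{\tau}\subset\Kbar/\Gal_K$ is $\Gal_K$-stable.

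The remaining point is compatibility with the connection. I would use that $\mathcal{D}_{ND_1,D_{ND_1}}$ is locally generated over $\cO_{ND_1}$ by the pairwise commuting vector fields $r$ and $\partial_{x_i}$ (or $x_i\partial_{x_i}$ when $x_i$ defines another component of $D$). Flatness of $\nabla^{1,\tau}$ then implies that these vector fields commute as operators on $\Sp_\tau(\cE,\nabla)$; in particular $r$ commutes with every other generator, so its generalized eigenspace decomposition is preserved by the full $\mathcal{D}$-module action, i.e.\ each $\ocE_{\tau,\alpha}$ is a sub-$\mathcal{D}$-module. Unpacked in the local frame above, the commutation of $r$ and $\partial_{x_i}$ as operators on the module reduces to the fiber-level identity $\partial_{x_i}A(0)=[A(0),B_i(0)]$, which is exactly what the $dz\wedge dx_i$ coefficient of the curvature $d\nabla+\nabla\wedge\nabla=0$ yields at $z=0$. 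The only substantive content is this flatness identity; the rest of the argument is bookkeeping, so I do not expect any serious obstacle.
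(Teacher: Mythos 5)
Your argument is correct and takes essentially the same route as the paper, which presents this proposition as an immediate consequence of the preceding discussion: identify $r$ with $k+A(z=0)$ on the $k$-th graded piece, use that $\myim{\tau_1}$ meets each $\Z$-orbit once to match $\ocE_{\tau,\alpha}$ with the generalized eigenspaces of $A(z=0)$ for eigenvalues in $\alpha$, and invoke constancy of the conjugacy class of the residue to get subbundles. The only addition is that you make explicit the flatness identity $\partial_{x_i}A(0)=[A(0),B_i(0)]$ giving compatibility with the connection, a point the paper leaves implicit (relying on the cited appendix of Andr\'e for the flatness of the residue endomorphism).
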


\begin{lemma}\label{lemmaspecialization}
  For any choice of $\tau$, $\tau'$, and pair $\alpha,\alpha'\in \Kbar/\Gal_K$ such that $\alpha-\alpha'\in\Z$, $\alpha\in\myim{\tau_1}$ and $\alpha'\in\myim{\tau_1'}$,
  there is a canonical isomorphism $\ocE_{\tau,\alpha\mid U_{ND_1}}\simeq \ocE_{\tau',\alpha'\mid U_{ND_1}}$, where $U_{ND_1}$ denotes $ND_1\setminus D_{ND_1}$.
\end{lemma}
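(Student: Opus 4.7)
The plan is to construct the desired isomorphism étale-locally via a factor built from a local defining equation for $D_1$, and then to verify canonicity and gluing. I would work on an étale chart $U \to X$ with coordinates $(z, x_1, \dotsc, x_d)$ in which $D_1 \cap U = \{z = 0\}$, after reducing to the case of smooth $D_1$ by passing to the normalization $\hat D_1$. Choose trivializations of $\ocE_\tau$ and $\ocE_{\tau'}$ on such a chart adapted to the generalized eigenspace decomposition of their residue endomorphisms along $D_1$, so that $\ocE_\tau$ has a basis $\{e_i\}$ with residue eigenvalue $\lambda_i \in \mathrm{im}(\tau_1)$.

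Next I would record the precise relationship between the two lattices. Since $\nabla(z^k e_i) = z^k\bigl(\nabla(e_i) + k e_i \tfrac{dz}{z}\bigr)$ shows that multiplication by $z^k$ shifts the residue block at $e_i$ to eigenvalue $\lambda_i + k$ while preserving the Jordan type, uniqueness of the $\tau'$-extension implies that $\{f_i := z^{-m_i} e_i\}$, with $m_i \in \Z$ the unique integer such that $\lambda_i - m_i \in \mathrm{im}(\tau_1')$, forms a local basis of $\ocE_{\tau'}$ inside $j_*\cE$, where $j : U \setminus D_1 \hookrightarrow U$. For indices $i$ contributing to the $\alpha$-piece, the constraints $\lambda_i \in \alpha$ and $\lambda_i - m_i \in \alpha' = \alpha - m$ force $m_i = m$. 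Passing to the $\cI_1$-adic associated graded, the classes $[e_i]$ span $\ocE_{\tau,\alpha}$ inside $\Sp_\tau(\cE)$, while the classes $[f_i]$ span $\ocE_{\tau',\alpha'}$ inside $\Sp_{\tau'}(\cE)$.

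Let $\tilde z$ denote the class of $z$ in $\cI_1/\cI_1^2$, viewed as the fiber coordinate of $ND_1 \to D_1$; it vanishes on the zero section but is invertible on $U_{ND_1}$. I would then define the local map
$$\phi \colon \ocE_{\tau,\alpha\mid U_{ND_1}} \longrightarrow \ocE_{\tau',\alpha'\mid U_{ND_1}}, \quad [e_i] \longmapsto \tilde z^m [f_i],$$
which is an $\cO_{U_{ND_1}}$-linear isomorphism with inverse $[f_i] \mapsto \tilde z^{-m}[e_i]$. Compatibility with the specialized connections follows from a short residue computation: the residue of $\nabla[e_i]$ is $\lambda_i [e_i]\tfrac{d\tilde z}{\tilde z}$, while the residue of $\nabla(\tilde z^m [f_i])$ equals $\bigl(m + (\lambda_i - m)\bigr)\tilde z^m [f_i]\tfrac{d\tilde z}{\tilde z} = \lambda_i \tilde z^m [f_i]\tfrac{d\tilde z}{\tilde z}$, matching under $\phi$; the tangential terms $B_i(0) dx_i$ are unaffected by multiplication by the pure $z$-power.

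The main obstacle is canonicity, i.e.\ independence of the chart. Under a change of coordinate $z \mapsto u z'$ with $u \in \cO_U^\times$, the section $f_i = z^{-m} e_i$ becomes $u^{-m} f_i'$ with $f_i' = z'^{-m} e_i$, so $[f_i] = \overline{u^{-m}_{\mid D_1}}[f_i']$, while $\tilde z = \overline{u_{\mid D_1}} \tilde z'$, giving $\tilde z^m [f_i] = \tilde z'^m [f_i']$; hence $\phi([e_i])$ is coordinate-independent. Intrinsically, $\phi$ is characterized as the unique $\nabla$-compatible $\cO$-linear isomorphism on $U_{ND_1}$ that, upon further localizing by $\tilde z$, is induced by the identity on the common Laurent expansion of $j_*\cE$ in which both $[e_i]$ and $\tilde z^m[f_i]$ represent the same section $e_i$. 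This intrinsic description ensures the local maps glue to a global canonical isomorphism on $U_{ND_1}$.
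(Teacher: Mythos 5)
Your construction breaks down at the claim that $f_i:=z^{-m_i}e_i$ is a local basis of $\ocE_{\tau'}$. Uniqueness of the $\tau'$-extension only identifies the span of the $f_i$ with $\ocE_{\tau'}$ \emph{provided} that span is already a logarithmic lattice with residue eigenvalues in $\myim{\tau_1'}$, and your computation $\nabla(z^ke_i)=z^k(\nabla e_i+ke_i\tfrac{dz}{z})$ only tracks the diagonal terms. In the frame $(f_i)$ the connection matrix acquires off-diagonal entries $z^{m_i-m_j}A_{ij}$, so you need $A_{ij}$ to vanish to order $m_j-m_i$ along $D_1$, whereas an adapted frame only guarantees vanishing to order $1$; the claim therefore fails as soon as two residue classes require shifts with $|m_i-m_j|\geq 2$. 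Concretely, on $\A^1$ with $D_1=\{z=0\}$ take $\ocE_\tau=\cO e_1\oplus\cO e_2$ with $\nabla=d+\tfrac1z\left(\begin{smallmatrix}0&z\\0&\sqrt2\end{smallmatrix}\right)dz$ and let $\tau'$ shift the class of $\sqrt2$ down by $2$. The actual $\tau'$-extension is $\cO e_1\oplus\cO\bigl(z^{-1}e_1+(\sqrt2-1)z^{-2}e_2\bigr)$, which does not contain $z^{-2}e_2$; so your class $[f_2]$ is not even an element of $\Sp_{\tau'}(\cE,\nabla)$, and the map $[e_i]\mapsto\tilde z^m[f_i]$ does not exist. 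Repairing this requires either iterating one-step elementary modifications and re-adapting the frame after each step, or first splitting $(\cE,\nabla)$ flatly near $D_1$ into $\alpha$-isotypic summands --- and either of these is essentially the hard content of the lemma, not something you may assume.

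The paper avoids explicit frames entirely. It first reduces to $\alpha=\alpha'$ by observing that the $(\tau_1+k,\tau_2,\dotsc,\tau_n)$-extension is $\ocE_\tau\otimes\cO(kD_1)$, then chooses a common minorant $\tau''$ with $\tau''\leq\tau,\tau'$ and $\alpha\in\myim{\tau_1''}$, and uses the sheaf-theoretic inclusions $\ocE_\tau\subset\ocE_{\tau''}$ and $\ocE_{\tau'}\subset\ocE_{\tau''}$ inside $j_*\cE$ to obtain induced maps of specializations $\phi^\alpha_{\tau,\tau''}$, $\phi^\alpha_{\tau',\tau''}$. That these inclusions hold, that the induced maps are isomorphisms on the $\alpha$-pieces over $U_{ND_1}$, and that they satisfy the cocycle condition making the composite independent of $\tau''$, are all verified by restricting to curves transverse to $D$ over $\Kbar$ and invoking Manin's classification of Fuchsian systems. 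If you want a proof along your lines, you should follow that route (or prove the local isotypic splitting) rather than the one-shot rescaling.
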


When $X$ is a curve and $K$ is algebraically closed, this Lemma is a direct consequence of Manin's classification of Fuchsian systems \cite{maninModuliFuchsiani1965}.
For a statement in english, see \cite[3.2.2]{andreRhamCohomologyDifferential2001}. The proof we give below is by reduction to Manin's result.

\begin{proof}
  From the definition of the specialization, one deduces that $\Sp_\tau(\cE,\nabla)_{\mid U_{ND_1}}$ is independent of the choice of $\tau_i$ for $i\geq 2$.
  Hence we restrict our attention to $\tau_1$ and $\tau_1'$. Note that by uniqueness, for any $k\in \Z$, the $(\tau_1+k,\tau_2,\dotsc,\tau_n)$-extension
  of $(\cE,\nabla)$ is $\ocE_\tau\otimes \cO(kD_1)$. Hence $\Sp_{(\tau_1+k,\tau_2,\dotsc,\tau_n)}(\cE,\nabla)\simeq \Sp_\tau(\cE,\nabla)\otimes \cO(kD_1)$
  and $\Sp_{(\tau_1+k,\tau_2,\dotsc,\tau_n)}(\cE,\nabla)_{\mid U_{ND_1}}\simeq \Sp_\tau(\cE,\nabla)_{\mid U_{ND_1}}$ canonically.
  So we may assume that $\alpha=\alpha'$.

  The free action of $\Z$ on $\Kbar/\Gal_K$ defines a partial order by $a\leq b$ if $b-a\in \N$. Choose $\tau''$ such that for all $i$
  $\tau_i''\leq \tau_i$ and
  $\tau_i''\leq \tau_i$, and $\alpha=\alpha'\in \myim{\tau_1''}$. We now construct isomorphisms
  $\ocE_{\tau'',\alpha\mid U_{ND_1}}\simeq \ocE_{\tau,\alpha\mid U_{ND_1}}$ and $\ocE_{\tau'',\alpha\mid U_{ND_1}}\simeq \ocE_{\tau',\alpha\mid U_{ND_1}}$
  such that the composition $\ocE_{\tau,\alpha\mid U_{ND_1}}\simeq \ocE_{\tau',\alpha'\mid U_{ND_1}}$ is independent of the choice of $\tau''$.
  Both $\ocE_\tau$ and $\ocE_{\tau''}$ are subsheaves of $j_*\cE$ where $j:X\setminus D\ra X$ is the inclusion.
  We claim that $\tau_1''\leq \tau_1$ implies $\ocE_\tau\subset \ocE_{\tau''}$. Indeed, this may be checked over $\overline{K}$ on curves mapping
  to $X$ transversely to $D$,
  where it is then a direct consequence of Manin's classification (see \cite[3.2.2]{andreRhamCohomologyDifferential2001}).
  This leads to a map $\Sp_\tau(\cE,\nabla)\ra\Sp_{\tau''}(\cE,\nabla)$ and hence a map
  $\phi^{\alpha}_{\tau,\tau''}:\ocE_{\tau,\alpha\mid U_{ND_1}}\ra \ocE_{\tau'',\alpha\mid U_{ND_1}}$. Again, this map can be checked to be an isomorphism
  over $\overline{K}$ on every curve over $X$ transverse to $D$ by Manin's classification. Thus $\phi^{\alpha}_{\tau,\tau''}$ is an isomorphism.
  To check that $(\phi^{\alpha}_{\tau',\tau''})^{-1}\circ \phi^{\alpha}_{\tau,\tau''}$ is independent of $\tau''$,
  we need only check that for any $\tau^{(1)},\tau^{(2)},\tau^{(3)}$
  such that $\tau^{(1)}_i\leq \tau^{(2)}_i\leq \tau^{(3)}_i$ for all $i$ and $\alpha\in \myim{\tau^{(1)}_1}\cap\myim{\tau^{(2)}_1}\cap\myim{\tau^{(3)}_1}$,
  $\phi^{\alpha}_{\tau^{(2)},\tau^{(1)}}\circ \phi^{\alpha}_{\tau^{(3)},\tau^{(2)}}=\phi^{\alpha}_{\tau^{(3)},\tau^{(1)}}$. Again, this can be checked
  over $\overline{K}$ on curves.
\end{proof}

%proof that the specialization is independent of tau

As a direct corollary of \Cref{lemmaspecialization}, we get that specialization is, over $U_{ND_1}$, independent of the choice of $\tau$.

\begin{propositiondefinition}
  Let $(\cE,\nabla)$ be a bundle with flat connection on $X\setminus D$, regular with respect to $(X,D)$.
  For any $\tau$, $\tau'$,
  there is a canonical isomorphism $\Sp_\tau(\cE,\nabla)_{\mid U_{ND_1}}\simeq\Sp_{\tau'}(\cE,\nabla)_{\mid U_{ND_1}}$.
  This defines a bundle with flat connection $\Sp(\cE,\nabla)$ on $U_{ND_1}$, regular with respect to $(ND_1,D_{ND_1})$,
  that we call the \emph{specialization} of $(\cE,\nabla)$ to $U_{ND_1}$.
\end{propositiondefinition}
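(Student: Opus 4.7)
The plan is to build the canonical isomorphism between $\Sp_\tau$ and $\Sp_{\tau'}$ on $U_{ND_1}$ summand-by-summand via the generalized eigenspace decomposition of \Cref{propositiondecomposition}, then promote this family of isomorphisms to a well-defined object $\Sp(\cE,\nabla)$ via a cocycle check, and finally verify regular singularity along $D_{ND_1}$.

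First, because each $\tau_i$ is a section of $\Kbar/\Gal_K \to (\Kbar/\Gal_K)/\Z$, the images $\myim{\tau_1}$ and $\myim{\tau_1'}$ are each complete systems of representatives for the free $\Z$-action on $\Kbar/\Gal_K$. Consequently there is a canonical bijection $\myim{\tau_1} \simeq \myim{\tau_1'}$ pairing each $\alpha$ with the unique $\alpha'$ in the same $\Z$-orbit. Applying \Cref{propositiondecomposition} to both $\tau$ and $\tau'$, and combining the canonical isomorphisms $\ocE_{\tau,\alpha\mid U_{ND_1}} \simeq \ocE_{\tau',\alpha'\mid U_{ND_1}}$ furnished by \Cref{lemmaspecialization} for each such pair, I obtain by direct sum a canonical isomorphism
\[
\Sp_\tau(\cE,\nabla)_{\mid U_{ND_1}} \simeq \Sp_{\tau'}(\cE,\nabla)_{\mid U_{ND_1}}
\]
which, since each summand-level isomorphism is constructed from inclusions inside $j_*\cE$ respecting the $\mathcal{D}$-module structure, intertwines the $\mathcal{D}_{ND_1,D_{ND_1}}$-actions and hence preserves the flat connections.

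Second, to promote this family of isomorphisms into a single object $\Sp(\cE,\nabla)$, I check the cocycle condition: for any three choices $\tau,\tau',\tau''$, the composition of the isomorphisms $\tau \to \tau'$ and $\tau' \to \tau''$ must equal the isomorphism $\tau \to \tau''$. Summand-by-summand, this reduces to the identity $\phi^{\alpha}_{\tau',\tau''} \circ \phi^{\alpha}_{\tau,\tau'} = \phi^{\alpha}_{\tau,\tau''}$, which is precisely the cocycle relation established at the end of the proof of \Cref{lemmaspecialization} by reduction to Manin's classification on transverse curves. The $\Sp_\tau(\cE,\nabla)_{\mid U_{ND_1}}$ therefore form a transitive system of flat bundles and canonical isomorphisms, and I define $\Sp(\cE,\nabla)$ as their common value under these identifications.

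Finally, regularity of $\Sp(\cE,\nabla)$ with respect to $(ND_1,D_{ND_1})$ is immediate from the construction: for any fixed $\tau$, the bundle $\Sp_\tau(\cE,\nabla)$ of \Cref{definitionsspecialization} extends $\Sp(\cE,\nabla)$ to all of $ND_1$ with an action of $\mathcal{D}_{ND_1,D_{ND_1}}$, i.e.\ a connection with logarithmic poles along $D_{ND_1}$, which by \Cref{regularsingular} is regular. I do not anticipate a real obstacle: all substantive work is concentrated in \Cref{lemmaspecialization}, and the present statement is essentially the bookkeeping required to assemble the $\tau$-dependent constructions into a single canonical object using the $\Z$- and $\Gal_K$-equivariance already built into the definition of $\tau$-extensions.
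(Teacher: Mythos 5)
Your proposal is correct and follows the paper's own route: the paper presents this statement as a direct corollary of \Cref{lemmaspecialization}, obtained exactly as you describe by summing the canonical summand-level isomorphisms over the decomposition of \Cref{propositiondecomposition}, with the cocycle compatibility already built into the proof of that lemma and regularity furnished by the logarithmic extension $\Sp_\tau(\cE,\nabla)$. You have merely made explicit the bookkeeping the paper leaves implicit.
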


\begin{remark}\label{remarkequivarianceappendix}
  Note that $U_{ND_1}$ is a principal $\mathbb{G}_m$-bundle over $D_1$ and that for any $(\cE,\nabla)$, $\Sp(\cE,\nabla)$ is by definition
  $\mathbb{G}_m$-equivariant. This justifies \Cref{remarkequivariance}.
\end{remark}

%-------------------------------------------------------------------------------------------------------------------------

\subsection{Behavior of regular singular connections with respect to root stacks}

%-------------------------------------------------------------------------------------------------------------------------

In the last part of this appendix, we prove the statements of \Cref{sectionBK} on the behavior of regularity and specialization
with respect to taking root stack. For the convenience of the reader, we repeat the statements of the $2$ results we prove here.

% statement of the 3 lemmas and proofs

\begin{lemma*}[{\ref{regularrootstack}}]
  Let $X$ be a geometrically connected smooth separated Deligne-Mumford stack over a field $K$ of characteristic $0$.
  Let $D=D_1\cup\dotsb\cup D_n\subset X$ be a normal crossing divisor with each $D_i$ geometrically connected, possibly with normal self-crossings,
  and $r_i$, $i=1,\dotsc,n$ positive integers.
  Let $X':=X[\sum_i\frac{D_i}{r_i}]$ be the root stack, $D'=\bigcup_i\frac{D_i}{r_i}$ the divisor and $f:X'\ra X$ the projection.
  Then a connection on a bundle over $X\setminus D=X'\setminus D'$ is regular for $(X,D)$ if and only if it is regular for $(X',D')$.
  Moreover, for $(\cE,\nabla)$ regular, if $\overline{\cE}$ is an extension to $X$,
  then $f^*\overline{\cE}$ is an extension to $X'$, and if $\overline{\cE}'$ is an extension to $X'$,
  then $f_*\overline{\cE}'$ is an extension to $X$.
\end{lemma*}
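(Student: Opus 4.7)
The plan is to verify both the characterization of regularity and the extension statements étale-locally on $X$, reducing to the standard local model: on a sufficiently small étale chart $U\to X$, we have coordinates $(z_1,\dotsc,z_n,x_1,\dotsc,x_d)$ such that each $D_i=\{z_i=0\}$ is smooth, and $X'\times_X U=[\Spec\cO_U[w_1,\dotsc,w_n]/(w_i^{r_i}-z_i)\,/\,\prod_i\mu_{r_i}]$ with $D'_i=\{w_i=0\}$. Normal self-crossings of the $D_i$ are resolved by passing to the étale cover given by the normalizations $\hat{D}_i\to D_i$. The map $f$ is finite and flat (a root stack along Cartier divisors), so $f_*$ is exact and the projection formula applies.

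The key computational input is the isomorphism $f^*\Omega^1_X(\log D)\simeq \Omega^1_{X'}(\log D')$: in local coordinates it sends $\frac{dz_i}{z_i}$ to $r_i\frac{dw_i}{w_i}$ and $dx_j$ to $dx_j$, and since each $r_i$ is a unit in $K$ (characteristic $0$), this is an isomorphism of free sheaves of the same rank.

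The forward extension statement is then almost formal. If $\ocE$ is a log extension of $(\cE,\nabla)$ on $(X,D)$, then $f^*\ocE$ is a vector bundle on $X'$ restricting to $\cE$ outside $D'$, and $f^*\nabla$ lands in $f^*\ocE\otimes f^*\Omega^1_X(\log D)\simeq f^*\ocE\otimes\Omega^1_{X'}(\log D')$, so $f^*\ocE$ is a log extension on $(X',D')$. In particular, this gives the implication that regularity on $(X,D)$ implies regularity on $(X',D')$.

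For the reverse statement, the main task is to check that $f_*\ocE'$ is a vector bundle on $X$ of the same rank as $\ocE'$. Étale-locally, a vector bundle on $X'$ corresponds to a $\prod_i\mu_{r_i}$-equivariant projective $B$-module $M$, with $B=\cO_X[w_1,\dotsc,w_n]/(w_i^{r_i}-z_i)$. Because we are in characteristic $0$, character decomposition gives $M=\bigoplus_{k\in\prod_i(\Z/r_i)}M_k$ as $\cO_X$-module. Multiplication by $w_i$ shifts the $i$-th index by one and composes $r_i$ times to multiplication by $z_i$, which is a unit at the generic point; hence all $M_k$ have the same generic $\cO_X$-rank. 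Projectivity of $M$ over $B$ implies that $M_0=f_*\ocE'$ is a direct summand of a free $\cO_X$-module, and hence locally free. The log connection on $f_*\ocE'$ is then produced by applying the exact functor $f_*$ to $\nabla'$ and invoking the projection formula $f_*(\ocE'\otimes f^*\Omega^1_X(\log D))\simeq f_*\ocE'\otimes\Omega^1_X(\log D)$; the Leibniz rule descends thanks to the isomorphism of log differentials. I expect the main technical point to be precisely the local-freeness of $f_*\ocE'$ and its compatibility with étale descent; the rest is formal manipulation with the isomorphism of log differentials and the projection formula.
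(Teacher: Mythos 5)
Your proposal is correct and follows essentially the same route as the paper: reduce étale-locally to the cyclic cover $\Spec\cO_U[w_i]/(w_i^{r_i}-z_i)$ with its $\prod_i\mu_{r_i}$-action, use the isomorphism of log differentials (sending $\frac{dz_i}{z_i}$ to $r_i\frac{dw_i}{w_i}$) for the pullback direction, and for the pushforward identify $f_*\ocE'$ with the invariant/trivial-isotypic summand of a module projective over $\cO_X$, which is locally free of the right rank and stable under the logarithmic derivations. The paper phrases the local-freeness via the averaging retraction $m\mapsto\frac{1}{|G|}\sum_g gm$ rather than the full character decomposition, but this is the same argument.
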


\begin{proof}[Proof of \Cref{regularrootstack}]
  Both statements about $f^*\ocE$ and $f_*\ocE'$ are étale local on $X$.
  Hence we are reduced to the situation where $X=\Spec A$ is an affine smooth $K$-scheme,
  and $D=\bigcup_iD_i$ with $D_i$ smooth and the zero locus of some $x_i\in A$.
  Then $\widetilde{X}:=\Spec A[y_i\:|\: i=1,\dotsc, d]/(y_i^{r_i}-x_i)$ is a ramified covering $p:\widetilde{X}\ra X$ with Galois group $G=\prod_i\mu_{r_i}$,
  and $\widetilde{X}/G$ is isomorphic to $X'=X[\sum_i\frac{D_i}{r_i}]$. Denote by $\widetilde{D}$ the union of the $V(y_i)$.
  The category of connections on $(X'=\widetilde{X}/G,D')$
  is equivalent to that of $G$-equivariant connections on $(\widetilde{X},\widetilde{D})$, and extensions correspond to equivariant extensions.

  Let $\ocE$ be an extension of $(\cE,\nabla)$ to $(X,D)$. Then, as $p^*\Omega^1(\log D)\simeq \Omega^1(\log \widetilde{D})$,
  $p^*\ocE$ is a $G$-equivariant extension of the pullback $(p^*\cE,p^*\nabla)$. As $p^*\ocE$ corresponds to $f^*\ocE$, the latter is an extension
  of $(\cE,\nabla)$ to $(X',D')$.

  Let $\ocE'$ be an extension of $(\cE,\nabla)$ to $(X',D')$. It corresponds to a $G$-equivariant extension $\ocE'_{\widetilde{X}}$ of $(q^*\cE,q^*\nabla)$,
  where $q:\widetilde{X}\ra X'$ is the quotient map.
  Let $\widetilde{A}=\Gamma(\widetilde{X},\cO_{\widetilde{X}})$ and $M=\Gamma(\widetilde{X},\ocE'_{\widetilde{X}})$.
  Then $f_*\ocE'$ is the quasi-coherent sheaf associated to the $A$-module $M^G$. Let us first show that $M^G$ is a projective module,
  whose rank over $A$ is that of $M$ over $\widetilde{A}$.
  We are in characteristic $0$, so the map $m\mapsto \frac{1}{|G|}\sum_ggm$ provides a retraction $M\ra M^G$ of $A$-modules.
  As $M$ is a projective $A$-module, so is $M^G$.
  Hence $f_*\ocE'$ is a bundle.
  Now, as $X'\setminus D'\ra X\setminus D$ is an isomorphism, $(f_*\ocE')_{\mid X\setminus D}$ has the same rank as $\ocE'$.

  Let us now prove that $f_*\ocE'$ is an extension of $(\cE,\nabla)$ to $(X,D)$. We need only show that for $m\in M^G$ and each $i=1,\dotsc,d$,
  $\nabla_{x_i\frac{\partial}{\partial x_i}}(m)\in (M^G)_{x_i}$ lies in $M^G$. However, as $\frac{dx_i}{x_i}=r_i\frac{dy_i}{y_i}$,
  $x_i\frac{\partial}{\partial x_i}=\frac{1}{r_i}y_i\frac{\partial}{\partial y_i}$ and $\nabla_{x_i\frac{\partial}{\partial x_i}}(m)$ equals
  $\frac{1}{r_i}\nabla_{y_i\frac{\partial}{\partial y_i}}(m)$, which is in $M$ by regularity of $p^*\nabla$, and is $G$-invariant. Hence the result.
\end{proof}

\begin{lemma*}[{\ref{specializationrootstack}}]
  Let $(X,D)$ and $(X',D')$ be as in \Cref{regularrootstack}. Let $\hat{D}_1$ be the normalization\footnote{As $D_1$ may have self-crossings, it is not necessarily smooth.}
  of $D_1$ and for $i\geq 1$, $\hat{D}_{1i}$ the pullback of $D_i\pitchfork D_1$ to $\hat{D}_1$.
  Similarly define $\hat{D}_1'$ and $\hat{D}_{1i}'$ for the pair $(X',D')$.
  Then specialization commutes with root stack, in the sense that the following diagram commutes:
  % https://q.uiver.app/#q=WzAsNCxbMCwwLCJcXFJTe1h9e0R9Il0sWzAsMSwiXFxSU3tYJ317RCd9Il0sWzIsMCwiXFxSU3tOXFxoYXR7RH1fMX17XFxoYXR7RH1fMVxcY3VwIE5cXGhhdHtEfV97MVxcbWlkXFxiaWdjdXBfaVxcaGF0e0R9X3sxaX19fSJdLFsyLDEsIlxcUlN7TlxcaGF0e0R9XzEnfXtcXGhhdHtEfV8xJ1xcY3VwIE5cXGhhdHtEfSdfezFcXG1pZFxcYmlnY3VwX2lcXGhhdHtEfV97MWl9J319Il0sWzAsMSwiIiwxLHsic3R5bGUiOnsidGFpbCI6eyJuYW1lIjoiYXJyb3doZWFkIn19fV0sWzAsMiwiXFx0ZXh0e3NwZWNpYWxpemF0aW9ufSJdLFsxLDMsIlxcdGV4dHtzcGVjaWFsaXphdGlvbn0iXSxbMiwzLCIiLDEseyJzdHlsZSI6eyJ0YWlsIjp7Im5hbWUiOiJhcnJvd2hlYWQifX19XV0=
  \[\begin{tikzcd}
    {\RS{X}{D}} && {\RS{N\hat{D}_1}{\hat{D}_1\cup N\hat{D}_{1\mid\bigcup_i\hat{D}_{1i}}}} \\
    {\RS{X'}{D'}} && {\RS{N\hat{D}_1'}{\hat{D}_1'\cup N\hat{D}'_{1\mid\bigcup_i\hat{D}_{1i}'}}}
    \arrow["{\text{specialization}}", from=1-1, to=1-3]
    \arrow[tail reversed, from=1-1, to=2-1]
    \arrow[tail reversed, from=1-3, to=2-3]
    \arrow["{\text{specialization}}", from=2-1, to=2-3]
  \end{tikzcd}\]
  where the vertical equivalences of categories are provided by \Cref{regularrootstack}.
\end{lemma*}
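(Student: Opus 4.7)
The plan is to work étale-locally on $X$, where I can adopt the explicit coordinate description already used in the proof of Lemma \ref{regularrootstack}. In that setting, $X = \Spec A$, the divisors $D_i$ are cut out by $x_i \in A$, the root stack is realized as $X' = \widetilde{X}/G$ with $\widetilde{X} = \Spec A[y_1,\dots,y_d]/(y_i^{r_i}-x_i)$ and $G = \prod_i \mu_{r_i}$, and the assertion becomes a statement about $G$-equivariant objects on $\widetilde{X}$. The key geometric observation I would first establish is that formation of the normal bundle commutes with root stacks: that is, $N\hat{D}_1'$ is canonically the root stack of $N\hat{D}_1$ along its zero section $\hat{D}_1$ (with multiplicity $r_1$) and along the pullbacks $N\hat{D}_{1i}$ of the remaining divisors (with multiplicity $r_i$). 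This is a direct local computation from the affine model.

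Next, I would choose a $\tau$-extension $\ocE_\tau$ of $(\cE,\nabla)$ on $(X,D)$, so that by Lemma \ref{regularrootstack} the pullback $f^*\ocE_\tau$ is an extension on $(X',D')$. The crucial point is identifying which $\tau'$-extension this is: in local coordinates, the connection has the form $\nabla = d + \frac{A}{x_1}dx_1 + \sum_{i\geq 2}B_i dx_i$ with $A$ having generalized eigenvalues in the image of $\tau_1$; substituting $x_1 = y_1^{r_1}$ and using $\frac{dx_1}{x_1} = r_1\frac{dy_1}{y_1}$ yields $\nabla = d + \frac{r_1 A}{y_1}dy_1 + \cdots$, so $f^*\ocE_\tau$ is a $\tau'$-extension for $\tau'_1$ the section obtained by scaling $\tau_1$ by $r_1$ (and $\tau'_i$ similarly for $i \geq 2$).

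Having matched extensions on the two sides, I would then compute the two specializations directly from Definition \ref{definitionsspecialization}. On $X$, $\Sp_\tau(\cE,\nabla)$ is $\ocE_\tau|_{\hat{D}_1}$ pulled up to $N\hat{D}_1$ with connection form $\frac{A|_{\hat{D}_1}}{t}dt + \sum_{i\geq 2} B_i|_{\hat{D}_1}dx_i$ where $t$ is the tautological normal coordinate; on $X'$ the analogous description in coordinate $t'$ (with $t = (t')^{r_1}$ under the root stack map on normal bundles) yields $\frac{r_1 A|_{\hat{D}_1}}{t'}dt' + \cdots$. These agree under pullback along the normal-bundle root stack map, and the identification of bundles $f^*\ocE_\tau|_{\hat{D}_1'} = f^*(\ocE_\tau|_{\hat{D}_1})$ is tautological. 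Applying Lemma \ref{regularrootstack} now to the pair $(N\hat{D}_1, \hat{D}_1 \cup N\hat{D}_{1\mid\bigcup_i\hat{D}_{1i}})$ and its root stack counterpart gives exactly the commutativity claimed.

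The main obstacle I expect is not the local calculation but the packaging: specifically, making sure the resulting identification is canonical (i.e., independent of $\tau$) and respects the equivalences of Lemma \ref{regularrootstack} at both horizontal positions in the diagram. For independence of $\tau$, I would invoke Proposition-Definition (the $\tau$-invariance of $\Sp$), which reduces the problem to checking compatibility for a single coherent choice of $\tau$ on $X$ and the induced $\tau'$ on $X'$. A secondary subtlety is that $\hat{D}_1'$ carries a $\mu_{r_1}$-gerbe structure over $\hat{D}_1$, but this is automatically handled by the $G$-equivariant interpretation of sheaves on $X'$ used throughout, so no extra argument is needed there.
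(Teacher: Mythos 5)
There is a genuine gap at the step where you identify $f^*\ocE_\tau$ as a $\tau'$-extension. Pulling back along $x_1=y_1^{r_1}$ multiplies the residue along $D_1$ by $r_1$, but ``scaling $\tau_1$ by $r_1$'' does not produce a section of $(\Kbar/\Gal_K)/\Z\ra \Kbar/\Gal_K$: two eigenvalues $\lambda,\lambda'\in\myim{\tau_1}$ may satisfy $\lambda-\lambda'\in\frac{1}{r_1}\Z\setminus\Z$ (perfectly allowed for a section), and then $r_1\lambda-r_1\lambda'$ is a nonzero integer, so $f^*\ocE_\tau$ is in general not a $\tau'$-extension for \emph{any} $\tau'$. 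This matters because, as \Cref{remarkspecialization} warns, the specialization computed from an arbitrary extension genuinely depends on that extension, and the $\tau$-independence established via \Cref{lemmaspecialization} only covers $\tau$-extensions; so you cannot read off the specialization on the $X'$-side from the connection matrix of $f^*\ocE_\tau$ without further argument. Concretely, for $r_1=2$ and residue eigenvalues $0$ and $1/2$ upstairs, the pulled-back extension has residue eigenvalues $0$ and $1$ --- the classical resonant situation in which the naive ``freeze the coefficients along the divisor'' operation can produce the wrong local system. To repair your route you would either have to pass from $f^*\ocE_\tau$ to an actual $\tau'$-extension (which differs from it by twisting generalized eigenspace summands by powers of $\cO(D_1')$) and track the effect on the associated graded, or prove separately that for the particular $\mu_{r_1}$-equivariant extensions arising as pullbacks the resonances are never activated (the perturbation terms only occur in degrees $\equiv r_1-1 \bmod r_1$). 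Neither is done, and the second point is exactly the kind of claim that needs a proof rather than the word ``tautological''.

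This is precisely why the paper runs the argument in the opposite direction: it starts from a $\tau'$-extension $\ocE'_{\tau'}$ on $X'$ and shows, by the explicit computation with the characters $\chi_{\ua}$ of $G=\prod_i\mu_{r_i}$, that $f_*\ocE'_{\tau'}$ \emph{is} a $\tau$-extension, with $\myim{\tau_i}=\frac{1}{r_i}\left(\myim{\tau_i'}+\{0,1,\dotsc,r_i-1\}\right)$; division by $r_i$ together with the shifts $b\in\{0,\dotsc,r_i-1\}$ spreads eigenvalues apart modulo $\Z$ rather than collapsing them. It then compares $\bar f_*\Sp_{D_1',\tau'}(\cE,\nabla)$ with $\Sp_{D_1,\tau}(\cE,\nabla)$ blockwise along the resulting character decomposition. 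Your preliminary observations (normal bundles commute with root stacks; the gerbe on $\hat{D}_1'$ is absorbed by $G$-equivariance) do agree with the paper's, but the core of the argument must go through pushforward, not pullback, of extensions.
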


\begin{proof}[Proof of \Cref{specializationrootstack}]
  First note that the normal bundle in the root stack is the root stack of the normal bundle.
  We will prove that push-forward along root stacks commutes with specialization maps.
  As a first step, let us prove that if $\ocE'_{\tau'}$ is the $\tau'$-extension of $(\cE,\nabla)$ to $X'$,
  then $f_*\ocE'_{\tau'}$ is the $\tau$-extension of $(\cE,\nabla)$ to $X$ for some $\tau$.
  To do so, we need only look at eigenvalues of residues locally. Indeed, an extension is a $\tau$-extension for some $\tau$
  if and only if on each connected component of $D$, the eigenvalues of the residue of the connection do not differ by non-zero integers.
  Let us place ourselves in the situation of the proof of \Cref{regularrootstack},
  and use the same notations.
  The group $G=\prod_i\mu_{r_i}$ has characters $\chi_{\ua}$ indexed by $\ua=(a_1,\dotsc,a_d)$ with $a_i\in\{0,\dotsc,r_i-1\}$,
  such that $\chi_{\mathrm{1}_j}:\prod_i\mu_{r_i}\ra \mu_{r_j}\subset \Kbar^\times$ is the projection on $\mu_{r_j}$.
  Now, after shrinking $U$, the module $M$ decomposes as $\bigoplus_{\ua}\chi_{\ua}\otimes \widetilde{A}^{\oplus n_{\ua}}$ for the action of $G$.
  As the connection on $M$ is $G$ equivariant, it is compatible with this decomposition and is thus of the form:
  \begin{equation*}
    d+\sum_{i}\sum_{b} B_{i,b}\frac{dy_i}{y_i} + \Omega\text{ with }B_{i,b}\in \End{\bigoplus_{\ua\mid a_i=b}\widetilde{A}^{\oplus n_{\ua}}}
    \text{ and }\Omega\text{ regular.}
  \end{equation*}
  A computation yields that $M^G$ is then $\bigoplus_{\ua}(\prod_iy_i^{a_i}) A^{\oplus n_{\ua}}$ with connection:
  \begin{equation*}
    d+\sum_{i}\sum_{b} \frac{b+B_{i,b}}{r_i}\frac{dx_i}{x_i} + \Omega'\text{ with }\Omega'\text{ regular.}
  \end{equation*}
  Now by another small computation, for each $i$, as the eigenvalues of $\bigoplus_b B_{i,b}$ do not differ by non-zero integers, and $0\leq b\leq r_i-1$,
  neither do the eigenvalues of $\bigoplus_b\frac{b+B_{i,b}}{r_i}$. Hence $M^G$ is a Deligne $\tau$-extension for some $\tau$.
  More precisely, $\tau$ is determined by $\myim{\tau_i}=\frac{1}{r_i}\left(\myim{\tau_i'}+\{0,1,\dotsc,r_i-1\}\right)$.
  In particular, $\myim{\tau_1}=\sqcup_{0\leq b\leq r_1-1}\Lambda_b$ with $\Lambda_b=\frac{\myim{\tau_1'}}{r_1}+\frac{b}{r_1}$.
  This allows us to apply \Cref{propositiondecomposition} to get a decomposition:
  \begin{equation*}
    \Sp_{D_1,\tau}(\cE,\nabla)=\bigoplus_{0\leq b\leq r_1-1} \Sp_{D_1,\tau}(\cE,\nabla)_b.
  \end{equation*}

  The second step is to define a natural isomorphism $\bar{f}_*\Sp_{D_1',\tau'}(\cE,\nabla)\simeq \Sp_{D_1,\tau}(\cE,\nabla)$,
  where $\bar{f}:N\hat{D}_1'\ra N\hat{D}_1$.
  To do so, denote by $\cI$ the ideal defining $D_1$ in $X$ and by $\cI'$ the ideal defining $D_1'$ in $X'$.
  We may work étale locally and then glue the isomorphisms. Hence we may assume that $D_1$ is smooth.
  As $D_1'$ is a $\mu_{r_1}$-gerbe over $D_1$, each fiber of $\ocE'_{\tau'}/\cI'\ocE'_{\tau'}$
  has an action of $\mu_{r_1}$. Hence we get a decomposition of $\ocE'_{\tau'}/\cI'\ocE'_{\tau'}$ according to characters for this action.
  Multiplying the decomposition by $\cI'^k$ for all $k\geq 0$, we get a decomposition compatible with the connection:
  \begin{equation*}
    \Sp_{D_1',\tau'}(\cE,\nabla)=\bigoplus_{0\leq b\leq r_1-1} \Sp_{D_1',\tau'}(\cE,\nabla)_b.
  \end{equation*}
  We now describe natural isomorphisms $\bar{f}_*\left(\Sp_{D_1',\tau'}(\cE,\nabla)_b\right)\simeq \Sp_{D_1,\tau}(\cE,\nabla)_b$.
  By flatness of $f$, we have an isomorphism:
  \begin{align*}
    \Sp_{D_1,\tau}(\cE,\nabla)=\bigoplus_{k\geq 0}\cI^kf_*\ocE'_{\tau'}/\cI^{k+1}f_*\ocE'_{\tau'}&\simeq f_{1*}\left(\bigoplus_{k\geq 0}f^*(\cI)^k\ocE'_{\tau'}/f^*(\cI)^{k+1}\ocE'_{\tau'}\right)\\
                                                                      &=f_{1*}\left(\bigoplus_{k\geq 0}\cI'^{rk}\ocE'_{\tau'}/\cI'^{rk+r}\ocE'_{\tau'}\right)
  \end{align*}
  where $f_1:X'\times_X D_1\ra D_1$ is the projection.
  Consider the natural maps:
  % https://q.uiver.app/#q=WzAsNCxbMCwwLCJcXFNwX3tEXzEsXFx0YXV9KFxcY0UsXFxuYWJsYSlfYiJdLFsxLDAsImZfezEqfVxcbGVmdChcXGJpZ29wbHVzX3trXFxnZXEgMH1cXGNJJ157cmsrYn1cXG9jRSdfe1xcdGF1J30vXFxjSSdee3JrK3J9XFxvY0UnX3tcXHRhdSd9XFxyaWdodClcXGNhcCBcXFNwX3tEXzEsXFx0YXV9KFxcY0UsXFxuYWJsYSlfYiJdLFsxLDEsImZfezEqfVxcbGVmdChcXGxlZnQoXFxiaWdvcGx1c197a1xcZ2VxIDB9XFxjSSdee3JrK2J9XFxvY0UnX3tcXHRhdSd9L1xcY0knXntyaytiKzF9XFxvY0UnX3tcXHRhdSd9XFxyaWdodClcXGNhcCBcXFNwX3tEXzEnLFxcdGF1J30oXFxjRSxcXG5hYmxhKV9iXFxyaWdodCkuIl0sWzAsMSwiXFxiYXJ7Zn1fKlxcbGVmdChcXFNwX3tEXzEnLFxcdGF1J30oXFxjRSxcXG5hYmxhKV9iXFxyaWdodCkiXSxbMSwwXSxbMSwyXSxbMiwzXV0=
  \[\begin{tikzcd}
    {\Sp_{D_1,\tau}(\cE,\nabla)_b} & {f_{1*}\left(\bigoplus_{k\geq 0}\cI'^{rk+b}\ocE'_{\tau'}/\cI'^{rk+r}\ocE'_{\tau'}\right)\cap \Sp_{D_1,\tau}(\cE,\nabla)_b} \\
    {\bar{f}_*\left(\Sp_{D_1',\tau'}(\cE,\nabla)_b\right)} & {f_{1*}\left(\left(\bigoplus_{k\geq 0}\cI'^{rk+b}\ocE'_{\tau'}/\cI'^{rk+b+1}\ocE'_{\tau'}\right)\cap \Sp_{D_1',\tau'}(\cE,\nabla)_b\right).}
    \arrow[from=1-2, to=1-1]
    \arrow[from=1-2, to=2-2]
    \arrow[from=2-2, to=2-1]
  \end{tikzcd}\]
  One can check locally in coordinates as in the first part of this proof that all $3$ maps in the diagram above are isomorphisms.
  Hence the desired isomorphism $\bar{f}_*\left(\Sp_{D_1',\tau'}(\cE,\nabla)_b\right)\simeq \Sp_{D_1,\tau}(\cE,\nabla)_b$
  and summing over $b$, the sought isomorphism $\bar{f}_*(\Sp_{D_1',\tau'}(\cE,\nabla))\simeq \Sp_{D_1,\tau}(\cE,\nabla)$.
\end{proof}

%-------------------------------------------------------------------------------------------------------------------------

%-------------------------------------------------------------------------------------------------------------------------

\bibliographystyle{plain}
\bibliography{biblio}

\end{document}